\numberwithin{equation}{section}
\newtheorem{lemma}{Lemma}[section]
\newtheorem{theorem}{Theorem}[section]
\newtheorem{proposition}{Proposition}[section]
\newtheorem{definition}{Definition}[section]
\newtheorem{remark}{Remark}[section]
\newenvironment{proof}{\smallskip\noindent{\bf Proof.}\rm}{\hspace*{\fill} $\Box$\medskip}
\newenvironment{proofsketch}{\smallskip\noindent{\bf Sketch of the proof.}\rm}{\hspace*{\fill} $\Box$\medskip}
\newenvironment{prooflemma34}{\smallskip\noindent{\bf Proof of Lemma \ref{Lemma5}.}\rm}{\hspace*{\fill} $\Box$\medskip}
\newenvironment{proofth12}{\smallskip\noindent{\bf Proof of Theorem \ref{Th2}.}\rm}{\hspace*{\fill} $\Box$\medskip}
\newenvironment{proofth14}{\smallskip\noindent{\bf Proof of Theorem \ref{Th4}.}\rm}{\hspace*{\fill} $\Box$\medskip}
\renewcommand{\Re}{\operatorname{Re}}
\renewcommand{\Im}{\operatorname{Im}}
\newcommand{\res}{\operatorname{res}}
\newcommand{\rank}{\operatorname{rank}}
\newcommand{\bq}{\mathbf{q}}
\newcommand{\Ran}{\operatorname{Ran}}
\newcommand{\tH}{\widetilde H}
\newcommand{\tR}{\widetilde R}
\title{Inverse spectral problems for Dirac operators\\ on a finite interval}
\author{
Ya.~V.~Mykytyuk, D.~V.~Puyda\thanks{\emph{Email addresses:} yamykytyuk@yahoo.com (Ya.~V.~Mykytyuk), dpuyda@gmail.com (D.~V.~Puyda)}\\
\emph{Ivan Franko National University of Lviv}\\
\emph{1 Universytetska str., Lviv, 79000, Ukraine}
}
\date{}
\begin{document}

\maketitle

\begin{abstract}
We consider the direct and inverse spectral problems for Dirac
operators generated by the differential expressions
$$
\mathfrak
t_q:=\frac{1}{i}\begin{pmatrix}I&0\\0&-I\end{pmatrix}\frac{d}{dx}+
\begin{pmatrix}0&q\\q^*&0\end{pmatrix}
$$
and some separated boundary conditions. Here $q$ is an $r\times r$
matrix-valued function with entries belonging to
$L_2((0,1),\mathbb C)$ and $I$ is the identity $r\times r$ matrix.
We give a complete description of the spectral data (eigenvalues
and suitably introduced norming matrices) for the operators under
consideration and suggest a method for reconstructing the
potential $q$ from the corresponding spectral data.
\end{abstract}

\section{Introduction} \label{}
Direct and inverse spectral problems for Dirac and
Sturm--Liouville operators are the objects of interest in plenty
of papers. In 1966, M. Gasymov and B. Levitan solved the inverse
spectral problem for Dirac operators on a half-line by using the
spectral function \cite{GasLev1} and the scattering phase
\cite{GasLev2}. Their investigations were continued and further
developed in many directions.

By now, the direct and inverse spectral problems for Dirac
operators with potentials from different classes have been solved.
For instance, the Dirac operators on a finite interval with
continuous potentials were considered in \cite{GasDzab},
\cite{Malamud2} (reconstructing from two spectra), the ones on a
half-line were treated in \cite{Malamud} (complete description of
the spectral measures and the reconstruction procedure). The case
of potentials belonging to $L_p(0,1)$, $p\ge1$, was considered in
\cite{MykDirac} (reconstructing from two spectra and from one
spectrum and the norming constants based on the Krein equation).

The Weyl--Titchmarsh $m$-functions were used in \cite{ClarkGesz},
\cite{KisMakGesz} to recover the Dirac operators acting in
$L_2(\mathbb R_+,\mathbb C^{2r})$. More general canonical systems
on $\mathbb R$ were considered in \cite{Sakh1}, \cite{Sakh2}. The
matrix-valued Weyl--Titchmarsh functions were recently used in
\cite{Korot} for the characterization of vector-valued
Sturm--Liouville operators on the unit interval.

There are many other interesting papers concerning the direct and
inverse spectral problems for Dirac and Sturm--Liouville operators
besides those mentioned here. We refer the reader to the extensive
bibliography cited in \cite{Malamud2}--\cite{sturm} for further
results on that subject.

The aim of the present paper is to extend the results of the
recent paper \cite{sturm} by Ya. Mykytyuk and N. Trush concerning
the inverse spectral problems for Sturm--Liouville operators with
matrix-valued potentials to the case of Dirac operators on a
finite interval with square-summable potentials.

\subsection{Setting of the problem} \label{}
Let $M_r$ denote the Banach algebra of $r\times r$ matrices with
complex entries, which we identify with the Banach algebra of
linear operators $\mathbb C^r\to\mathbb C^r$ endowed with the
standard norm. We write $I=I_r$ for the unit element of $M_r$ and
$M_r^+$ for the set of all matrices $A\in M_r$ such that
$A=A^*\ge0$. Also we use the notations
$$
\mathbb H:=L_2((0,1),\mathbb C^r)\times L_2((0,1),\mathbb
C^r),\quad \mathfrak Q:=L_2((0,1),M_r).
$$

Let $q\in\mathfrak Q$. Denote
\begin{equation}\label{tqdef}
\vartheta:=\frac{1}{i}\begin{pmatrix}I&0\\0&-I\end{pmatrix},\quad
\bq:=\begin{pmatrix}0&q\\q^*&0\end{pmatrix}
\end{equation}
and consider the differential expression
\begin{equation}\label{DiffExpr}
\mathfrak t_q:=\vartheta\frac{d}{dx}+\bq
\end{equation}
on the domain $ D(\mathfrak t_q)=\{y=(y_1, \ y_2)^\top \ | \
y_1,y_2\in W_2^1((0,1),\mathbb C^r) \}$, where $W_2^1$ is the
Sobolev space. The object of our investigation is a self-adjoint
Dirac operator $T_q$ that is generated by the differential
expression (\ref{DiffExpr}) and the separated boundary conditions
$y_1(0)=y_2(0)$, $y_1(1)=y_2(1)$:
$$
T_qy=\mathfrak t_q(y), \quad D(T_q)=\{y\in D(\mathfrak t_q) \ | \
y_1(0)=y_2(0), \ y_1(1)=y_2(1)\}.
$$
The function $q\in\mathfrak Q$ will be conventionally called the
\emph{potential} of $T_q$.

The spectrum $\sigma(T_q)$ of the operator $T_q$ consists of
countably many isolated real eigenvalues of finite multiplicity,
accumulating only at $+\infty$ and $-\infty$. We denote by
$\lambda_j(q)$, $j\in\mathbb Z$, the pairwise distinct eigenvalues
of the operator $T_q$ labeled in increasing order so that
$\lambda_0(q)\le0<\lambda_1(q)$:
$$
\sigma(T_q)=\{\lambda_j(q)\}_{j\in\mathbb Z}.
$$

Denote by $m_q$ the Weyl--Titchmarsh function of the operator
$T_q$ that is defined as in \cite{ClarkGesz}. The function $m_q$
is a matrix-valued meromorphic Herglotz function (i.e. $\Im
m_q(\lambda)\ge0$ whenever $\Im\lambda>0$), and
$\{\lambda_j(q)\}_{j\in\mathbb Z}$ is the set of its poles. We put
$$
\alpha_j(q):=-\underset{\lambda=\lambda_j(q)}\res
m_q(\lambda),\quad j\in\mathbb Z,
$$
and call $\alpha_j(q)$ the \emph{norming matrix} of the operator
$T_q$ corresponding to the eigenvalue $\lambda_j(q)$. Note that
the multiplicity of the eigenvalue $\lambda_j(q)$ of $T_q$ equals
$\rank\alpha_j(q)$ and that $\alpha_j(q)\ge0$ for all $j\in\mathbb
Z$.

We call the collection $\mathfrak
a_q:=((\lambda_j(q),\alpha_j(q)))_{j\in\mathbb Z}$ the
\emph{spectral data} of the operator $T_q$, and the matrix-valued
measure
$$
\mu_q:=\sum\limits_{j=-\infty}^{\infty}\alpha_j(q)\delta_{\lambda_j(q)}
$$
is called its \emph{spectral measure}. Here $\delta_\lambda$ is
the Dirac delta-measure centered at the point $\lambda$. In
particular, if $q\equiv0$ then
$$
\mu_0=\sum\limits_{n=-\infty}^{\infty}I\delta_{\pi n}.
$$

The aim is to give a complete description of the class $\mathfrak
A:=\{\mathfrak a_q \ | \ q\in\mathfrak Q\}$ of spectral data for
Dirac operators under consideration, which is equivalent to the
description of the class $\mathfrak M:=\{\mu_q \ | \ q\in\mathfrak
Q\}$ of spectral measures, and to suggest an efficient method of
reconstructing the potential $q$ from the corresponding spectral
data $\mathfrak a_q$.

\subsection{{Main results}\label{subsec.12}}
We start from the description of spectral data for operators under
consideration. In what follows $\mathfrak a$ will stand for an
arbitrary sequence $((\lambda_j,\alpha_j))_{j\in\mathbb Z}$, in
which $(\lambda_j)_{j\in\mathbb Z}$ is a strictly increasing
sequence of real numbers such that $\lambda_0\le0<\lambda_1$ and
$\alpha_j$ are non-zero matrices in $M_r^+$. By $\mu^{\mathfrak
a}$ we denote the matrix-valued measure given by
\begin{equation}\label{MuAdef}
\mu^{\mathfrak a}:=\sum\limits_{j=-\infty}^\infty
\alpha_j\delta_{\lambda_j}.
\end{equation}

We partition the real axis into pairwise disjoint intervals
$\Delta_n$, $n\in\mathbb Z$:
$$
\Delta_n:=\left(\pi n-\tfrac{\pi}{2},\pi n+\tfrac{\pi}{2}\right],\
n \in \mathbb Z.
$$
A complete description of the class $\mathfrak A$ is given by the
following theorem.

\begin{theorem}\label{Th1}
In order that a sequence $\mathfrak
a=((\lambda_j,\alpha_j))_{j\in\mathbb Z}$ should belong to
$\mathfrak A$ it is necessary and sufficient that the following
conditions are satisfied:
\begin{itemize}
\item[$(A_1)$]$\sup\limits_{n\in \mathbb
Z}\sum\limits_{\lambda_j\in\Delta_n}1<\infty$, \
$\sum\limits_{n\in \mathbb
Z}\sum\limits_{\lambda_j\in\Delta_n}|\lambda_j-\pi n|^2<\infty$, \
$\sum\limits_{n\in\mathbb
Z}\|I-\sum\limits_{\lambda_k\in\Delta_n}\alpha_k\|^2<\infty$;

\item[$(A_2)$]$\exists N_0\in\mathbb N \ \ \forall
N\in \mathbb N:\ (N\ge N_0)\Rightarrow
\sum\limits_{n=-N}^{N}\sum\limits_{\lambda_j\in\Delta_n}\rank
\alpha_j=(2N+1)r$;

\item[$(A_3)$]the system of functions $\{ de^{i\lambda_jt}\ | \ j\in\mathbb{Z},\
d\in\mathrm{Ran}\ \alpha_j \}$ is complete in $L_2((-1,1),\mathbb
C^r)$.
\end{itemize}
\end{theorem}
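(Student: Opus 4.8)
The plan is to prove necessity and sufficiency separately, using the transformation operator for $T_q$ as the principal tool and reading every hypothesis as the assertion that $\mathfrak a$ is an appropriate $\ell_2$-perturbation of the free spectral data, subject to one solvability condition. For the free operator $T_0$ the matrix solution of $\mathfrak t_0 u=\lambda u$ with $u_1(0)=u_2(0)=I$ is $u_0(x,\lambda)=(e^{i\lambda x}I,\ e^{-i\lambda x}I)^\top$, and the second boundary condition forces $\det\bigl(e^{i\lambda}I-e^{-i\lambda}I\bigr)=0$, i.e. $\lambda=\pi n$ with each eigenvalue of multiplicity $r$ and norming matrix $I$; this matches $\mu_0=\sum_n I\delta_{\pi n}$. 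A useful observation, exact in the free case, is the \emph{unfolding}: an eigenfunction $(y_1,y_2)$ on $(0,1)$ is mapped to $\psi(t):=y_1(t)$ for $t\in(0,1)$ and $\psi(t):=y_2(-t)$ for $t\in(-1,0)$, which for the free solution gives $\psi(t)=d\,e^{i\lambda t}$ on $(-1,1)$. Thus completeness of eigenfunctions in $\mathbb H$ is equivalent to completeness of a system of $\mathbb C^r$-valued exponentials on $(-1,1)$, which is exactly the shape of $(A_3)$.

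For necessity I would first establish a triangular representation $u(x,\lambda)=u_0(x,\lambda)+\int_0^x K(x,t)u_0(t,\lambda)\,dt$ with a Hilbert--Schmidt matrix kernel $K$ determined by $q\in\mathfrak Q$, so that the characteristic determinant $\Delta_q(\lambda)$ of $T_q$ is a controlled (of $\ell_2$-type) perturbation of $\Delta_0(\lambda)=\det\bigl(e^{i\lambda}I-e^{-i\lambda}I\bigr)$. From the resulting eigenvalue asymptotics one reads off the first two parts of $(A_1)$, and from the residue formula $\alpha_j(q)=-\operatorname{res}_{\lambda=\lambda_j(q)}m_q$ together with $\alpha_j(q)\ge0$ and $\operatorname{rank}\alpha_j(q)=\dim\ker(T_q-\lambda_j)$ one obtains the third part of $(A_1)$. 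Condition $(A_2)$ follows from a counting/index argument comparing the number of zeros of $\Delta_q$ (with multiplicity) in the window $\bigcup_{|n|\le N}\Delta_n$ with the $(2N+1)r$ zeros of $\Delta_0$ there. Finally $(A_3)$ is the transcription, through the boundedly invertible transformation operator $I+K$, of the completeness of the eigenfunctions of the self-adjoint $T_q$ in $\mathbb H$: unfolding as above turns $u_0(\cdot,\lambda_j(q))d$ into $d\,e^{i\lambda_j t}$ on $(-1,1)$, with $d$ ranging over $\operatorname{Ran}\alpha_j$ because only those directions are carried by the eigenspace.

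For sufficiency I would run the inverse (Gelfand--Levitan--Marchenko / Krein) procedure. Given $\mathfrak a$ satisfying $(A_1)$--$(A_2)$, form $\mu^{\mathfrak a}$ and subtract the free measure $\mu_0=\sum_n I\delta_{\pi n}$; the first two parts of $(A_1)$ control the displacement of the nodes and the third controls $\sum_n\|I-\sum_{\lambda_k\in\Delta_n}\alpha_k\|^2$, so that the \emph{accelerant} (the matrix Fourier transform of $\mu^{\mathfrak a}-\mu_0$) lies in the correct $L_2$ class on $(-1,1)$. One then sets up the Krein equation $(I+\mathcal F_{\mathfrak a})K=g$, where $\mathcal F_{\mathfrak a}$ is the integral operator with that accelerant as kernel and $g$ is the free data, solves it for the triangular kernel $K$, and defines $q$ from the boundary values of $K$ exactly as in the necessity construction. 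It then remains to verify that $q\in\mathfrak Q$ and that $\mathfrak a_q=\mathfrak a$, which follows from the standard Marchenko-type identities once $K$ is in hand.

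The hard part is the unique solvability of the Krein equation, and this is precisely where $(A_3)$ enters. The operator $I+\mathcal F_{\mathfrak a}$ is non-negative, and its injectivity---hence, by the Hilbert--Schmidt structure, its bounded invertibility---is equivalent to the completeness of the system $\{d\,e^{i\lambda_j t}\mid d\in\operatorname{Ran}\alpha_j\}$ in $L_2((-1,1),\mathbb C^r)$, i.e. to $\ker(I+\mathcal F_{\mathfrak a})=\{0\}$. Converting ``completeness of the exponential system'' into this triviality-of-kernel statement, while simultaneously handling the non-commutative matrix bookkeeping enforced by the rank identity $(A_2)$, is the technical crux of the whole argument. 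Once invertibility is secured, the regularity $q\in L_2$ and the identity $\mathfrak a_q=\mathfrak a$ close both directions.
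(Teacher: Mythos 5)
Your overall architecture matches the paper's: transformation operators and sine-type zero asymptotics for the necessity of $(A_1)$--$(A_2)$, the unfolding of $(y_1,y_2)$ into an exponential system on $(-1,1)$ for $(A_3)$, and the Krein accelerant/GLM machinery for sufficiency, with $(A_3)$ securing positivity of the relevant operator. However, two steps are genuinely missing. First, the third estimate in $(A_1)$, namely $\sum_{n}\|I-\sum_{\lambda_k\in\Delta_n}\alpha_k\|^2<\infty$, does not follow from ``the residue formula together with $\alpha_j\ge0$ and $\rank\alpha_j=\dim\ker(T_q-\lambda_j)$'': positivity and the rank identity say nothing quantitative about how close the grouped residues are to $I$. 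One needs the asymptotics $m_q(\lambda)+\cot\lambda\,I=o(1)$ away from the spectrum together with a contour-integral evaluation of $\sum_{\lambda_k\in\Delta_n}\alpha_k$ (this is the content the paper delegates to the Mykytyuk--Trush argument); your sketch contains no mechanism that produces the square-summability.

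Second, and more seriously, you attach $(A_2)$ to the solvability of the Krein equation and then dismiss the verification $\mathfrak a_q=\mathfrak a$ as ``standard Marchenko-type identities.'' This misplaces the crux. Unique solvability of the Krein equation requires only $(A_1)$ and $(A_3)$: non-negativity plus injectivity of $\mathscr I+\mathscr F_H$ gives positivity and hence factorizability. Where $(A_2)$ is indispensable is \emph{after} the construction of $q$: the factorization yields $\sum_j\Phi_q(\lambda_j)\alpha_j\Phi_q^*(\lambda_j)=\mathscr I$ with non-negative summands, but a resolution of the identity into non-negative operators need not consist of the spectral projectors of $T_q$, nor even of pairwise orthogonal projectors. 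The paper must invoke the rank-counting condition $(A_2)$ through the operator-theoretic Lemmas \ref{Lemma4}, \ref{LemmaPr1} and \ref{LemmaPr2} to prove that the candidates $\hat P_j$ form a complete orthogonal family (Lemma \ref{Lemma5}), and then a separate limiting argument with $(-1)^n\sum_{\lambda_k\in\Delta_n}\varphi_q(1,\lambda_k)\alpha_k\to\vartheta a^*$ to conclude $a\varphi_q(1,\lambda_j)\alpha_j=0$, i.e.\ $\Ran\hat P_j\subset\ker(T_q-\lambda_j\mathscr I)$. Without this block the sufficiency direction is incomplete: a collection satisfying only $(A_1)$ and $(A_3)$ still produces some $q$, but one cannot conclude that its spectral data coincide with $\mathfrak a$.
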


By definition, every $\mathfrak a\in\mathfrak A$ forms the
spectral data for Dirac operator $T_q$ with some $q\in\mathfrak
Q$. It turns out that this spectral data determine the potential
$q$ uniquely:

\begin{theorem}\label{Th2}
The mapping $\mathfrak Q\owns q\mapsto\mathfrak a=\mathfrak
a_q\in\mathfrak A$ is bijective.
\end{theorem}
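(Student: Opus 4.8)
Since the class $\mathfrak A$ is \emph{defined} as the image $\{\mathfrak a_q\mid q\in\mathfrak Q\}$ of the map $q\mapsto\mathfrak a_q$, surjectivity onto $\mathfrak A$ holds by construction (as the authors note just before the statement), so the entire content of Theorem~\ref{Th2} is \emph{injectivity}: the plan is to show that $\mathfrak a_{q_1}=\mathfrak a_{q_2}$ forces $q_1=q_2$. The first step is to translate the coincidence of spectral data into the coincidence of analytic objects that are known to determine the potential. Because $\mathfrak a_q$ is precisely the list of poles $\lambda_j(q)$ of the Weyl--Titchmarsh function $m_q$ together with the residue matrices $\alpha_j(q)=-\res_{\lambda=\lambda_j(q)}m_q(\lambda)$, equal spectral data give equal principal parts of $m_{q_1}$ and $m_{q_2}$ at each pole, and in particular the spectral measures coincide, $\mu_{q_1}=\mu_{q_2}$.

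The route I would actually carry out -- because it simultaneously yields the reconstruction algorithm promised in the abstract -- is through transformation operators and a Gelfand--Levitan--Marchenko (GLM) equation. For $q\equiv 0$ the eigensolutions of $\mathfrak t_0$ are built from $e^{\pm i\lambda x}$ and the data is $\mu_0=\sum_n I\,\delta_{\pi n}$; for general $q$ one disposes of a transformation operator $I+\mathcal K_q$ with a matrix Volterra kernel $K_q$ carrying the free vector solutions to the solutions $\varphi(\cdot,\lambda)$ of $\mathfrak t_q\varphi=\lambda\varphi$ that satisfy the boundary condition at $x=0$. I would form the kernel $F_q$ as a Fourier-type transform of the measure difference $\mu_q-\mu_0$ and write the main equation $F_q(x,t)+K_q(x,t)+\int_0^x K_q(x,s)F_q(s,t)\,ds=0$ for $K_q$; the potential $q$ is then read off from the off-diagonal block of $K_q$ near the diagonal. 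Since $\mu_{q_1}=\mu_{q_2}$ gives $F_{q_1}=F_{q_2}$, unique solvability of the main equation forces $K_{q_1}=K_{q_2}$ and hence $q_1=q_2$.

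An alternative, more analytic route proceeds directly through the Weyl function. A meromorphic Herglotz function is determined, up to an additive constant (and possibly a linear term), by its poles and residues: the associated Mittag--Leffler/Nevanlinna series converges by virtue of $(A_1)$, and the normalization is fixed by the universal large-$|\lambda|$ asymptotics of $m_q$ shared by all $q\in\mathfrak Q$. Thus $\mu_{q_1}=\mu_{q_2}$ would yield $m_{q_1}=m_{q_2}$, after which a Borg--Marchenko-type uniqueness theorem for the expression $\mathfrak t_q$ gives $q_1=q_2$; I would either cite such a theorem or recover it from the transformation-operator analysis of the previous paragraph.

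The hard part, common to both routes, is establishing that the data determine a \emph{unique} transformation kernel, equivalently that the integral operator $I+\mathcal F_q$ attached to $F_q$ is boundedly invertible on $L_2((-1,1),\mathbb C^r)$ (that the homogeneous GLM equation has only the zero solution). This is exactly where condition $(A_3)$ is designed to enter: the completeness of $\{d\,e^{i\lambda_j t}\mid j\in\mathbb Z,\ d\in\Ran\alpha_j\}$ is tailored to kill nontrivial solutions of the homogeneous main equation, while $(A_1)$ supplies the Hilbert--Schmidt and convergence bounds that make $\mathcal F_q$ a well-defined compact perturbation and $(A_2)$ controls the eigenvalue counting so that the free and perturbed transforms are comparable. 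The remaining steps -- the Volterra structure of $\mathcal K_q$, the passage from $K_q$ back to an element $q\in\mathfrak Q$, and the $L_2$ bookkeeping -- I expect to be technical but routine once unique solvability of the main equation is in hand.
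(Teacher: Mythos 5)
Your primary route is essentially the paper's: the authors also translate $\mathfrak a_{q_1}=\mathfrak a_{q_2}$ into equality of the kernel built from $\mu-\mu_0$, and then conclude $\mathscr K_{q_1}=\mathscr K_{q_2}$ from the uniqueness of the factorization $\mathscr U_{\mathfrak a,0}=(\mathscr I+\mathscr K_q)^{-1}(\mathscr I+\mathscr K_q^*)^{-1}$ in $\mathscr G_2$ (equivalently, unique solvability of the GLM equation), exactly as you propose. The only cosmetic difference is the last step: since $q$ is merely $L_2$, the paper does not read $q$ off the diagonal of $K_q$ but instead deduces $\varphi_{q_1}(\cdot,0)=\varphi_{q_2}(\cdot,0)$ from \eqref{varphiRepr} and uses invertibility of $\varphi(x)$ to get $\bq_1=\bq_2$.
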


We base our algorithm of reconstructing the potential $q$ from the
corresponding spectral data $\mathfrak a_q$ on Krein's
accelerant method.
\begin{definition}
We say that a function $H\in L_2((-1,1),M_r)$ is an accelerant if
for every $a\in[0,1]$ the integral equation
$$
f(x)+\int\limits_0^aH(x-t)f(t)dt=0
$$
has only trivial solution in $L_2((0,1),\mathbb C^r)$. We denote
the set of accelerants by $\mathfrak H_2$ and endow it with the
metric of the space $L_2((-1,1),M_r)$.
\end{definition}

We set $\mathfrak H_2^s:=\{H\in\mathfrak H_2 \ | \ H(x)^*=H(-x)\
\textrm{a.e. for}\ x\in(-1,1)\}$.

The spectral data of the operator $T_q$ generate Krein's
accelerant as explained in the following theorem.

\begin{theorem}\label{Th3}
Take a sequence $\mathfrak a=((\lambda_j,\alpha_j))_{j\in\mathbb
Z}$ satisfying the asymptotics $(A_1)$, and set
$\mu:=\mu^{\mathfrak a}$. Then the limit
\begin{equation}\label{HMudef}
H_\mu(x)=\lim\limits_{n\to\infty}\int\limits_{-\pi\left(n-\frac{1}{2}\right)}^
{\pi\left(n+\frac{1}{2}\right)}e^{2i\lambda
x}d(\mu-\mu_0)(\lambda)
\end{equation}
exists in the topology of the space $L_2((-1,1),M_r)$. If, in
addition, $(A_3)$ holds, then the function $H_\mu$ is an
accelerant and belongs to $\mathfrak H_2^s$.
\end{theorem}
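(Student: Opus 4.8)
The plan is to prove the two assertions in turn: first the existence of the $L_2$-limit under $(A_1)$, by decomposing $\mu-\mu_0$ into blocks indexed by the intervals $\Delta_n$ and showing that these blocks are almost orthogonal; then the symmetry and the accelerant property under $(A_3)$, by deriving a nonnegative quadratic-form identity in which the contribution of $\mu_0$ is cancelled by Parseval's theorem.

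\textbf{Existence of $H_\mu$.} Since $\pi n$ is the only atom of $\mu_0$ lying in $\Delta_n$, the symmetric truncations in (\ref{HMudef}) are exactly the partial sums $\sum_{|k|\le n}S_k$ of the series $\sum_n S_n$, where
$$
S_n(x):=\sum_{\lambda_j\in\Delta_n}\alpha_j e^{2i\lambda_j x}-Ie^{2i\pi n x}=e^{2i\pi n x}g_n(x),\quad g_n(x):=\sum_{\lambda_j\in\Delta_n}\alpha_j e^{2i\delta_j x}-I,
$$
and $\delta_j:=\lambda_j-\pi n\in(-\pi/2,\pi/2]$ for $\lambda_j\in\Delta_n$. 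I would first estimate one block: writing $g_n=\sum_{\lambda_j\in\Delta_n}\alpha_j(e^{2i\delta_j x}-1)+(\sum_{\lambda_j\in\Delta_n}\alpha_j-I)$, using $\|e^{2i\delta_j\cdot}-1\|_{L_2(-1,1)}\le C|\delta_j|$, the uniform bound on the number of $\lambda_j$ in each $\Delta_n$ and the resulting boundedness of the $\alpha_j$ (all from the first part of $(A_1)$), one obtains
$$
\|S_n\|_{L_2((-1,1),M_r)}=\|g_n\|_{L_2((-1,1),M_r)}\le C\Bigl(\sum_{\lambda_j\in\Delta_n}|\lambda_j-\pi n|^2\Bigr)^{1/2}+C\Bigl\|I-\sum_{\lambda_j\in\Delta_n}\alpha_j\Bigr\|=:\theta_n,
$$
so that $\sum_n\theta_n^2<\infty$ by the second and third parts of $(A_1)$.

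\textbf{The main obstacle: convergence.} Square-summability $\sum_n\|S_n\|^2<\infty$ alone does not force $\sum_nS_n$ to converge; what saves the day is that $S_n=e^{2i\pi n\cdot}g_n$ is a high-frequency modulation of the slowly varying block $g_n$, so distinct blocks are almost orthogonal. For $n\ne n'$ I would write
$$
\langle S_n,S_{n'}\rangle=\int_{-1}^1 e^{2i\pi(n-n')x}\operatorname{tr}\bigl(g_{n'}(x)^*g_n(x)\bigr)\,dx
$$
and integrate by parts twice, using $e^{\pm2i\pi(n-n')}=1$. The second boundary term and the twice-differentiated integral are $O(\theta_n\theta_{n'}/(n-n')^2)$ and, being summed against the $\ell^1$ kernel $1/(n-n')^2$, are harmless. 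The first boundary term equals $\Im\langle g_n(1),g_{n'}(1)\rangle/(\pi(n-n'))$ (here one uses $g_n(x)^*=g_n(-x)$), which decays only harmonically; this is the crux. I would overcome it by summing symmetrically: the total contribution of these terms to $\|\sum_{|n|\le N}S_n\|^2$ assembles, entrywise in the matrices $g_n(1)$, into a bilinear form with the \emph{antisymmetric} kernel $1/(n-n')$, which is bounded on $\ell^2$ by Hilbert's inequality. Since $\|g_n(1)\|\le C\theta_n$, this yields $\|\sum_{|n|\le N}S_n\|^2\le C\sum_n\theta_n^2$ uniformly in $N$; applying the same estimates to the tails shows that the symmetric partial sums are Cauchy in $L_2((-1,1),M_r)$, and their limit is $H_\mu$.

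\textbf{Accelerant property and symmetry.} The membership $H_\mu\in\mathfrak H_2^s$, i.e. $H_\mu(x)^*=H_\mu(-x)$, is immediate, since each truncated integral $\int e^{2i\lambda x}\,d(\mu-\mu_0)(\lambda)$ has this property ($\alpha_j=\alpha_j^*$, $I=I^*$, $\lambda_j$ real) and it survives the $L_2$-limit. For the accelerant property, let $\mathcal H_a$ be the integral operator on $L_2((0,a),\mathbb C^r)$ with kernel $H_\mu(x-t)$; its Hilbert--Schmidt norm is at most $\sqrt a\,\|H_\mu\|_{L_2((-1,1),M_r)}$, and the Hermitian symmetry of $H_\mu$ makes $I+\mathcal H_a$ self-adjoint. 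The key identity, for $f\in L_2((0,a),\mathbb C^r)$, is
$$
\langle(I+\mathcal H_a)f,f\rangle=\sum_{j}\langle\alpha_j\widehat f(\lambda_j),\widehat f(\lambda_j)\rangle\ge0,\qquad \widehat f(\lambda):=\int_0^a e^{-2i\lambda t}f(t)\,dt,
$$
which I would obtain by inserting the truncated integrals for $H_\mu$, performing the $x,t$ integrations to produce $\langle d\mu(\lambda)\widehat f(\lambda),\widehat f(\lambda)\rangle$, and passing to the limit (the right-hand series has nonnegative terms and bounded partial sums); the contribution of $\mu_0$ is $\sum_n|\widehat f(\pi n)|^2$, which by Parseval's theorem for the orthonormal system $\{e^{2i\pi n t}\}$ on $(0,1)$ (extend $f$ by zero) equals $\|f\|^2$ and so cancels the term coming from $I$. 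Because $I+\mathcal H_a\ge0$ is self-adjoint, any solution of $(I+\mathcal H_a)f=0$ makes the right-hand side vanish, whence $\widehat f(\lambda_j)\perp\Ran\alpha_j$ for every $j$. Finally, the substitution $s=2t-a$ (extending $f$ by zero to $(0,1)$ when $a<1$) rewrites these relations as $\langle F,d\,e^{i\lambda_j\cdot}\rangle_{L_2((-1,1),\mathbb C^r)}=0$ for all $d\in\Ran\alpha_j$, where $F(s):=f((s+a)/2)$; completeness $(A_3)$ then forces $F=0$, i.e. $f=0$. Hence $I+\mathcal H_a$ is injective for every $a\in[0,1]$, so $H_\mu$ is an accelerant, which completes the proof.
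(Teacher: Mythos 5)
Your proof is correct, but it takes a genuinely different route from the paper's in both halves. For the convergence, the paper (Lemma \ref{LemmaHconv}) uses the same block decomposition over the $\Delta_n$ but Taylor-expands $e^{2i\widetilde\lambda_j x}$ to \emph{second} order, splitting each block into an absolutely summable part $e^{2i\pi nx}\gamma_n(x)$, a part $xe^{2i\pi nx}\eta_n$ with $(\eta_n)\in\ell^2$, and a part $e^{2i\pi nx}\beta_n$; since $\{e^{2i\pi nx}\}$ is orthogonal on $(-1,1)$ and the Gram matrix of $\{xe^{2i\pi nx}\}$ has off-diagonal entries $\pi^{-2}(n-m)^{-2}$, square-summability of the coefficients immediately yields $L_2$-convergence. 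This finesses exactly the ``crux'' you isolate: the harmonically decaying boundary term that forces you into two integrations by parts and Hilbert's inequality is absent in the paper's splitting, because the first-order contribution is carried by a system whose almost-orthogonality is already quadratic. Your argument does go through (and your use of $g_n(-x)=g_n(x)^*$ to turn the boundary terms into an antisymmetric bilinear form is the right fix), but it is more delicate than necessary; note also that the truncation in (\ref{HMudef}) actually covers $\bigcup_{k=-n+1}^{n}\Delta_k$ rather than $\bigcup_{|k|\le n}\Delta_k$, a harmless discrepancy since the extra block tends to zero. For the accelerant property, the paper establishes the operator identity $\mathscr U_{\mathfrak a,0}=\mathscr I+\mathscr F_H$ and the equivalence $(A_3)\Leftrightarrow\mathscr U_{\mathfrak a,0}>0$ (Lemma \ref{A3Lemma}), then transports strict positivity to $\mathscr I+\mathscr H$ by an explicit unitary $V$, handling all truncation levels $a\in[0,1]$ at once via strict positivity on the full interval. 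Your direct evaluation of $\langle(\mathscr I+\chi_a\mathscr H\chi_a)f,f\rangle$ through Parseval is essentially the same identity unwound (the quadratic form of $\mathscr U_{\mathfrak a,0}$ becomes $\sum_j\langle\alpha_j\widehat f(\lambda_j),\widehat f(\lambda_j)\rangle$ after the unitary $U$ of Lemma \ref{A3Lemma}), and it has the virtue of being self-contained and of treating each $a$ directly; what it does not buy you is the factorization-ready form $\mathscr U_{\mathfrak a,0}=\mathscr I+\mathscr F_H$, which the paper reuses in Theorem \ref{Th8} and in the proof of Theorem \ref{Th2}.
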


By virtue of Theorem \ref{Th1}, any $\mathfrak a\in\mathfrak A$
satisfies the conditions $(A_1)-(A_3)$. In addition, if
$q\in\mathfrak Q$ and $\mathfrak a=\mathfrak a_q$, then
$\mu^{\mathfrak a}=\mu_q$. Therefore according to Theorem
\ref{Th3} we can define the mapping
$q\mapsto\Upsilon(q):=H_{\mu_q}$ acting from $\mathfrak Q$ to
$\mathfrak H_2^s$, and in order to solve the inverse spectral
problem for the operator $T_q$ we have to find the inverse mapping
$\Upsilon^{-1}$. As in \cite{sturm}, it can be done using the
Krein equation.

It is known that for all $H\in\mathfrak H_2$ the Krein equation
\begin{equation}\label{KreinEq}
R(x,t)+H(x-t)+\int\limits_0^xR(x,s)H(s-t)ds=0,\quad
(x,t)\in\Omega^+,
\end{equation}
where $\Omega^+:=\{(x,t) \ | \ 0\le t\le x\le 1\}$, has a unique
solution $R_H$ in the class $L_2(\Omega^+,M_r)$. Moreover, if we
extend $R_H$ by zero to the triangle $\Omega^-:=\{(x,t) \ | \ 0\le
x<t\le 1\}$, we obtain that $R_H\in G_2(M_r)$ (see \ref{add.1}).

Thus we can define the mapping $\Theta:\mathfrak H_2^s\to
\mathfrak Q$ given by
\begin{equation}\label{ThetaDef}
\Theta(H):=iR_H(\cdot,0).
\end{equation}
The following theorem explains how to solve the inverse spectral
problem for the operator $T_q$.

\begin{theorem}\label{Th4}
$\Upsilon^{-1}=\Theta$. In particular, if $q\in\mathfrak Q$,
$\mathfrak a=\mathfrak a_q$, $\mu=\mu^{\mathfrak a}$, then
\begin{equation}\label{QafterTheta}
q=\Theta(H_\mu).
\end{equation}
\end{theorem}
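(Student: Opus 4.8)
The plan is to show that $\Theta$ and $\Upsilon$ are mutually inverse bijections between $\mathfrak Q$ and $\mathfrak H_2^s$. By Theorem~\ref{Th2} the abstract inverse problem is already solved, since $q\mapsto\mathfrak a_q$ is a bijection and $\Upsilon$ factors as $q\mapsto\mathfrak a_q\mapsto\mu^{\mathfrak a_q}=\mu_q\mapsto H_{\mu_q}$; what Theorem~\ref{Th4} adds is the explicit reconstruction of $q$ from the accelerant via the Krein equation. Accordingly I would establish the two identities $\Theta\circ\Upsilon=\mathrm{id}_{\mathfrak Q}$ and $\Upsilon\circ\Theta=\mathrm{id}_{\mathfrak H_2^s}$; the first of these is precisely (\ref{QafterTheta}) and carries essentially all of the difficulty, while the reverse composition is then comparatively cheap.

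To prove (\ref{QafterTheta}) I would bring in the transformation operator for $T_q$. For $q\equiv0$ the solution of $\mathfrak t_0y=\lambda y$ satisfying $y_1(0)=y_2(0)$ is the free solution $y^0(x,\lambda)=(e^{i\lambda x}d,\,e^{-i\lambda x}d)^\top$, and for general $q\in\mathfrak Q$ one represents the corresponding solution as $y(x,\lambda)=y^0(x,\lambda)+\int_0^x \mathcal K(x,t)\,y^0(t,\lambda)\,dt$ with a Volterra matrix kernel $\mathcal K(x,t)$ supported on $\Omega^+$. The kernel satisfies a Goursat-type problem coming from $\mathfrak t_q$, in which the potential enters through the boundary data, so that $q$ is recovered from the trace of $\mathcal K$ at $t=0$ (up to the scalar factor $i$). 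The task is then to identify this transformation kernel with the solution $R_H$ of the Krein equation (\ref{KreinEq}) for $H=H_{\mu_q}$.

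The bridge between the two is the Parseval identity for the eigenfunction expansion of $T_q$. Writing $\mathcal K$ for the Volterra operator with the above kernel and $\mathcal H$ for convolution by $H_{\mu_q}$ on $L_2((0,1),\mathbb C^r)$, the completeness furnished by $(A_3)$ together with the definition (\ref{HMudef}) of $H_{\mu_q}$ translates, after a Paley--Wiener/Parseval computation (this is where the doubling in $e^{2i\lambda x}$ and the passage to arguments in $(-1,1)$ enter), into an operator factorization of the form $(I+\mathcal K)^*(I+\mathcal K)=I+\mathcal H$. Because $\mathcal K$ is lower triangular, comparing triangular parts forces the kernel of $(I+\mathcal K)^{-1}-I$ to satisfy exactly (\ref{KreinEq}); by the uniqueness of the solution in $L_2(\Omega^+,M_r)$ it coincides with $R_{H_{\mu_q}}$. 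Evaluating at $t=0$ and comparing with the recovery of $q$ from $\mathcal K$ yields $iR_{H_{\mu_q}}(\cdot,0)=q$, which by (\ref{ThetaDef}) is $\Theta(H_{\mu_q})=q$.

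I expect the main obstacle to be the rigorous establishment of the factorization $(I+\mathcal K)^*(I+\mathcal K)=I+\mathcal H$ and the attendant identification of the Krein solution with the transformation kernel: this requires the full Parseval identity for the generalized eigenfunction expansion of $T_q$, control of the convergence supplied by $(A_1)$ and Theorem~\ref{Th3}, and the positivity and invertibility of the truncated operators $I+\mathcal H_a$ encoded in the accelerant property. Once (\ref{QafterTheta}) is in hand, the reverse composition follows: for arbitrary $H\in\mathfrak H_2^s$ the function $q:=\Theta(H)\in\mathfrak Q$ generates an operator $T_q$, and running the same construction backwards shows that its accelerant is again $H$, so $\Upsilon(\Theta(H))=H$. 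Combined with $\Theta\circ\Upsilon=\mathrm{id}_{\mathfrak Q}$ this gives $\Upsilon^{-1}=\Theta$.
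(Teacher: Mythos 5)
Your overall mechanism --- Parseval for $T_q$ yields a factorization of $\mathscr I+\mathscr F_H$ through the transformation operator, and uniqueness of triangular factorization then identifies the factor's kernel with the solution of Krein's equation --- is exactly the circle of ideas the paper uses, but that material lives in Lemma \ref{Th7} and Theorem \ref{Th8}, not in the proof of Theorem \ref{Th4} itself: the paper disposes of Theorem \ref{Th4} in one line, as a corollary of Theorem \ref{Th9} (the candidate potential $\Theta(H_{\mu})$ has spectral data $\mathfrak a$) and the injectivity asserted in Theorem \ref{Th2}. The genuine gap in your sketch is at the step ``comparing triangular parts forces the kernel of $(I+\mathcal K)^{-1}-I$ to satisfy exactly (\ref{KreinEq})''. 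The factorization produced by Parseval, namely (\ref{UaqI}) combined with (\ref{UaqUa0}) and (\ref{UF}), lives on $\mathbb H$, i.e.\ on $2r\times 2r$ kernels, so comparing triangular parts yields the GLM equation (\ref{GLM}) with the block kernel $F_H$ --- not the $r\times r$ Krein equation (\ref{KreinEq}). Passing from one to the other requires showing that the GLM solution has the specific block structure (\ref{KHform}) assembled from $R_H$ and $R_{H^\sharp}$ and that its diagonal trace at $t=0$ reproduces $\bq$; in the paper this is Lemma \ref{Th7} plus the differential identity (\ref{ZeroRel}) verified in the proof of Theorem \ref{Th8} (first for $C^1$ accelerants, then by continuity of $H\mapsto R_H$, $H\mapsto L_H$, $q\mapsto K_q$). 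That computation, not the abstract factorization, is where (\ref{QafterTheta}) is earned, and it is absent from your outline. A smaller but real slip: the Parseval identity gives $\mathscr I+\mathscr F_H=(\mathscr I+\mathscr K_q)^{-1}(\mathscr I+\mathscr K_q^*)^{-1}$ (lower times upper), whereas your $(I+\mathcal K)^*(I+\mathcal K)=I+\mathcal H$ has the factors in the wrong order/side; the lower--upper versus upper--lower distinction determines which integral equation the kernel satisfies.

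The closing claim that $\Upsilon\circ\Theta=\mathrm{id}_{\mathfrak H_2^s}$ follows ``by running the construction backwards'' is asserted without justification: it amounts to the surjectivity of $\Upsilon$ onto $\mathfrak H_2^s$, i.e.\ that every symmetric accelerant arises as $H_{\mu_q}$ for some $q\in\mathfrak Q$, and neither your sketch nor the paper proves this. Fortunately it is not needed: the substance of Theorem \ref{Th4}, and the only part used in the reconstruction algorithm, is the left-inverse identity (\ref{QafterTheta}), which follows by applying Theorem \ref{Th9} to $\mathfrak a=\mathfrak a_q$ (admissible by the necessity part of Theorem \ref{Th1}) and then invoking the injectivity from Theorem \ref{Th2}. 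If you want a self-contained argument along your lines, you must supply the full analogue of Theorem \ref{Th8}; otherwise drop the right-inverse claim and cite Theorems \ref{Th2} and \ref{Th9}.
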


According to this theorem the reconstruction algorithm can proceed
as follows. Given $\mathfrak a\in\mathfrak A$ we construct the
matrix-valued measure $\mu:=\mu^{\mathfrak a}$ via (\ref{MuAdef}),
which generates the accelerant $H:=H_\mu$ via (\ref{HMudef}).
Solving the Krein equation (\ref{KreinEq}) we find the function
$R_H$, which gives us $q$ via the formulas (\ref{QafterTheta}) and
(\ref{ThetaDef}). That $q$ is the function looked for follows from
the fact that the Dirac operator $T_q$ has the spectral data
$\mathfrak a$ we have started with.

We visualize the reconstruction algorithm by means of the
following diagram:
$$
\mathfrak a \xrightarrow[s_1]{(\ref{MuAdef})} \mu^{\mathfrak
a}=:\mu \xrightarrow[s_2]{(\ref{HMudef})}H_\mu=:H
\xrightarrow[s_3]{(\ref{KreinEq})}R_H
\xrightarrow[s_4]{(\ref{ThetaDef})}\Theta(H)=q.
$$
Here $s_j$ denotes the step number $j$. Steps $s_1$, $s_2$, $s_4$
are trivial. The basic and non-trivial step is $s_3$.

\begin{remark}
One can also consider the case of more general separated
self-adjoint boundary conditions. Denote by $T_{q,a,b}$ the
operator generated by the differential expression
(\ref{DiffExpr}) and the boundary conditions
$$
ay(0)=0,\quad by(1)=0,
$$
where $a$ and $b$ are $r\times2r$ matrices with complex entries such
that (see \cite{ClarkGesz})
$$
aa^*=bb^*=I,\quad a\vartheta a^*=b\vartheta b^*=0.
$$
For the operator $T_{q,a,b}$, the analogues
of Theorems \ref{Th1}--\ref{Th4} can be proved, but their
formulations are more complicated since the spectrum of the
non-perturbed operator $T_{0,a,b}$ has a more involved structure.
Namely, it consists of $2r$ eigenvalue sequences of the form $(\lambda^0_j+2\pi k)_{k\in\mathbb{Z}}$, $j=1,\dots,2r$, counting multiplicities. The authors plan to
consider the case of general (not necessarily separated) boundary
conditions in a forthcoming paper.
\end{remark}

\section{Direct spectral analysis}
In this section we study the properties of the spectral data for
operators under consideration.

\subsection{Basic properties of the operator $T_q$}
Here we prove self-adjointness of $T_q$, construct its resolvent
and the resolution of identity.

Let $\lambda\in\mathbb C$. For an arbitrary $q\in\mathfrak Q$
denote by $u_q=u_q(\cdot,\lambda)\in W_2^1((0,1),M_{2r})$ a
solution of the Cauchy problem
\begin{equation}\label{probl1}
\vartheta \tfrac{d}{dx}u+\bq u=\lambda u,\quad
u(0,\lambda)=I_{2r},
\end{equation}
where $\vartheta$ and $\bq$ are defined via (\ref{tqdef}). Note
that if $q\equiv0$ then
\begin{equation}\label{Phi0}
u_0(x,\lambda)=\begin{pmatrix}e^{i\lambda x}I&0\\0&e^{-i\lambda
x}I\end{pmatrix}.
\end{equation}
Denote
\begin{equation}\label{PhiPsiDef}
\varphi_q(\cdot,\lambda):=u_q(\cdot,\lambda)\vartheta a^*,\quad
\psi_q(\cdot,\lambda):=u_q(\cdot,\lambda)a^*,
\end{equation}
where
$$
a:=\tfrac{1}{\sqrt{2}}\begin{pmatrix}I,&-I\end{pmatrix},
$$
and set $s(\lambda,q):=a\varphi_q(1,\lambda)$,
$c(\lambda,q):=a\psi_q(1,\lambda)$,
$m_q(\lambda):=-s(\lambda,q)^{-1}c(\lambda,q)$. We call $m_q$
\emph{the Weyl--Titchmarsh} function of the operator $T_q$.

Some basic properties of the objects just introduced are described
in the following lemma.
\begin{lemma}\label{Th5}
\begin{itemize}
\item[$(i)$]For every $q\in\mathfrak Q$ there exists a unique
matrix-valued function $K_q\in G_2^+(M_{2r})$ such that for any
$\lambda\in\mathbb C$ and $x\in[0,1]$,
\begin{equation}\label{varphiRepr}
\varphi_q(x,\lambda)=\varphi_0(x,\lambda)+\int\limits_0^x
K_q(x,s)\varphi_0(s,\lambda)ds;
\end{equation}
\item[$(ii)$]the mapping $\mathfrak Q\owns q\mapsto K_q\in
G_2^+(M_{2r})$ is continuous;
\item[$(iii)$]the matrix-valued functions $\lambda\mapsto
s(\lambda,q)$ and $\lambda\mapsto c(\lambda,q)$ are entire and
allow the representations
\begin{equation}\label{slambdarepr}
s(\lambda,q)=(\sin\lambda) I+\int\limits_{-1}^{1}e^{i\lambda
t}g_1(t)dt,\quad c(\lambda,q)=(\cos\lambda)
I+\int\limits_{-1}^{1}e^{i\lambda t}g_2(t)dt,
\end{equation}
where $g_1$ and $g_2$ are some (depending on $q$) functions from
the space $L_2((-1,1),M_r)$;
\item[$(iv)$]for every $q\in\mathfrak Q$ the following relation holds:
\begin{equation}\label{PhiPsiRel}
-\psi_q(x,\lambda)\varphi_q(x,\overline\lambda)^*+\varphi_q(x,\lambda)\psi_q(x,\overline\lambda)^*\equiv\vartheta.
\end{equation}
\end{itemize}
\end{lemma}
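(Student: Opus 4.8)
The plan is to construct a Volterra transformation operator carrying $\varphi_0$ into $\varphi_q$ and then to read off $(iii)$ and $(iv)$ from that representation together with a propagator identity for $u_q$. For $(i)$ I would first rewrite the Cauchy problem \eqref{probl1} in Duhamel form as the Volterra equation
\[
u_q(x,\lambda)=u_0(x,\lambda)-\int_0^x u_0(x-s,\lambda)\vartheta^{-1}\bq(s)u_q(s,\lambda)\,ds,
\]
which is uniquely solvable by successive approximations; since the resulting series converges locally uniformly in $\lambda$, each entry of $u_q(x,\cdot)$, and hence of $\varphi_q$, $\psi_q$, $s(\cdot,q)$ and $c(\cdot,q)$, is entire, which already yields the entireness asserted in $(iii)$. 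To obtain the kernel I substitute the ansatz \eqref{varphiRepr} into $\vartheta\partial_x\varphi_q+\bq\varphi_q=\lambda\varphi_q$, replace the term $\lambda\varphi_0$ by $\vartheta\partial_s\varphi_0$ and integrate by parts, and then match the contributions supported on $\{s=x\}$, on $\{s=0\}$ and in the interior. This gives the Goursat problem for $K_q$ on $\Omega^+$: the equation $\vartheta\partial_x K_q+\bq K_q+(\partial_s K_q)\vartheta=0$, the diagonal condition $\vartheta K_q(x,x)-K_q(x,x)\vartheta=-\bq(x)$, and the boundary condition $K_q(x,0)a^*=0$.

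Passing to characteristic coordinates turns this Goursat problem into a coupled system of Volterra integral equations for the four $r\times r$ blocks of $K_q$, which I again solve by iteration. The purpose of working in $G_2^+(M_{2r})$ is that each iterate can be bounded in the Hilbert–Schmidt norm by Cauchy–Schwarz, \emph{without any pointwise control of} $q$, and the majorizing series converges; this yields both existence and the membership $K_q\in G_2^+(M_{2r})$, while uniqueness is immediate from the Volterra structure. For $(ii)$ the iterates depend on $q$ in a controlled multilinear manner, the majorizing series converges uniformly on bounded subsets of $\mathfrak Q$, and every term is continuous in $q$, so $q\mapsto K_q$ is continuous. I expect this construction—carrying the transformation operator through in the $G_2^+$ topology for merely square-summable $q$, together with the estimates uniform in $q$ needed for $(ii)$—to be the main obstacle; statements $(iii)$ and $(iv)$ are short once $(i)$ is in hand.

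For $(iii)$ I substitute \eqref{varphiRepr} into $s(\lambda,q)=a\varphi_q(1,\lambda)$ and $c(\lambda,q)=a\psi_q(1,\lambda)$. Using \eqref{Phi0} one computes directly that $a\varphi_0(1,\lambda)=(\sin\lambda)I$ and $a\psi_0(1,\lambda)=(\cos\lambda)I$. In the remaining term $a\int_0^1 K_q(1,s)\varphi_0(s,\lambda)\,ds$ only the exponentials $e^{\pm i\lambda s}$ occur, and the substitutions $t=s$ and $t=-s$ recast it as $\int_{-1}^1 e^{i\lambda t}g_1(t)\,dt$, where $g_1$ is assembled from the four blocks of $K_q(1,\cdot)$. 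Since membership in $G_2^+(M_{2r})$ guarantees that the trace $K_q(1,\cdot)$ is a well-defined element of $L_2((0,1),M_{2r})$, we get $g_1\in L_2((-1,1),M_r)$, and likewise $g_2$ for $c(\cdot,q)$, which is exactly \eqref{slambdarepr}.

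For $(iv)$ I rewrite $\psi_q=u_q a^*$ and $\varphi_q=u_q\vartheta a^*$ and use $\vartheta^*=-\vartheta$ to reduce the left-hand side of \eqref{PhiPsiRel} to $u_q(x,\lambda)\bigl(a^*a\,\vartheta+\vartheta\,a^*a\bigr)u_q(x,\overline\lambda)^*$. A direct computation with $a=\tfrac1{\sqrt2}(I,-I)$ gives $a^*a\,\vartheta+\vartheta\,a^*a=\vartheta$, so the expression equals $V(x):=u_q(x,\lambda)\vartheta\,u_q(x,\overline\lambda)^*$. Finally $V(0)=\vartheta$, and differentiating and using $\vartheta^{-1}=-\vartheta$ and $\bq^*=\bq$ shows that $V$ satisfies the linear matrix equation $V'=-\vartheta(\lambda-\bq)V+V(\lambda-\bq)\vartheta$; as the constant $V\equiv\vartheta$ solves the same equation with the same initial value, uniqueness for this linear ODE forces $V(x)\equiv\vartheta$, which is precisely \eqref{PhiPsiRel}.
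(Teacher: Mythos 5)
Your route to $(i)$--$(ii)$ is genuinely different from the paper's. The authors do not construct the kernel directly: they conjugate the Dirac system by a constant unitary $W$ into the form $B\frac{d}{dx}v+Qv=\lambda v$ with $Q$ built from $\Re q$ and $\Im q$, and then import from \cite{cauchy} the representation \eqref{Vform} of the fundamental solution together with the membership $P_Q\in G_2^+(M_{2r})$ and the continuity of $Q\mapsto P_Q$; the kernel $K_q$ is then assembled algebraically from $P^{\pm}$ and $J$. You instead derive the Goursat problem for $K_q$ and solve it by iteration in the $G_2^+$ norm; this is a legitimate self-contained alternative (essentially what the cited reference does internally), and your Goursat data --- the interior equation, the commutator condition $\vartheta K_q(x,x)-K_q(x,x)\vartheta=-\bq(x)$, and the boundary condition $K_q(x,0)a^*=0$ forced by $\varphi_0(0,\lambda)=\vartheta a^*$ --- are correct. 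The price is that you must actually carry out the $L_2$-in-$q$ estimates that the paper obtains by citation. For $(iv)$ your argument is equivalent to the paper's: they show $\frac{d}{dx}\bigl[u_q(x,\overline\lambda)^*\vartheta u_q(x,\lambda)\bigr]\equiv0$ and then invert to get $u_q\vartheta u_q^*\equiv\vartheta$, while you write the linear ODE for $V=u_q\vartheta u_q^*$ directly; both rest on the same identity $a^*a\vartheta+\vartheta a^*a=\vartheta$.

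The one genuine gap is in $(iii)$, for $c(\lambda,q)$. The representation \eqref{varphiRepr} represents $\varphi_q$ only; you cannot substitute it into $c(\lambda,q)=a\psi_q(1,\lambda)$, and ``likewise $g_2$'' does not follow from anything you have established. Your Goursat construction is tied to the initial value $\varphi_q(0,\lambda)=\vartheta a^*$ through the boundary condition $K_q(x,0)a^*=0$; the function $\psi_q$ has the different initial value $a^*$, so the analogous kernel for $\psi_q$ satisfies a different boundary condition (annihilating $\vartheta a^*$ rather than $a^*$) and is a different kernel. The clean fix, consistent with your own Duhamel series, is to build the transformation operator for the full fundamental matrix $u_q$ first --- a representation of type \eqref{Vform} --- and then read off representations of both $\varphi_q=u_q\vartheta a^*$ and $\psi_q=u_qa^*$ by right multiplication; this is exactly how the paper obtains the formula for $c(\lambda,q)$. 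As written, your derivation of $g_2$ is unsupported.
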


\begin{proof}
Let us fix $q\in\mathfrak Q$ and set $q_1:=-\Im
q=-\frac{1}{2}(q-q^*)$, $q_2:=\Re q=\frac{1}{2}(q+q^*)$. Consider
the Cauchy problem
$$
B\tfrac{d}{dx}v+Qv=\lambda v, \quad v(0,\lambda)=I_{2r},
$$
where
$$
B:=\begin{pmatrix}0&I\\-I&0\end{pmatrix},\quad
Q:=\begin{pmatrix}q_1&q_2\\q_2&-q_1\end{pmatrix}.
$$
It follows from \cite{cauchy} that this problem has a unique
solution $v_q=v_q(\cdot,\lambda)$ in $W_2^1((0,1),M_{2r})$ and
that $v_q(\cdot,\lambda)$ can be represented in the form
\begin{equation}\label{Vform}
 v_q(x,\lambda)=e^{-\lambda xB}
        + \int\limits_0^x P^+(x,s)e^{-\lambda (x-2s) B}ds
        + \int\limits_0^x P^-(x,s)e^{\lambda (x-2s) B}ds,
\end{equation}
where $ e^{xB}=(\cos x)I_{2r}+(\sin x)B$.

Note that $\vartheta=W^{-1}BW$ and $\bq=W^{-1}Q W$, where $W$ is
the unitary matrix
$$
W=\frac{1}{\sqrt{2}}\begin{pmatrix}I&-iI\\-iI&I\end{pmatrix}.
$$
Therefore the function $u_q(\cdot,\lambda)=W^{-1}v_q(\cdot,\lambda)W$
solves the Cauchy problem (\ref{probl1}).

Note that
$$
e^{xB}J=Je^{-xB},\quad J=\begin{pmatrix}0&I\\I&0\end{pmatrix}.
$$
Using now (\ref{Vform}) and performing some calculations we easily
obtain that
$$
\varphi_q(x,\lambda)=\varphi_0(x,\lambda)+\int\limits_0^x
K_q(x,s)\varphi_0(s,\lambda)ds,
$$
where $K_q(x,t)=W^{-1}P_Q(x,t)W$ and
$$
P_Q(x,t)=\tfrac{1}{2}\left\{P^+\left(x,\tfrac{x-t}{2}\right)+P^+\left(x,\tfrac{x+t}{2}\right)J
+P^-\left(x,\tfrac{x-t}{2}\right)J+P^-\left(x,\tfrac{x+t}{2}\right)\right\}.
$$
It follows from \cite{cauchy} that the function $P_Q$ belongs to $G_2^+(M_{2r})$ and that the mapping $ L_2((0,1),M_{2r})\owns
Q\mapsto P_Q\in G_2^+(M_{2r}) $ is continuous. Therefore the first
two statements of the present lemma will be proved if we prove the
uniqueness of the representation (\ref{varphiRepr}), but this can
be easily done repeating the proof given in \cite{cauchy}.

Now let us prove $(iii)$. By virtue of the definition of
$s(\lambda,q)$ and the representation (\ref{varphiRepr}) we obtain
that
$$
s(\lambda,q)=(\sin\lambda)I+\int\limits_0^1aK_q(1,s)\varphi_0(s,\lambda)ds
$$
and simple calculations yield the formula for $s(\lambda)$ in
(\ref{slambdarepr}) with some $g_1\in L_2((-1,1),M_r)$. Having
noted that
$\psi_q(x,\lambda)=u_q(x,\lambda)a^*=W^{-1}v_q(x,\lambda)Wa^*$ and
taking into consideration (\ref{Vform}) we can analogously obtain
the formula for $c(\lambda)$.

It remains to prove $(iv)$. A direct verification shows that
$$
\tfrac{d}{dx}\{u_q(x,\overline\lambda)^*\vartheta
u_q(x,\lambda)\}\equiv0,
$$
and therefore we obtain the relation
$u_q(x,\overline\lambda)^*\vartheta
u_q(x,\lambda)\equiv\vartheta$. From this equality we obtain that
$\vartheta u_q(x,\lambda)\vartheta u_q(x,\overline\lambda)^*\equiv -I_{2r}$, and thus
$u_q(x,\lambda)\vartheta
u_q(x,\overline\lambda)^*\equiv\vartheta$. Having noted that
$\vartheta=a^*a\vartheta+\vartheta a^*a$ we conclude that
$$
u_q(x,\lambda)a^*a\vartheta
u_q(x,\overline\lambda)^*+u_q(x,\lambda)\vartheta a^*a
u_q(x,\overline\lambda)^*\equiv\vartheta,
$$
which proves the relation (\ref{PhiPsiRel}).
\end{proof}

For $\lambda\in\mathbb C$ denote by $\Phi_q(\lambda)$ the operator
acting from $\mathbb C^r$ to $\mathbb H$ by the formula
\begin{equation}\label{PhiOperDef}
[\Phi_q(\lambda)c](x):=\varphi_q(x,\lambda)c.
\end{equation}
Taking into consideration (\ref{varphiRepr}) we obtain that
\begin{equation}\label{PhiKrel}
\Phi_q(\lambda)=(\mathscr I+\mathscr K_q)\Phi_0(\lambda),\quad
\lambda\in\mathbb C,
\end{equation}
where $\mathscr K_q$ is an integral operator with kernel $K_q$ and
$\mathscr I$ is the identity operator in $\mathscr B(\mathbb H)$,
which is the algebra of bounded linear operators acting in
$\mathbb H$. Note that since $K_q$ belongs to $G_2^+(M_{2r})$, the
operator $\mathscr K_q$ belongs to $\mathscr G_2^+(M_{2r})$ (see
\ref{add.1}), and hence it is a Volterra operator (see
\cite{kreinvolterra}).

Some properties of the operators $\Phi_q(\lambda)$ and the
Weyl--Titchmarsh function $m_q(\lambda)$ are formulated in the
following lemma.

\begin{lemma}\label{Lemma1}
Let $q\in\mathfrak Q$. Then the following statements hold:
\begin{itemize}
\item[$(i)$]the operator function $\lambda\mapsto\Phi_q(\lambda)$
is analytic in $\mathbb C$; moreover, for $\lambda\in\mathbb C$
\begin{equation}\label{KerRanPhi}
\ker\Phi_q(\lambda)=\{0\},\quad \Ran\Phi_q(\lambda)^*=\mathbb C^r,
\end{equation}
\begin{equation}\label{KerPhiRel}
\ker(T_q-\lambda\mathscr I)=\Phi_q(\lambda)\ker s(\lambda,q).
\end{equation}
\item[$(ii)$]the operator functions $\lambda\mapsto
s(\lambda,q)^{-1}$ and
$$
\lambda\mapsto
m_q(\lambda)=-s(\lambda,q)^{-1}c(\lambda,q)
$$
are meromorphic in
$\mathbb C$; moreover, $m_0(\lambda)=-\cot\lambda I$ and
\begin{equation}\label{mAsymp}
\|m_q(\lambda)+\cot\lambda I\|=o(1)
\end{equation}
as $\lambda\to\infty$ within the domain $\mathcal O=\{z\in\mathbb
C \ | \ \forall n\in\mathbb Z \ |z-\pi n|>1\}$.
\end{itemize}
\end{lemma}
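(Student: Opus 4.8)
The plan is to derive everything from the factorization $\Phi_q(\lambda)=(\mathscr I+\mathscr K_q)\Phi_0(\lambda)$ in (\ref{PhiKrel}), the explicit form of $\varphi_0$, and the representations (\ref{slambdarepr}) for $s(\lambda,q)$ and $c(\lambda,q)$. For part $(i)$ I would first record, from (\ref{Phi0}) and (\ref{PhiPsiDef}), the explicit formula $\varphi_0(x,\lambda)=\tfrac{1}{i\sqrt2}\,(e^{i\lambda x}I,\ e^{-i\lambda x}I)^\top$; its entries are entire in $\lambda$, so $\lambda\mapsto\Phi_0(\lambda)$ is analytic in the operator norm, and since $\mathscr K_q$ does not depend on $\lambda$, analyticity of $\Phi_q$ follows from (\ref{PhiKrel}). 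Because $\mathscr K_q$ is a Volterra operator, $\mathscr I+\mathscr K_q$ is boundedly invertible, whence $\ker\Phi_q(\lambda)=\ker\Phi_0(\lambda)$; the explicit form of $\varphi_0$ shows that $\Phi_0(\lambda)c=0$ forces $c=0$, so $\ker\Phi_q(\lambda)=\{0\}$. The identity $\Ran\Phi_q(\lambda)^*=(\ker\Phi_q(\lambda))^\perp=\mathbb C^r$ is then immediate, the range being closed since $\mathbb C^r$ is finite-dimensional.

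To prove (\ref{KerPhiRel}) I would use that the general solution of $\mathfrak t_q y=\lambda y$ is $y=u_q(\cdot,\lambda)\xi$ with $\xi\in\mathbb C^{2r}$. The boundary condition at $0$ reads $ay(0)=0$, i.e.\ $a\xi=0$; since $a\vartheta a^*=0$ while $\vartheta a^*$ has rank $r$, one has $\ker a=\Ran(\vartheta a^*)$ by a dimension count, so $\xi=\vartheta a^*d$ for a unique $d\in\mathbb C^r$, and hence $y=\varphi_q(\cdot,\lambda)d=\Phi_q(\lambda)d$. The condition at $1$, namely $ay(1)=0$, then becomes $a\varphi_q(1,\lambda)d=s(\lambda,q)d=0$, i.e.\ $d\in\ker s(\lambda,q)$. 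Reading this chain of equivalences in both directions yields $\ker(T_q-\lambda\mathscr I)=\Phi_q(\lambda)\ker s(\lambda,q)$.

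For part $(ii)$, the functions $s(\lambda,q)$ and $c(\lambda,q)$ are entire by Lemma \ref{Th5}$(iii)$, so $s(\lambda,q)^{-1}$ and $m_q$ will be meromorphic once $\det s(\lambda,q)\not\equiv0$; I intend to obtain this together with the asymptotics from a single estimate. Writing $s(\lambda,q)=(\sin\lambda)(I+S(\lambda))$ and $c(\lambda,q)=(\cos\lambda)(I+C(\lambda))$ with $S(\lambda)=(\sin\lambda)^{-1}\int_{-1}^{1}e^{i\lambda t}g_1(t)\,dt$ and $C(\lambda)$ defined analogously, the case $q\equiv0$ (where $g_1=g_2=0$) gives $m_0=-\cot\lambda\,I$ at once. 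The crux is the generalized Riemann--Lebesgue bound
\[
e^{-|\Im\lambda|}\Bigl\|\int_{-1}^{1}e^{i\lambda t}g(t)\,dt\Bigr\|\to0,\qquad |\lambda|\to\infty,\quad g\in L_2((-1,1),M_r),
\]
which I would prove by approximating $g$ in $L_2$ by some $g_\varepsilon\in C_c^\infty((-1,1))$: for the remainder one uses the crude bound $\|\int e^{i\lambda t}(g-g_\varepsilon)\,dt\|\le e^{|\Im\lambda|}\|g-g_\varepsilon\|_{L_1}$, while for the smooth part integration by parts (the boundary terms vanish) supplies an extra factor $O(1/|\lambda|)$.

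Combining this with the standard lower bound $|\sin\lambda|\ge C\,e^{|\Im\lambda|}$ valid on $\mathcal O$ and the boundedness of $\cot\lambda$ on $\mathcal O$, I would conclude that $\|S(\lambda)\|+\|C(\lambda)\|\to0$ as $\lambda\to\infty$ in $\mathcal O$. Hence $I+S(\lambda)$ is invertible there, so $\det s(\lambda,q)\not\equiv0$ and the desired meromorphicity holds, and from
\[
m_q(\lambda)+\cot\lambda\,I=-\cot\lambda\,\bigl[(I+S(\lambda))^{-1}(I+C(\lambda))-I\bigr]=-\cot\lambda\,(I+S(\lambda))^{-1}(C(\lambda)-S(\lambda))
\]
one reads off $\|m_q(\lambda)+\cot\lambda\,I\|=o(1)$. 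I expect the main obstacle to be exactly this uniform Paley--Wiener/Riemann--Lebesgue estimate together with the uniform control of $(I+S(\lambda))^{-1}$ on $\mathcal O$; once those are in hand, everything else reduces to the algebraic manipulations above.
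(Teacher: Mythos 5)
The paper does not actually prove this lemma --- it simply declares the proof ``analogous to the proof of Lemma 2.3 in \cite{sturm}'' --- so there is nothing in the text to compare your argument with line by line; what you give is the natural direct proof, and it is essentially correct. Part $(i)$ is fine: analyticity from (\ref{PhiKrel}) and the explicit $\varphi_0$, injectivity of $\Phi_q(\lambda)$ from the invertibility of $\mathscr I+\mathscr K_q$ (Volterra), and the identification $\ker a=\Ran(\vartheta a^*)$ via $aa^*=I$, $a\vartheta a^*=0$ and a dimension count, followed by imposing the two boundary conditions, is exactly the standard route to (\ref{KerPhiRel}).

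In part $(ii)$ there is one inaccurate intermediate claim: $\|C(\lambda)\|\to0$ on $\mathcal O$ is false, because the zeros $\tfrac{\pi}{2}+\pi n$ of $\cos$ lie at distance $\tfrac{\pi}{2}>1$ from the lattice $\pi\mathbb Z$ and hence belong to $\mathcal O$, so $C(\lambda)=(\cos\lambda)^{-1}\int_{-1}^1e^{i\lambda t}g_2(t)\,dt$ has poles \emph{inside} $\mathcal O$ and the lower bound $|\cos\lambda|\ge c\,e^{|\Im\lambda|}$ you would need is unavailable there. This does not damage the final estimate, since only the product $\cot\lambda\,C(\lambda)=(\sin\lambda)^{-1}\int_{-1}^1e^{i\lambda t}g_2(t)\,dt$ enters your last display, and that quantity is $o(1)$ on $\mathcal O$ by your Paley--Wiener/Riemann--Lebesgue bound combined with $|\sin\lambda|\ge C e^{|\Im\lambda|}$; but you should either avoid factoring $c(\lambda,q)$ through $\cos\lambda$ altogether and write $m_q(\lambda)+\cot\lambda\,I=-(I+S(\lambda))^{-1}(\sin\lambda)^{-1}\bigl[\int_{-1}^1 e^{i\lambda t}g_2(t)\,dt-\cot\lambda\int_{-1}^1 e^{i\lambda t}g_1(t)\,dt\bigr]$ directly, or state the smallness claim only for $\cot\lambda\,C(\lambda)$. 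The remaining ingredients --- the approximation argument for the generalized Riemann--Lebesgue estimate, the uniform invertibility of $I+S(\lambda)$ for large $\lambda\in\mathcal O$, and the deduction that $\det s(\cdot,q)\not\equiv0$ so that $s(\cdot,q)^{-1}$ and $m_q$ are meromorphic --- are all sound.
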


\begin{proof}
The proof of this lemma is analogous to the proof of Lemma 2.3 in
\cite{sturm}.
\end{proof}

Finally, basic properties of the operator $T_q$ are described in
the following theorem.

\begin{theorem}\label{Th6}
Let $q\in\mathfrak Q$. Then the following statements hold:
\begin{itemize}
\item[$(i)$]the operator $T_q$ is self-adjoint;
\item[$(ii)$]the spectrum $\sigma(T_q)$ of $T_q$ consists of
isolated real eigenvalues and
$$
\sigma(T_q)=\{\lambda \ | \ \ker s(\lambda,q)\neq\{0\}\};
$$
\item[$(iii)$]let $\lambda_j=\lambda_j(q)$ and let $P_{j,q}$ be
the orthogonal projector on $\ker(T_q-\lambda\mathscr I)$, then
$$
\sum\limits_{j=-\infty}^{\infty}P_{j,q}=\mathscr I;
$$
\item[$(iv)$]the norming matrices $\alpha_j=\alpha_j(q)$ satisfy
the relations $\alpha_j\ge0$, $j\in\mathbb Z$; moreover, for all
$j\in\mathbb Z$ we have
$$
P_{j,q}=\Phi_q(\lambda_j)\alpha_j\Phi_q^*(\lambda_j),
$$
where $\Phi_q^*(\lambda):=[\Phi_q(\lambda)]^*$.
\end{itemize}
\end{theorem}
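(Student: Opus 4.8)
The plan is to derive all four statements from one object: the explicit resolvent of $T_q$, assembled from the solution $\varphi_q$ satisfying the boundary condition at $x=0$ and the Weyl solution satisfying the condition at $x=1$. First I would record that the boundary conditions $y_1(0)=y_2(0)$, $y_1(1)=y_2(1)$ read $ay(0)=ay(1)=0$ with $a=\tfrac1{\sqrt2}(I,-I)$, and that $aa^*=I$, $a\vartheta a^*=0$. A Lagrange-identity computation (integration by parts using $\vartheta^*=-\vartheta$ and $\bq=\bq^*$) gives, for $y,z\in D(T_q)$,
$$
\langle T_qy,z\rangle-\langle y,T_qz\rangle=(\vartheta y(1),z(1))-(\vartheta y(0),z(0)),
$$
and both boundary terms vanish because $a\vartheta a^*=0$ together with $aa^*=I$ makes $\ker a$ neutral for the Hermitian form $(i\vartheta\,\cdot,\cdot)$; hence $T_q$ is symmetric. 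Symmetric operators have real eigenvalues, so by the identity (\ref{KerPhiRel}) and the injectivity of $\Phi_q(\lambda)$ from (\ref{KerRanPhi}) we get $\ker s(\lambda,q)=\{0\}$, i.e. $s(\lambda,q)$ invertible, for every non-real $\lambda$. Solving $(T_q-\lambda)y=f$ by variation of parameters over $u_q(\cdot,\lambda)$ and imposing the two boundary conditions reduces to the invertibility of $s(\lambda,q)$, so $\Ran(T_q-\lambda)=\mathbb H$ for non-real $\lambda$; a symmetric operator with this surjectivity is self-adjoint, proving $(i)$.

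Next I would make this solvability explicit. Put $\Psi_q(\cdot,\lambda):=\psi_q(\cdot,\lambda)+\varphi_q(\cdot,\lambda)m_q(\lambda)$; since $a\psi_q(1,\lambda)=c(\lambda,q)$, $a\varphi_q(1,\lambda)=s(\lambda,q)$ and $m_q=-s^{-1}c$, one has $a\Psi_q(1,\lambda)=c+sm_q=0$, so $\Psi_q$ satisfies the boundary condition at $x=1$. Using the constant Wronskian (\ref{PhiPsiRel}) to fix the normalisation and the jump across the diagonal, the resolvent is the integral operator $[(T_q-\lambda)^{-1}f](x)=\int_0^1 G_q(x,t,\lambda)f(t)\,dt$ with
$$
G_q(x,t,\lambda)=\begin{cases}\varphi_q(x,\lambda)\,\Psi_q(t,\bar\lambda)^*,&0\le x\le t,\\ \Psi_q(x,\lambda)\,\varphi_q(t,\bar\lambda)^*,&t<x\le1,\end{cases}
$$
up to a fixed factor determined by (\ref{PhiPsiRel}). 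Since $G_q(\cdot,\cdot,\lambda)\in L_2$, the resolvent is compact, so $\sigma(T_q)$ is a discrete set of isolated eigenvalues of finite multiplicity, real by self-adjointness; and (\ref{KerPhiRel}) with the injectivity of $\Phi_q(\lambda)$ gives $\ker(T_q-\lambda)\ne\{0\}\iff\ker s(\lambda,q)\ne\{0\}$, which is the description in $(ii)$. Statement $(iii)$ is then the spectral theorem for a self-adjoint operator with compact resolvent: the eigenspaces span $\mathbb H$, so the orthogonal eigenprojectors $P_{j,q}$ sum to $\mathscr I$ in the strong operator topology.

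For $(iv)$ I would read off residues at an eigenvalue $\lambda_j$. From the Laurent expansion of a self-adjoint resolvent, $\res_{\lambda=\lambda_j}(T_q-\lambda)^{-1}=-P_{j,q}$. On the other hand, the only $\lambda$-singularity of $G_q$ near $\lambda_j$ enters through $m_q$ inside $\Psi_q$, and $\res_{\lambda=\lambda_j}m_q=-\alpha_j$ by definition; since $\lambda_j$ is real, the singular part of $G_q$ on either triangle equals $\varphi_q(x,\lambda_j)(-\alpha_j)\varphi_q(t,\lambda_j)^*$. The integral operator with this kernel is exactly $-\Phi_q(\lambda_j)\alpha_j\Phi_q^*(\lambda_j)$, whence $P_{j,q}=\Phi_q(\lambda_j)\alpha_j\Phi_q^*(\lambda_j)$. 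To conclude $\alpha_j\ge0$ I would use $\Ran\Phi_q(\lambda_j)^*=\mathbb C^r$ from (\ref{KerRanPhi}): given $c\in\mathbb C^r$ pick $v$ with $\Phi_q^*(\lambda_j)v=c$, so $(\alpha_jc,c)=(\Phi_q(\lambda_j)\alpha_j\Phi_q^*(\lambda_j)v,v)=(P_{j,q}v,v)=\|P_{j,q}v\|^2\ge0$, and self-adjointness of $P_{j,q}$ with injectivity of $\Phi_q(\lambda_j)$ forces $\alpha_j=\alpha_j^*$.

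The genuinely non-trivial step is the construction and verification of the Green's function $G_q$: checking that $(T_q-\lambda)^{-1}f$ lands in $D(T_q)$ (both boundary conditions), that it actually inverts $T_q-\lambda$ with the jump across $x=t$ handled precisely by the Wronskian (\ref{PhiPsiRel}), and that its principal part at $\lambda_j$ is exactly the stated rank-one-type kernel. Everything else — symmetry, compactness, the spectral theorem, and the residue bookkeeping — is routine once $G_q$ is in hand, and the argument parallels the Sturm--Liouville treatment underlying Lemma~\ref{Lemma1}.
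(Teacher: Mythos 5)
Your proposal is correct and follows essentially the same route as the paper: verify symmetry, build the resolvent from $\varphi_q$, $\psi_q$ and $m_q$ (your piecewise Green's kernel with $\Psi_q=\psi_q+\varphi_q m_q$ is exactly the paper's operator $\Phi_q(\lambda)m_q(\lambda)\Phi_q^*(\overline\lambda)+\mathscr T(\lambda)$ written out as a kernel), deduce compactness and hence $(i)$--$(iii)$, and obtain $(iv)$ from the residue of the resolvent at $\lambda_j$ together with (\ref{KerRanPhi}). The only (harmless) difference is that you justify explicitly why non-real $\lambda$ are resolvent points via symmetry and (\ref{KerPhiRel}), a point the paper leaves implicit.
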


\begin{proof}
A direct verification shows that the operator $T_q$ is symmetric.
Take an arbitrary $\lambda$ such that the matrix $s(\lambda,q)$ is
non-singular, and let $f\in\mathbb H$. Then the function
$$
g(x)=[\mathscr T(\lambda)f](x):=\psi_q(x,\lambda)\int\limits_0^x
\varphi_q(t,\overline\lambda)^*f(t)dt+
\varphi_q(x,\lambda)\int\limits_x^1
\psi_q(t,\overline\lambda)^*f(t)dt
$$
belongs to the domain of differential expression $\mathfrak t_q$
and solves the Cauchy problem
$$
\mathfrak t_q(g)=\lambda g+f,\quad ag(0)=0,
$$
as can be directly verified using (\ref{PhiPsiRel}). A generic
solution of this problem takes the form
$h=\varphi_q(\cdot,\lambda)c+\mathscr T(\lambda) f$, $c\in\mathbb
C^r$. The choice
$$
c=m_q(\lambda)\int\limits_0^1
\varphi_q(t,\overline\lambda)^*f(t)dt
$$
gives that $ah(1)=0$, i.e. the boundary conditions
$h_1(0)=h_2(0)$, $h_1(1)=h_2(1)$ are satisfied. This implies that
$\lambda$ is a resolvent point of the operator $T_q$, and the
resolvent of $T_q$ is given by
$$
(T_q-\lambda\mathscr
I)^{-1}=\Phi_q(\lambda)m_q(\lambda)\Phi_q^*(\overline\lambda)+\mathscr
T(\lambda).
$$
Since $\mathscr T(\lambda)$ is a Hilbert--Schmidt operator, the
operator $T_q$ has a compact resolvent, and therefore the
statements $(i)-(iii)$ are proved.

Recall that $-\alpha_j(q)$ is a residue of the Weyl--Titchmarsh
function at the point $\lambda_j=\lambda_j(q)$, $j\in\mathbb Z$.
Taking $\varepsilon>0$ small enough we obtain that
$$
P_{j,q}=-\frac{1}{2\pi i}\oint\limits_{|\lambda-\lambda_j|
=\varepsilon} (T_q-\lambda\mathscr I)^{-1}d\zeta
=\Phi_q(\lambda_j) \alpha_j(q)\Phi_q^*(\lambda_j)
$$
for every $j\in\mathbb Z$.

By virtue of (\ref{KerRanPhi}) we obtain that $\alpha_j(q)\ge0$
for all $j\in\mathbb Z$, and the statement $(iv)$ is also proved.
\end{proof}

\subsection{Description of the spectral data: the necessity part}
Here we show that if $q\in\mathfrak Q$, then the spectral data
$\mathfrak a_q$ satisfy the conditions $(A_1)-(A_3)$, which is the
necessity part of Theorem \ref{Th1}.

\subsubsection{The condition $(A_1)$}
In the sequel we shall use the following notations. If
$(\lambda_{j})_{j\in\mathbb Z}$ is a strictly increasing sequence
of non-negative real numbers and $(\alpha_j)_{j\in\mathbb Z}$ is a
sequence in $M^+_r$, then
\begin{equation}\label{eq.215}
          \beta_n:= I-\sum\limits_{\lambda_k\in\Delta_n}\alpha_k, \qquad
          \widetilde\lambda_j:=\lambda_j-\pi n, \quad \lambda_j\in\Delta_n,\quad
          n\in\mathbb Z,
\end{equation}
with $\Delta_n$ being defined in Subsection \ref{subsec.12}.

We start from the condition $(A_1)$, which describes the
asymptotics of spectral data.

\begin{theorem}\label{a1th1}
Let $q\in\mathfrak Q.$  Then for the sequence $\mathfrak
a=\mathfrak a_q$ the condition $(A_1)$ holds.
\end{theorem}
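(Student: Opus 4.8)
The plan is to read off all three parts of $(A_1)$ from the analytic representation of the characteristic matrix. By Lemma~\ref{Th5}$(iii)$,
$$
s(\lambda,q)=s_0+\hat g_1(\lambda),\qquad c(\lambda,q)=c_0+\hat g_2(\lambda),
$$
where $s_0:=(\sin\lambda)I$, $c_0:=(\cos\lambda)I$, and $\hat g_k(\lambda):=\int_{-1}^1 e^{i\lambda t}g_k(t)\,dt$ with $g_1,g_2\in L_2((-1,1),M_r)$. By Plancherel the restriction of $\hat g_k$ to each horizontal line satisfies $\int_{\mathbb R}\|\hat g_k(\xi+iy)\|^2\,d\xi=2\pi\|e^{-yt}g_k\|_{L_2}^2$, bounded uniformly for $y$ in a compact set; and since every entry of $\hat g_k$ is entire, $\|\hat g_k\|^2$ is subharmonic. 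The sub--mean--value inequality over discs of fixed radius therefore bounds $\sup_{R_n}\|\hat g_k\|^2$ by a local $L_2$--integral on the rectangles $R_n:=\{\,|\Re\lambda-\pi n|\le\tfrac\pi2,\ |\Im\lambda|\le1\,\}$, and summing over $n$ (with bounded overlap) yields the one estimate on which everything rests:
$$
\sum_{n\in\mathbb Z}\varepsilon_n^2<\infty,\qquad \varepsilon_n:=\sup_{\lambda\in R_n}\bigl(\|\hat g_1(\lambda)\|+\|\hat g_2(\lambda)\|\bigr);
$$
in particular $\varepsilon_n\to0$.

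First I would count the eigenvalues. On the boundary $\Gamma_n:=\partial R_n$ one checks $|\sin\lambda|\ge1$, so $\|s_0^{-1}\hat g_1\|\le\varepsilon_n<1$ once $|n|$ is large, and the factorisation $\det s(\lambda,q)=(\sin\lambda)^r\det\!\bigl(I+s_0^{-1}\hat g_1(\lambda)\bigr)$ shows $\det s(\cdot,q)\neq0$ on $\Gamma_n$. Rouch\'e's theorem then gives exactly $r$ zeros (with multiplicity) of $\det s(\cdot,q)$ inside $\Gamma_n$, matching the $r$--fold zero of $(\sin\lambda)^r$ at $\pi n$. By Theorem~\ref{Th6} these zeros are precisely the eigenvalues and are real, hence all $r$ of them lie in $\Delta_n$; thus $\Delta_n$ carries at most $r$ distinct eigenvalues for large $|n|$. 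Since the spectrum is discrete, the finitely many remaining intervals contribute boundedly, giving the first assertion $\sup_n\sum_{\lambda_j\in\Delta_n}1=:N<\infty$.

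For the location estimate, fix $\lambda_j\in\Delta_n$; then $s(\lambda_j,q)v=0$ for some unit vector $v$, whence $|\sin\lambda_j|=\|\hat g_1(\lambda_j)v\|\le\|\hat g_1(\lambda_j)\|\le\varepsilon_n$. Writing $\sin\lambda_j=\sin\widetilde\lambda_j$ with $\widetilde\lambda_j$ real, $|\widetilde\lambda_j|\le\tfrac\pi2$, and using $|\sin z|\ge\tfrac2\pi|z|$ on that range, we get $|\widetilde\lambda_j|\le\tfrac\pi2\varepsilon_n$. Since each $\Delta_n$ carries at most $N$ eigenvalues,
$$
\sum_{n\in\mathbb Z}\sum_{\lambda_j\in\Delta_n}|\widetilde\lambda_j|^2\le\frac{\pi^2 N}{4}\sum_{n\in\mathbb Z}\varepsilon_n^2<\infty,
$$
which is the second assertion.

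For the norming matrices, recall $\alpha_k=\underset{\lambda=\lambda_k}\res\,s(\lambda,q)^{-1}c(\lambda,q)$, and that $s_0^{-1}c_0=\cot\lambda\,I$ has residue $I$ at $\pi n$. Encircling the eigenvalues of $\Delta_n$ by $\Gamma_n$ gives
$$
\beta_n=I-\sum_{\lambda_k\in\Delta_n}\alpha_k=\frac{1}{2\pi i}\oint_{\Gamma_n}\bigl(s_0^{-1}c_0-s(\lambda,q)^{-1}c(\lambda,q)\bigr)\,d\lambda.
$$
On $\Gamma_n$ the bound $\|s_0^{-1}\hat g_1\|\le\varepsilon_n<\tfrac12$ makes $s(\lambda,q)^{-1}$ uniformly bounded through a Neumann series, and the identity $s^{-1}c-s_0^{-1}c_0=s^{-1}\hat g_2-(\cot\lambda)\,s^{-1}\hat g_1$, together with $|\cot\lambda|\le\cosh1$ on $\Gamma_n$, bounds the integrand by $C\varepsilon_n$ pointwise. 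As $\Gamma_n$ has fixed perimeter, $\|\beta_n\|\le C'\varepsilon_n$ for all large $|n|$, and therefore $\sum_n\|\beta_n\|^2\le C'^2\sum_n\varepsilon_n^2<\infty$, the third assertion. The main obstacle is exactly the passage from decay to square--summability: Riemann--Lebesgue alone yields only $\varepsilon_n\to0$, whereas $(A_1)$ needs $\sum_n\varepsilon_n^2<\infty$, forcing one to treat $\hat g_1,\hat g_2$ as band--limited $L_2(\mathbb R)$ functions via Plancherel and subharmonicity. Folding the matrix residues into the single contour integral over $\Gamma_n$ is what keeps the last estimate elementary despite the possibly varying multiplicity of the zeros of $\det s(\cdot,q)$.
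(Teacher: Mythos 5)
Your proof is correct, and it follows the same overall strategy as the paper --- everything is read off from the sine-type representation \eqref{slambdarepr} of $s(\lambda,q)$ and $c(\lambda,q)$ --- but the paper only sketches this step: it cites \cite{zeros} for the zero asymptotics (which give the first two parts of $(A_1)$) and defers the estimate $\sum_n\|\beta_n\|^2<\infty$ entirely to \cite{sturm}. What you have written is essentially a self-contained version of what those references do, organized around the single envelope estimate $\sum_n\varepsilon_n^2<\infty$ obtained from Paley--Wiener/Plancherel on horizontal lines plus the sub-mean-value property; the Rouch\'e count on $\Gamma_n$, the bound $|\widetilde\lambda_j|\le\tfrac{\pi}{2}\varepsilon_n$, and the contour-integral representation of $\beta_n$ via $s^{-1}c-s_0^{-1}c_0=s^{-1}\hat g_2-(\cot\lambda)\,s^{-1}\hat g_1$ are all sound (on $\Gamma_n$ one indeed has $|\sin\lambda|\ge1$ and $|\cot\lambda|\le\coth 1$). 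Your argument buys the reader independence from the external lemmas at the cost of length, and it correctly identifies the real crux --- upgrading the Riemann--Lebesgue decay of $\hat g_k$ to square-summability of the suprema over the boxes $R_n$. The only cosmetic point worth tightening is the subharmonicity claim for the operator norm squared: it is cleanest to work with the Hilbert--Schmidt norm, whose square is a finite sum of $|(\hat g_k)_{ij}|^2$ with each summand subharmonic, and then pass to the equivalent operator norm.
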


\begin{proofsketch}
The proof of this theorem is analogous to the proof in
\cite{sturm}, and therefore we give here only its sketch. Let
$q\in\mathfrak Q$ and $\lambda_j=\lambda_j(q)$,
$\alpha_j=\alpha_j(q)$ for $j\in\mathbb Z$. The eigenvalues
$\lambda_j$ are zeros of the sine-type function $\lambda\mapsto
s(\lambda)$
 (see\eqref{slambdarepr}) that belongs
 to the following class of functions $ \mathbb C\to
 M_r$:
$$
        \mathcal{F}_f(\lambda):=\sin{\lambda}I +
        \int_{-1}^1 f(t)e^{i\lambda t}\,d t, \qquad
        \lambda\in\mathbb C,
$$
where $f\in L_2((-1,1),M_r)$. It is shown in \cite{zeros} that the
set of zeros of a function $\det \mathcal{F}_f$, with ${\mathcal
F}_f$ as above, can be indexed (counting multiplicities) by the
set $\mathbb Z$ so that the corresponding sequence
$(\omega_n)_{n\in\mathbb Z}$ of its zeros has the asymptotics
$$
         \omega_{kr+j}=\pi k +\widehat\omega_{j,k},\qquad
         k\in\mathbb{Z},\quad
         j=0,\dots,r-1,
$$
where the sequences $(\widehat\omega_{j,k})_{k\in\mathbb{Z}}$
belong to $\ell_2(\mathbb Z)$. Therefore,
\begin{equation}\label{eq.216}
  \sup\limits_{n\in\mathbb Z}\sum\limits_{\lambda_j\in\Delta_n}
           1<\infty, \qquad        \sum_{n\in\mathbb Z}\sum\limits_{\lambda_j\in\Delta_n}|
          \widetilde\lambda_j|^2<\infty,
\end{equation}
and thus it is left to prove only that (see (\ref{eq.215}))
$$
\sum\limits_{n=-\infty}^\infty\|\beta_n\|^2<\infty.
$$
It can be done in exactly the same way as in \cite{sturm}.
\end{proofsketch}

\subsubsection{The condition $(A_2)$}
We start from proving the following lemma, which is an analogue of
Lemma 2.12 in \cite{sturm}.

\begin{lemma}\label{Lemma3}
Assume that $q\in\mathfrak Q$, and let $\mathfrak a$ be a
collection satisfying the asymptotics $(A_1)$. For $j\in\mathbb Z$
set $\hat P_j:=\Phi_q(\lambda_j)\alpha_j\Phi_q^*(\lambda_j)$. Then
the series $\sum_{j\in\mathbb Z}\hat P_j$ converges in the strong
operator topology and
\begin{equation}\label{PSeries}
\sum\limits_{n=-\infty}^\infty\|P_{n,0}-\sum\limits_{\lambda_j\in\Delta_n}\hat
P_j\|^2<\infty.
\end{equation}
\end{lemma}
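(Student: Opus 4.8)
The plan is to exploit the factorization (\ref{PhiKrel}), which reduces every claim to the free building blocks $\hat P_j^0:=\Phi_0(\lambda_j)\alpha_j\Phi_0^*(\lambda_j)$. Since $\Phi_q(\lambda)=(\mathscr I+\mathscr K_q)\Phi_0(\lambda)$ with $\mathscr K_q$ a Volterra (hence bounded) operator, we have
$$
\hat P_j=(\mathscr I+\mathscr K_q)\,\hat P_j^0\,(\mathscr I+\mathscr K_q)^*,\qquad j\in\mathbb Z.
$$
Writing $S_N:=\sum_{|n|\le N}\sum_{\lambda_j\in\Delta_n}\hat P_j$ and $S_N^0$ for its free analogue, one gets $S_N=(\mathscr I+\mathscr K_q)S_N^0(\mathscr I+\mathscr K_q)^*$, so strong convergence of $\sum_j\hat P_j$ follows from that of $\sum_j\hat P_j^0$, and the proof of (\ref{PSeries}) is reduced to controlling the free differences $E_n^0:=\sum_{\lambda_j\in\Delta_n}\hat P_j^0-P_{n,0}$ together with a correction produced by moving $\mathscr K_q$ past $P_{n,0}$.

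I would first prove strong convergence of $\sum_j\hat P_j^0$. Each $\hat P_j^0$ is nonnegative (as $\alpha_j\in M_r^+$), so it suffices to bound the partial sums uniformly: for $M<N$ the operator $A:=S_N^0-S_M^0$ satisfies $0\le A\le S_N^0$, whence $\|Af\|^2\le\|A\|\,\langle Af,f\rangle\le\big(\sup_K\|S_K^0\|\big)\langle Af,f\rangle$, and the right-hand side is a tail of the convergent scalar series $\sum_j\langle\hat P_j^0 f,f\rangle$, giving the Cauchy property of $S_N^0f$. The uniform bound $\sup_K\|S_K^0\|<\infty$ comes from a standard Bessel-type estimate for the exponential system $\{e^{i\lambda_j t}d\}$ under $(A_1)$ (bounded count of $\lambda_j$ per $\Delta_n$ and $\ell_2$-closeness of $\lambda_j$ to $\pi n$, cf. \cite{sturm}), combined with $\sup_j\|\alpha_j\|<\infty$, which follows from $0\le\alpha_j\le I-\beta_n$ and $\sup_n\|\beta_n\|<\infty$.

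The heart of the matter is the free Hilbert--Schmidt estimate $\sum_n\|E_n^0\|_{\mathrm{HS}}^2<\infty$. Using the explicit form $\varphi_0(x,\lambda)=\tfrac{1}{i\sqrt2}(e^{i\lambda x}I,\ e^{-i\lambda x}I)^\top$, the kernel of $\hat P_j^0$ is a $2r\times2r$ block matrix built from $e^{\pm i\lambda_j(x\pm y)}\alpha_j$, and that of $P_{n,0}=\Phi_0(\pi n)\Phi_0^*(\pi n)$ is the same expression with $\lambda_j$ replaced by $\pi n$ and $\alpha_j$ by $I$. Factoring out $e^{\pm i\pi n(x\pm y)}$ and writing $e^{i\lambda_j(x\pm y)}=e^{i\pi n(x\pm y)}e^{i\widetilde\lambda_j(x\pm y)}$, each block of $E_n^0$ reduces to $\sum_{\lambda_j\in\Delta_n}(e^{i\widetilde\lambda_j(\cdot)}-1)\alpha_j-\beta_n$. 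Since $|e^{i\widetilde\lambda_j s}-1|\le|\widetilde\lambda_j|$ for $|s|\le2$ and the number of terms is uniformly bounded, this yields $\|E_n^0\|_{\mathrm{HS}}^2\le C\big(\sum_{\lambda_j\in\Delta_n}|\widetilde\lambda_j|^2+\|\beta_n\|^2\big)$; summing over $n$ and invoking all three parts of $(A_1)$ gives the claim. This is the step I expect to be the main obstacle, as it is where every part of $(A_1)$ enters and where intervals $\Delta_n$ carrying several eigenvalues must be handled with care.

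Finally I would assemble (\ref{PSeries}) from the decomposition
$$
\sum_{\lambda_j\in\Delta_n}\hat P_j-P_{n,0}=(\mathscr I+\mathscr K_q)\,E_n^0\,(\mathscr I+\mathscr K_q)^*+\big[\mathscr K_q P_{n,0}+P_{n,0}\mathscr K_q^*+\mathscr K_q P_{n,0}\mathscr K_q^*\big].
$$
The first term is $\ell_2$-summable in Hilbert--Schmidt norm by the preceding paragraph and the boundedness of $\mathscr I+\mathscr K_q$. For the bracket, the key observation is that $\mathscr K_q$ is Hilbert--Schmidt and $\sum_nP_{n,0}=\mathscr I$ (Theorem \ref{Th6}): by cyclicity of the trace and positivity,
$$
\sum_n\|\mathscr K_q P_{n,0}\|_{\mathrm{HS}}^2=\sum_n\operatorname{tr}(\mathscr K_q^*\mathscr K_q P_{n,0})=\operatorname{tr}(\mathscr K_q^*\mathscr K_q)=\|\mathscr K_q\|_{\mathrm{HS}}^2<\infty,
$$
and likewise for $P_{n,0}\mathscr K_q^*$, while $\|\mathscr K_q P_{n,0}\mathscr K_q^*\|_{\mathrm{HS}}\le\|\mathscr K_q\|\,\|P_{n,0}\mathscr K_q^*\|_{\mathrm{HS}}$ is $\ell_2$-summable as well. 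Since the operator norm is dominated by the Hilbert--Schmidt norm, this establishes (\ref{PSeries}) and completes the proof.
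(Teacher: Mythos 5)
Your proof is correct, and all the individual steps check out: the conjugation identity $\hat P_j=(\mathscr I+\mathscr K_q)\hat P_j^0(\mathscr I+\mathscr K_q)^*$, the algebraic decomposition of $\sum_{\lambda_j\in\Delta_n}\hat P_j-P_{n,0}$, the kernel computation for $E_n^0$ (each block indeed reduces to $e^{\pm i\pi n(x\pm y)}\bigl[\sum_{\lambda_j\in\Delta_n}(e^{\pm i\widetilde\lambda_j(x\pm y)}-1)\alpha_j-\beta_n\bigr]$, so all three parts of $(A_1)$ enter exactly as you say), and the trace identity $\sum_n\|\mathscr K_qP_{n,0}\|_{\mathrm{HS}}^2=\operatorname{tr}(\mathscr K_q^*\mathscr K_q)$. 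The route differs from the paper's in organization rather than in substance. The paper does not conjugate: it compares $\Phi_q(\lambda_j)$ directly with $\Phi_0(\pi n)$, proving the single quantitative estimate $\sum_n\sum_{\lambda_j\in\Delta_n}\|\Phi_q(\lambda_j)-\Phi_0(\pi n)\|^2<\infty$ (their Eq.~(\ref{PhiSeries})) by combining $\sum_n\|\mathscr K_q\Phi_0(\pi n)\|_{\mathrm{HS}}^2=\|\mathscr K_q\|_{\mathrm{HS}}^2$ --- your trace identity in disguise --- with the Lipschitz bound $\|\Phi_q(\lambda_j)-\Phi_q(\pi n)\|\le C|\widetilde\lambda_j|$, which plays the role of your $|e^{i\widetilde\lambda_js}-1|\le|\widetilde\lambda_j||s|$; the difference $P_{n,0}-\sum_{\lambda_j\in\Delta_n}\hat P_j$ is then split as $\Phi_0(\pi n)\beta_n\Phi_0^*(\pi n)$ plus a sum of differences of products, each controlled by (\ref{PhiSeries}). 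Your version buys an explicit, self-contained computation in the free case and cleanly isolated correction terms; the paper's version buys a shorter assembly, since one estimate on the $\Phi$'s feeds both the strong convergence and (\ref{PSeries}). The only place where you lean on an external fact the paper avoids is the uniform bound $\sup_K\|S_K^0\|<\infty$ via the Bessel property of $\{e^{i\lambda_jt}d\}$; the paper instead gets the pointwise bound $\sum_j\|\Phi_q^*(\lambda_j)f\|^2<\infty$ from (\ref{PhiSeries}) together with $\sum_n\|\Phi_0(\pi n)^*f\|^2=\|f\|^2$, which is marginally more economical but amounts to the same Bessel-type estimate.
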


\begin{proofsketch}
Let the assumptions of the present lemma hold, and let $\mathfrak
a=((\lambda_j,\alpha_j))_{j\in\mathbb Z}$. Using (\ref{PhiKrel})
and the fact that $\mathscr K_q$ is a Hilbert--Schmidt operator,
it can be observed that
$$
\sum\limits_{n\in\mathbb Z}\sum\limits_{\lambda_j\in
\Delta_n}\|\Phi_q(\pi n)-\Phi_0(\pi n)\|^2<\infty.
$$
Since $\|\Phi_q(\lambda_j)-\Phi_q(\pi n)\|\le
C|\widetilde\lambda_j|$ ($\lambda_j\in \Delta_n$, $n\in\mathbb Z$) for
some $C>0$,
\begin{equation}\label{PhiSeries}
\sum\limits_{n\in\mathbb Z}\sum\limits_{\lambda_j\in
\Delta_n}\|\Phi_q(\lambda_j)-\Phi_0(\pi n)\|^2<\infty.
\end{equation}
From ~\eqref{PhiSeries} we easily obtain that
\begin{equation}\label{PhiFSeries}
\sum\limits_{j\in\mathbb Z}\|\Phi_q^*(\lambda_j)f\|^2<\infty
\end{equation}
for all $f\in\mathbb H$. Indeed, it is enough to note that $
\sum_{n\in\mathbb Z}\|\Phi_0(\pi n)^*f\|^2= \|f\|^2$, $f\in\mathbb
H$, and that $\sup\limits_{n\in \mathbb
Z}\sum_{\lambda_j\in\Delta_n}1<\infty$.

Taking into account that the sequence $(\alpha_j)_{j\in\mathbb Z}$
is bounded we conclude that $ \sum_{j\in\mathbb
Z}\|\alpha_j\Phi^*(\lambda_j)f\|^2<\infty$, $f\in\mathbb H$.
Moreover, it can also be shown that for every sequence $c\in
l_2(\mathbb Z,\mathbb C^r)$ the series $\sum_{j\in\mathbb
Z}\Phi_q(\lambda_j)c_j$ is convergent, which justifies the
convergence of $\sum_{j\in\mathbb Z}\hat P_j$.

Now let us prove (\ref{PSeries}). Recall that $P_{n,0}=\Phi_0(\pi
n)\Phi_0^*(\pi n)$. By virtue of the definition of $\beta_n$ we
 obtain that
$$
P_{n,0}=\Phi_0(\pi n)\beta_n\Phi_0^*(\pi
n)+\sum\limits_{\lambda_j\in\Delta_n}\Phi_0(\pi
n)\alpha_j\Phi_0^*(\pi n),
$$
and thus we can write
$$
P_{n,0}-\sum\limits_{\lambda_j\in\Delta_n} \hat P_j=\Phi_0(\pi
n)\beta_n\Phi_0^*(\pi
n)+\sum\limits_{\lambda_j\in\Delta_n}[\Phi_0(\pi
n)\alpha_j\Phi_0^*(\pi
n)-\Phi_q(\lambda_j)\alpha_j\Phi_q^*(\lambda_j)].
$$
Thus, since the sequences $(\Phi_q(\lambda_j))$ and $(\alpha_j)$ are
bounded, we obtain that
$$
\|P_{n,0}-\sum\limits_{\lambda_j\in\Delta_n}\hat P_j\|^2\le
C_1\|\beta_n\|^2+C_2\sum\limits_{\lambda_j\in\Delta_n}\|\Phi_q(\lambda_j)-\Phi_0(\pi
n)\|^2,
$$
where $C_1$ and $C_2$ are non-negative constants independent of
$n$. Taking now into consideration (\ref{PhiSeries}) and $(A_1)$,
we obtain (\ref{PSeries}).
\end{proofsketch}

\noindent The following lemma is proved in \cite{sturm} (Lemma
B.1).

\begin{lemma}\label{Lemma4}
Suppose that H is a Hilbert space. Let $(P_n)_{n=1}^\infty$ and
$(G_n)_{n=1}^\infty$ be sequences of pairwise orthogonal
projectors of finite rank in $H$ such that $\sum_{n=1}^\infty
P_n=\sum_{n=1}^\infty G_n=\mathscr I_H$, where $\mathscr I_H$ is
the identity operator in $H$, and let
$\sum_{n=1}^\infty\|P_n-G_n\|^2<\infty$. Then there exists
$N_0\in\mathbb N$ such that for all $N\ge N_0$,
$$
\sum\limits_{n=1}^N \rank P_n=\sum\limits_{n=1}^N \rank G_n.
$$
\end{lemma}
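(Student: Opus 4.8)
The plan is to pass from the individual projectors to their partial sums. Set $S_N:=\sum_{n=1}^N P_n$ and $T_N:=\sum_{n=1}^N G_n$. Since the $P_n$ (respectively the $G_n$) are pairwise orthogonal projectors, $S_N$ and $T_N$ are themselves orthogonal projectors, and because ranks add over mutually orthogonal pieces we have $\rank S_N=\sum_{n=1}^N\rank P_n$ and $\rank T_N=\sum_{n=1}^N\rank G_n$. Thus the assertion is exactly that $\rank S_N=\rank T_N$ for all large $N$. I would prove this by establishing the two inequalities $\rank S_N\le\rank T_N$ and $\rank T_N\le\rank S_N$ separately, each by an injectivity argument in the spirit of comparing nearby projections: if the restriction of $T_N$ to $\Ran S_N$ is injective, then $\dim\Ran S_N\le\dim\Ran T_N$, and symmetrically.

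The heart of the matter is therefore a norm estimate showing that, for large $N$, $T_N$ annihilates no nonzero vector of $\Ran S_N$. Write $Q_N:=\mathscr I_H-T_N=\sum_{n>N}G_n$ and $R_N:=\mathscr I_H-S_N=\sum_{n>N}P_n$ for the complementary (tail) projectors. For $x\in\Ran S_N$ we have $\|T_N x\|^2=\|x\|^2-\|Q_N x\|^2$, so injectivity of $T_N$ on $\Ran S_N$ follows as soon as $\|Q_N S_N\|<1$. The key is the bound
\begin{equation*}
\|Q_N S_N\|^2\le\sum_{n>N}\|P_n-G_n\|^2,
\end{equation*}
together with its symmetric counterpart $\|R_N T_N\|^2\le\sum_{n>N}\|P_n-G_n\|^2$ (obtained by interchanging the roles of the two families).

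To derive this bound I would fix a unit vector $x$ and put $u:=S_N x\in\Ran S_N$. Since $u$ lies in the span of $\Ran P_1,\dots,\Ran P_N$, we have $P_n u=0$ for every $n>N$, and hence $G_n u=(G_n-P_n)u$ for all such $n$. The ranges of the $G_n$ are mutually orthogonal, so $\|Q_N u\|^2=\sum_{n>N}\|G_n u\|^2=\sum_{n>N}\|(G_n-P_n)u\|^2\le\bigl(\sum_{n>N}\|P_n-G_n\|^2\bigr)\|u\|^2$, which yields the displayed estimate. The crucial manoeuvre — and the step I expect to be easiest to get wrong — is to apply $S_N$ \emph{first}: this forces $P_n u=0$ on the tail and converts each $G_n u$ into $(G_n-P_n)u$, so that the orthogonality of the $G_n$ produces the \emph{tail} sum $\sum_{n>N}\|P_n-G_n\|^2$ rather than the useless head sum $\sum_{n\le N}\|P_n-G_n\|^2$. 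The naive estimate, which bounds $\|Q_N S_N\|$ by a partial sum that does not tend to zero, is exactly the trap to avoid.

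Finally I would choose $N_0$ so that $\sum_{n>N_0}\|P_n-G_n\|^2<1$, which is possible because $\sum_n\|P_n-G_n\|^2<\infty$. Then for every $N\ge N_0$ both $\|Q_N S_N\|<1$ and $\|R_N T_N\|<1$, so $T_N$ is injective on $\Ran S_N$ and $S_N$ is injective on $\Ran T_N$. The first injection gives $\rank S_N\le\rank T_N$ and the second gives $\rank T_N\le\rank S_N$, whence equality holds for all $N\ge N_0$, which is the desired conclusion.
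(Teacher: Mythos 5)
Your proof is correct. There is nothing in the paper to compare it against: the authors do not prove this lemma at all, but quote it from \cite{sturm} (Lemma B.1 there). Your argument is self-contained and sound. The reduction to $\rank S_N=\rank T_N$ via additivity of rank over orthogonal ranges is fine; the decisive step is applying $S_N$ first, so that $u=S_Nx$ satisfies $P_nu=0$ and hence $G_nu=(G_n-P_n)u$ for every $n>N$, whereupon mutual orthogonality of the ranges of the $G_n$ gives $\|Q_Nu\|^2=\sum_{n>N}\|(G_n-P_n)u\|^2\le\bigl(\sum_{n>N}\|P_n-G_n\|^2\bigr)\|u\|^2$ --- i.e.\ the tail sum, which tends to zero, rather than the head sum, which does not. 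Choosing $N_0$ with $\sum_{n>N_0}\|P_n-G_n\|^2<1$ then makes $T_N$ injective on $\Ran S_N$ and, by the symmetric estimate, $S_N$ injective on $\Ran T_N$ for all $N\ge N_0$, yielding both inequalities between the ranks and hence equality. This is exactly the kind of perturbation argument one would expect for the cited Lemma B.1, and I see no gap.
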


We use Lemmas \ref{Lemma3} and \ref{Lemma4} to prove $(A_2)$. If
$\mathfrak a=\mathfrak a_q$, then the operators $\hat P_j$,
$j\in\mathbb Z$, from Lemma \ref{Lemma3} coincide with the
orthogonal projectors $P_{j,q}$ corresponding to the eigenvalues
$\lambda_j$ (see Theorem \ref{Th6}). Since $\{P_{j,q}\}$,
$j\in\mathbb Z$, forms a complete system of orthogonal projectors,
by virtue of Lemmas \ref{Lemma3} and \ref{Lemma4} we justify that
$$
\sum\limits_{n=-N}^N \rank P_{n,0}=\sum\limits_{n=-N}^N
\sum\limits_{\lambda_j\in\Delta_n} \rank P_{j,q}
$$
for $N\ge N_0$. Taking into consideration (\ref{KerRanPhi}), we
obtain that $\rank P_{j,q}=\rank \alpha_j$ and $\rank P_{n,0}=r$
for all $j,n\in\mathbb Z$, and thus we justify that the condition
$\mathrm{(A_2)}$ is satisfied.

\subsubsection{The operators $\mathscr U_{\mathfrak a,q}$}
Before proving the condition $(A_3)$ we have to introduce some
operators that play an important role below.

Let $q\in\mathfrak Q$, and let $\mathfrak
a=((\lambda_j,\alpha_j))_{j\in\mathbb Z}$ be any collection
satisfying the asymptotics $(A_1)$. Construct the operator
$\mathscr U_{\mathfrak a,q}:\mathbb H\to\mathbb H$ by the formula
\begin{equation}\label{UaqDef}
\mathscr U_{\mathfrak a,q}:=\sum\limits_{j\in\mathbb
Z}\Phi_q(\lambda_j)\alpha_j\Phi_q^*(\lambda_j).
\end{equation}
By virtue of Lemma \ref{Lemma3} the operator $\mathscr
U_{\mathfrak a,q}$ is continuous, and, since $\alpha_j\ge0$ for
all $j\in\mathbb Z$ (see Theorem \ref{Th6}), it is also
non-negative.

In particular,
\begin{equation}\label{UaqI}
\mathscr U_{\mathfrak a_q,q}=\mathscr I,
\end{equation}
as follows from Theorem \ref{Th6}.

Now we are going to show that the operator $\mathscr U_{\mathfrak
a,q}$ is the sum of the identity one and a compact one. We start
from proving the following lemma.

\begin{lemma}\label{LemmaHconv}
Let $\mathfrak a$ be any collection satisfying the condition
$(A_1)$. Then the limit (\ref{HMudef}) exists in the topology of
the space $L_2((-1,1),M_r)$, and the following relation holds:
\begin{equation}\label{HadjReal}
H_\mu(x)^*=H_\mu(-x).
\end{equation}
\end{lemma}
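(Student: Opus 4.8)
The plan is to work with the truncated integrals
$$
\Sigma_n(x):=\int_{-\pi\left(n-\frac12\right)}^{\pi\left(n+\frac12\right)}e^{2i\lambda x}\,d(\mu-\mu_0)(\lambda),\qquad n\in\mathbb N ,
$$
and to prove that they form a Cauchy sequence in $L_2((-1,1),M_r)$. Since $(-\pi(n-\tfrac12),\pi(n+\tfrac12)]=\bigcup_{k=-n+1}^{n}\Delta_k$ and $\pi k$ is the only point of the support of $\mu_0$ lying in $\Delta_k$, I first regroup the integral by the intervals $\Delta_k$. Writing $e^{2i\lambda_j x}=e^{2\pi i kx}e^{2i\widetilde\lambda_j x}$ for $\lambda_j\in\Delta_k$ (the notation of (\ref{eq.215})) gives $\Sigma_n=\sum_{k=-n+1}^{n}\phi_k g_k$, where $\phi_k(x):=e^{2\pi i kx}$ and $g_k(x):=\sum_{\lambda_j\in\Delta_k}\alpha_j e^{2i\widetilde\lambda_j x}-I$. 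The symmetry relation is then essentially free: for each $n$ the measure $\mu-\mu_0$ is Hermitian--matrix valued, so $\Sigma_n(x)^*=\int e^{-2i\lambda x}\,d(\mu-\mu_0)(\lambda)=\Sigma_n(-x)$ identically; once convergence is established I pass to the limit, using that both $g\mapsto g^*$ and $g(x)\mapsto g(-x)$ are continuous on $L_2((-1,1),M_r)$, to obtain $H_\mu(x)^*=H_\mu(-x)$.

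Next I split $g_k=a_k-\beta_k$ with $a_k(x):=\sum_{\lambda_j\in\Delta_k}\alpha_j(e^{2i\widetilde\lambda_j x}-1)$ and the constant matrix $\beta_k=I-\sum_{\lambda_j\in\Delta_k}\alpha_j$. The contribution $-\sum_k\beta_k\phi_k$ is immediate: the $\phi_k$ are orthogonal in $L_2(-1,1)$ with $\|\phi_k\|^2=2$, so, working with the Hilbert--Schmidt norm on $M_r$ (all norms being equivalent), the partial sums over any finite set $A$ satisfy $\|\sum_{k\in A}\beta_k\phi_k\|^2=2\sum_{k\in A}\|\beta_k\|^2$, which tends to $0$ by the third part of $(A_1)$. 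For the term $\sum_k\phi_k a_k$, the bound $|e^{2i\widetilde\lambda_j x}-1|\le 2|\widetilde\lambda_j|$ on $(-1,1)$ yields $\|a_k\|_{L_2}\le C\sum_{\lambda_j\in\Delta_k}|\widetilde\lambda_j|\le C'(\sum_{\lambda_j\in\Delta_k}|\widetilde\lambda_j|^2)^{1/2}$, where I use that $(\alpha_j)$ is bounded (since $\|\alpha_j\|\le\|I-\beta_k\|\le 1+\|\beta_k\|$) and that the number of $\lambda_j$ in each $\Delta_k$ is bounded (first part of $(A_1)$). Hence $\sum_k\|a_k\|^2\le C''\sum_k\sum_{\lambda_j\in\Delta_k}|\widetilde\lambda_j|^2<\infty$ by the second part of $(A_1)$.

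The crux --- and the main obstacle --- is to pass from $\sum_k\|a_k\|^2<\infty$ to convergence of $\sum_k\phi_k a_k$, i.e. to establish a Bessel-type inequality $\|\sum_{k\in A}\phi_k a_k\|^2\le C\sum_{k\in A}\|a_k\|^2$. A computation expresses the matrix-valued cross term $\int_{-1}^1\phi_k(x)\overline{\phi_l(x)}\,a_k(x)a_l(x)^*\,dx$ as $\sum_{j,j'}\alpha_j\,B(2\widetilde\lambda_j,2\widetilde\lambda_{j'})\,\alpha_{j'}$, where the scalar $B$ is the second mixed difference $B(u,w)=\psi(u+w)-\psi(u)-\psi(w)+\psi(0)$ of the shifted cardinal sine $\psi(t)=2\sin(D+t)/(D+t)$ with $D=2\pi(k-l)$, so that $B(u,w)=\int_0^u\int_0^w\psi''(\sigma+\tau)\,d\sigma\,d\tau$. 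This exhibits the essential cancellation $B=O(|u||w|)$, but since $\sup|\psi''|$ decays only like $|k-l|^{-1}$, the resulting estimate $|\,{\cdot}\,|\le C|\widetilde\lambda_j||\widetilde\lambda_{j'}|/|k-l|$ is not absolutely summable in $k,l$; thus the Bessel bound cannot come from the triangle inequality and genuinely requires the Hilbert-space (almost-orthogonality, Riesz-basis) structure, namely the stability of the near-integer exponential system $\{e^{2i\lambda_j x}\}$ on $(-1,1)$, exactly as in \cite{sturm} (building on \cite{zeros}).

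I expect the cleanest way around this obstacle is to import the required bound from Lemma \ref{Lemma3}. Indeed the boundedness of $\mathscr U_{\mathfrak a,0}=\sum_j\Phi_0(\lambda_j)\alpha_j\Phi_0^*(\lambda_j)$ furnished there gives $\sum_j\|\alpha_j^{1/2}\Phi_0^*(\lambda_j)f\|^2=\langle\mathscr U_{\mathfrak a,0}f,f\rangle\le\|\mathscr U_{\mathfrak a,0}\|\,\|f\|^2$, which is precisely the frame bound behind the desired Bessel inequality; moreover the estimate $\sum_n\|P_{n,0}-\sum_{\lambda_j\in\Delta_n}\hat P_j\|^2<\infty$ together with the uniform finite rank of these operators shows $\mathscr U_{\mathfrak a,0}-\mathscr I$ to be Hilbert--Schmidt, and a direct computation of its kernel $\sum_j\varphi_0(x,\lambda_j)\alpha_j\varphi_0(t,\lambda_j)^*-\sum_m\varphi_0(x,\pi m)\varphi_0(t,\pi m)^*$ shows that its $(1,2)$-block equals, after the substitution $y=(x+t)/2$, the function $\tfrac12 H_\mu(y)$. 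Granting the Bessel bound, the tails $\|\sum_{k\in A}\phi_k a_k\|^2\le C\sum_{k\in A}\|a_k\|^2\to0$ show $\sum_k\phi_k a_k$ is Cauchy; combined with the $\beta$-part this proves $\Sigma_n\to H_\mu$ in $L_2((-1,1),M_r)$, and the symmetry relation $H_\mu(x)^*=H_\mu(-x)$ then follows by passing to the limit as in the first paragraph.
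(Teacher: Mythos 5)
Your setup---regrouping the truncated integral by the intervals $\Delta_k$, disposing of the $\beta_k$-part by orthogonality of $\{e^{2\pi ikx}\}$, the bound $\sum_k\|a_k\|_{L_2}^2<\infty$, and the passage to the limit for the symmetry relation (\ref{HadjReal})---is correct and coincides with the paper's. The gap is exactly where you flag it: the Bessel-type inequality $\|\sum_{k\in A}\phi_k a_k\|^2\le C\sum_{k\in A}\|a_k\|^2$ is never established, and in the generality in which you state it (orthogonal $\phi_k$, arbitrary $a_k\in L_2$) it is simply false: take $a_k=\overline{\phi_k}\,b_k$ with constant $b_k$, so that $\phi_k a_k=b_k$ and the left-hand side is $2\|\sum_{k\in A}b_k\|^2$. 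The proposed rescue via Lemma \ref{Lemma3} does not close the gap either. From $\sum_n\|P_{n,0}-\sum_{\lambda_j\in\Delta_n}\hat P_j\|^2<\infty$ and the uniform bound on the ranks you get $\ell_2$-summability of the Hilbert--Schmidt norms of the blocks $D_n:=\sum_{\lambda_j\in\Delta_n}\hat P_j-P_{n,0}$, but these blocks are not orthogonal in the Hilbert--Schmidt inner product, so it does not follow that $\sum_n D_n=\mathscr U_{\mathfrak a,0}-\mathscr I$ is Hilbert--Schmidt; and even granting that, identifying the $(1,2)$-block of its kernel with $\tfrac12 H_\mu$ presupposes exactly the $L_2$-convergence of (\ref{HMudef}) you are trying to prove.

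The paper avoids the obstacle by expanding one order further. Write $e^{2i\widetilde\lambda_j x}-1=2i\widetilde\lambda_j x+\bigl(e^{2i\widetilde\lambda_j x}-1-2i\widetilde\lambda_j x\bigr)$, so that $a_k(x)=x\,\eta_k+\gamma_k(x)$ with the \emph{constant} matrix $\eta_k:=\sum_{\lambda_j\in\Delta_k}2i\widetilde\lambda_j\alpha_j$ and a remainder satisfying $\sup_x\|\gamma_k(x)\|\le C\sum_{\lambda_j\in\Delta_k}|\widetilde\lambda_j|^2$. The series $\sum_k\phi_k\gamma_k$ then converges absolutely in the uniform norm (hence in $L_2$) by the second part of $(A_1)$, with no orthogonality needed; and $\sum_k\phi_k(x)\,x\,\eta_k=x\sum_k\phi_k(x)\eta_k$ is the image, under the bounded operator of multiplication by $x$, of a series with constant coefficients over the orthogonal system $\{\phi_k\}$, which converges because $\|\eta_k\|^2\le C\sum_{\lambda_j\in\Delta_k}|\widetilde\lambda_j|^2$ by Cauchy--Schwarz and the first part of $(A_1)$. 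This is precisely the decomposition (\ref{eq.226}) in the paper; with it, your argument closes and the rest of your write-up stands.
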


\begin{proof}
Taking into consideration the definitions of measures $\mu$ and
$\mu_0$ it is easy to observe that the function $H:=H_\mu$ can be
rewritten as
\begin{equation}\label{HSeries}
H(x)=\sum\limits_{n\in\mathbb
Z}\left\{\left(\sum\limits_{\lambda_j\in\Delta_n}e^{2i\lambda_jx}\alpha_j\right)-e^{2i\pi
nx}I\right\},
\end{equation}
and thus we have to show that the series (\ref{HSeries}) is
convergent in $L_2((-1,1),M_r)$.

Note that
\begin{equation}\label{eq.226}
\left(\sum\limits_{\lambda_j\in\Delta_n}e^{2i\lambda_jx}\alpha_j\right)-e^{2i\pi
nx}I= e^{2i\pi nx}\gamma_n(x) +x e^{2i\pi nx}\eta_n-e^{2i\pi
nx}\beta_n,
\end{equation}
where
$$   \gamma_n(x):=\sum\limits_{\lambda_j\in\Delta_n}
(e^{2i\widetilde\lambda_jx}-1-2i\widetilde\lambda_jx)\alpha_j,
\qquad \eta_n:=\sum\limits_{\lambda_j\in\Delta_n}
2i\widetilde\lambda_j\alpha_j,
$$
$\beta_n$ and $\widetilde\lambda_j$ are given by (\ref{eq.215}).
Since the sequence $(\alpha_j)_{j\in\mathbb Z}$ is bounded and
$|e^z-1-z|\le|z|^2e^{|z|}$, $z\in\mathbb C$, in view of the
condition $\mathrm{(A_1)}$ we obtain that
$$  \sum\limits_{n\in\mathbb Z} \sup_{x\in[0,1]}
\|\gamma_n(x)\|< \infty, \qquad \sum\limits_{n\in\mathbb Z}
\|\eta_n\|^2 <\infty, \qquad\sum\limits_{n\in\mathbb Z}
\|\beta_n\|^2<\infty.
$$
Therefore, taking into consideration (\ref{eq.226}) it is easy to
observe that the series (\ref{HSeries}) is convergent in the
topology of the space $L_2((-1,1),M_r).$

The relation (\ref{HadjReal}) follows directly from the formula
(\ref{HSeries}).
\end{proof}

For $H\in L_2((-1,1),M_r)$ denote
\begin{equation}\label{FHdef}
F_H(x,t):=\frac{1}{2}\begin{pmatrix}H\left(\frac{x-t}{2}\right)&H\left(\frac{x+t}{2}\right)\\
H^\sharp\left(\frac{x+t}{2}\right)&H^\sharp\left(\frac{x-t}{2}\right)\end{pmatrix},
\end{equation}
where $H^\sharp(x):=H(-x)$. Note that $F_H\in G_2(M_{2r})$.

\begin{proposition}
Let $\mathfrak a$ be any collection satisfying the asymptotics
$(A_1)$, and set $\mu:=\mu^{\mathfrak a}$, $H:=H_\mu$. Then
\begin{equation}\label{UF}
\mathscr U_{\mathfrak a,0}=\mathscr I+\mathscr F_H,
\end{equation}
where $\mathscr F_H$ is a Hilbert--Schmidt operator with kernel
$F_H$, i.e.
$$
(\mathscr F_Hf)(x)=\int\limits_0^1F_H(x,s)f(s)ds,\quad f\in
L_2((0,1),\mathbb C^{2r}).
$$
\end{proposition}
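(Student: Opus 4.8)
The plan is to compute the integral kernel of each summand in (\ref{UaqDef}) for $q=0$ explicitly, to represent the identity operator through the analogous kernels at the free frequencies $\pi n$, and to recognise that the difference is precisely the operator with kernel $F_H$; the only genuine work is to organise the otherwise non-convergent term-by-term subtraction through the grouping over the intervals $\Delta_n$.

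First I would write out $\varphi_0$ explicitly. From (\ref{Phi0}), (\ref{PhiPsiDef}) and $a=\tfrac{1}{\sqrt2}(I,-I)$ one gets
$$
\varphi_0(x,\lambda)=u_0(x,\lambda)\vartheta a^*=\frac{1}{i\sqrt2}\begin{pmatrix}e^{i\lambda x}I\\ e^{-i\lambda x}I\end{pmatrix}.
$$
Since every $\lambda_j$ is real, the operator $\Phi_0(\lambda_j)\alpha_j\Phi_0^*(\lambda_j)$ from (\ref{PhiOperDef}) is the integral operator on $\mathbb H$ with $2r\times2r$ kernel $\varphi_0(x,\lambda_j)\alpha_j\varphi_0(t,\lambda_j)^*$, and multiplying the blocks gives
$$
\varphi_0(x,\lambda)\alpha\varphi_0(t,\lambda)^*=\frac12\begin{pmatrix}e^{i\lambda(x-t)}\alpha&e^{i\lambda(x+t)}\alpha\\ e^{-i\lambda(x+t)}\alpha&e^{-i\lambda(x-t)}\alpha\end{pmatrix}.
$$

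Next I would supply the subtracted identity. By (\ref{UaqI}) with $q=0$ applied to the spectral data of $T_0$ (eigenvalues $\pi n$, norming matrices $I$) one has $\mathscr I=\sum_n P_{n,0}$ with $P_{n,0}=\Phi_0(\pi n)\Phi_0^*(\pi n)$, whose kernel is the same matrix taken at $\lambda=\pi n$, $\alpha=I$. Grouping the summands of $\mathscr U_{\mathfrak a,0}$ according to the intervals $\Delta_n$ and pairing the $n$-th group with $P_{n,0}$, the $(1,1)$-block of the kernel of $\mathscr U_{\mathfrak a,0}-\mathscr I$ becomes $\tfrac12\sum_n\bigl(\sum_{\lambda_j\in\Delta_n}e^{i\lambda_j(x-t)}\alpha_j-e^{i\pi n(x-t)}I\bigr)$, and similarly for the other three blocks. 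Recalling from (\ref{HSeries}) that $H(\xi)=\sum_j e^{2i\lambda_j\xi}\alpha_j-\sum_n e^{2i\pi n\xi}I$, the substitutions $\xi=\tfrac{x\mp t}{2}$ together with $H^\sharp(\xi)=H(-\xi)$ identify these four blocks with the entries of $F_H$ in (\ref{FHdef}).

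The main obstacle is making this term-by-term computation rigorous, since neither $\sum_j e^{i\lambda_j(\cdot)}\alpha_j$ nor its $\pi n$-counterpart converges separately. I would argue through the grouping just described: by Lemma \ref{Lemma3} (with $q=0$) the partial sums $S_N:=\sum_{n=-N}^N\sum_{\lambda_j\in\Delta_n}\Phi_0(\lambda_j)\alpha_j\Phi_0^*(\lambda_j)$ converge to $\mathscr U_{\mathfrak a,0}$ strongly, while $\sum_{n=-N}^N P_{n,0}\to\mathscr I$ strongly (Theorem \ref{Th6}$(iii)$ for $q=0$). Each difference $S_N-\sum_{n=-N}^N P_{n,0}$ is the Hilbert--Schmidt operator $\mathscr F_{H_N}$ with kernel $F_{H_N}$, where $H_N$ is the $N$-th symmetric partial sum of (\ref{HSeries}). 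By Lemma \ref{LemmaHconv}, $H_N\to H$ in $L_2((-1,1),M_r)$, and since the linear map $H\mapsto F_H$ is bounded from $L_2((-1,1),M_r)$ into $G_2(M_{2r})$ (a mere change of variables), $\mathscr F_{H_N}\to\mathscr F_H$ in the Hilbert--Schmidt norm, hence strongly. Uniqueness of strong limits gives $\mathscr U_{\mathfrak a,0}-\mathscr I=\mathscr F_H$; and because $H\in L_2((-1,1),M_r)$ the kernel $F_H$ lies in $G_2(M_{2r})$, so $\mathscr F_H$ is Hilbert--Schmidt, which proves (\ref{UF}).
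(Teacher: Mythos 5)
Your proof is correct and is exactly the ``direct verification'' that the paper leaves to the reader: the kernel computation for $\varphi_0(x,\lambda)\alpha\varphi_0(t,\lambda)^*$ matches $F_H$ block by block, and your handling of the non-absolutely-convergent subtraction via the $\Delta_n$-grouping, Lemma \ref{Lemma3}, Lemma \ref{LemmaHconv} and the boundedness of $H\mapsto\mathscr F_H$ is a sound way to make the term-by-term identification rigorous. No gaps.
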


\begin{proof}
The proof can be obtained by direct verification.
\end{proof}

\subsubsection{The condition $(A_3)$}
Now let us prove that for all $q\in\mathfrak Q$ the spectral data
$\mathfrak a_q$ satisfy the condition $(A_3)$. In view of
\eqref{UaqI}, this fact directly follows from the following lemma.

\begin{lemma}\label{A3Lemma}
Let $q\in\mathfrak Q$, and let $\mathfrak
a=((\lambda_j,\alpha_j))_{j\in\mathbb Z}$ be any collection
satisfying the asymptotics $(A_1)$. Then
\begin{equation}\label{A3Equiv}
(A_3)\Longleftrightarrow \mathscr U_{\mathfrak a,q}>0.
\end{equation}
\end{lemma}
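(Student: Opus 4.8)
The plan is to reduce the equivalence to a completeness statement for the concrete system generated by the boundary solutions and then to transfer that statement, via the free case $q=0$, to the exponential system appearing in $(A_3)$.

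First I would use that $\mathscr U_{\mathfrak a,q}\ge0$, so that $\mathscr U_{\mathfrak a,q}>0$ is equivalent to $\ker\mathscr U_{\mathfrak a,q}=\{0\}$. Writing $\alpha_j=\alpha_j^{1/2}\alpha_j^{1/2}$ in \eqref{UaqDef} gives the quadratic form
$$
\langle\mathscr U_{\mathfrak a,q}f,f\rangle=\sum_{j\in\mathbb Z}\|\alpha_j^{1/2}\Phi_q^*(\lambda_j)f\|^2,\qquad f\in\mathbb H,
$$
the partial sums converging strongly by Lemma \ref{Lemma3}. Hence $f\in\ker\mathscr U_{\mathfrak a,q}$ iff $\alpha_j\Phi_q^*(\lambda_j)f=0$ for every $j$, i.e. iff $f$ is orthogonal in $\mathbb H$ to every vector $\varphi_q(\cdot,\lambda_j)d$ with $d\in\Ran\alpha_j$. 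Consequently $\mathscr U_{\mathfrak a,q}>0$ holds if and only if the system
$$
\Sigma_q:=\{\varphi_q(\cdot,\lambda_j)d\ |\ j\in\mathbb Z,\ d\in\Ran\alpha_j\}
$$
is complete in $\mathbb H$.

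Next I would reduce to the free case. By \eqref{PhiKrel} we have $\Phi_q(\lambda_j)=(\mathscr I+\mathscr K_q)\Phi_0(\lambda_j)$, and since $\mathscr K_q$ is a Volterra operator the factor $\mathscr I+\mathscr K_q$ is boundedly invertible. Pulling these bounded factors out of the strongly convergent series in \eqref{UaqDef} yields
$$
\mathscr U_{\mathfrak a,q}=(\mathscr I+\mathscr K_q)\,\mathscr U_{\mathfrak a,0}\,(\mathscr I+\mathscr K_q)^*,
$$
so that $\ker\mathscr U_{\mathfrak a,q}=\big((\mathscr I+\mathscr K_q)^*\big)^{-1}\ker\mathscr U_{\mathfrak a,0}$; as $(\mathscr I+\mathscr K_q)^*$ is a bijection, $\mathscr U_{\mathfrak a,q}>0$ if and only if $\mathscr U_{\mathfrak a,0}>0$. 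It therefore suffices to treat $q=0$. From \eqref{Phi0} and \eqref{PhiPsiDef} one computes explicitly
$$
\varphi_0(x,\lambda)=\tfrac{1}{i\sqrt2}\begin{pmatrix}e^{i\lambda x}I\\ e^{-i\lambda x}I\end{pmatrix},
$$
so $\varphi_0(\cdot,\lambda_j)d=\tfrac{1}{i\sqrt2}(e^{i\lambda_j x}d,\,e^{-i\lambda_j x}d)^\top$. I would introduce the unitary operator $J:\mathbb H\to L_2((-1,1),\mathbb C^r)$ sending $f=(f_1,f_2)$ to the function equal to $f_1(t)$ on $(0,1)$ and to $f_2(-t)$ on $(-1,0)$; a change of variables then shows, for real $\lambda_j$,
$$
\langle f,\varphi_0(\cdot,\lambda_j)d\rangle_{\mathbb H}=\overline{(i\sqrt2)^{-1}}\,\langle Jf,\,d\,e^{i\lambda_j\cdot}\rangle_{L_2((-1,1),\mathbb C^r)}.
$$
Thus $f$ annihilates $\Sigma_0$ precisely when $Jf$ annihilates $\{d\,e^{i\lambda_j t}\}$, and since $J$ is unitary the two completeness properties are equivalent. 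Combining this with the previous reduction gives $(A_3)\Leftrightarrow\mathscr U_{\mathfrak a,0}>0\Leftrightarrow\mathscr U_{\mathfrak a,q}>0$.

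The only genuinely delicate point is the $q=0$ identification: the reflection built into $J$ must be chosen so that the two half-interval integrals assemble into a single integral over $(-1,1)$ carrying the exponent $e^{i\lambda_j t}$ with the correct sign, after which the equivalence is a bijective change of variables. Everything in the first two paragraphs is formal, resting only on $\alpha_j\ge0$ and the bounded invertibility of $\mathscr I+\mathscr K_q$.
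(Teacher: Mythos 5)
Your proof is correct and follows essentially the same route as the paper's: the congruence $\mathscr U_{\mathfrak a,q}=(\mathscr I+\mathscr K_q)\mathscr U_{\mathfrak a,0}(\mathscr I+\mathscr K_q^*)$ reduces everything to $q=0$, after which a unitary reflection between $L_2((-1,1),\mathbb C^r)$ and $\mathbb H$ identifies $\ker\mathscr U_{\mathfrak a,0}$ with the orthogonal complement of the exponential system in $(A_3)$ (the paper uses the inverse map $U$, which is immaterial). The only step to make explicit is why $\mathscr U_{\mathfrak a,q}>0$ is equivalent to $\ker\mathscr U_{\mathfrak a,q}=\{0\}$: non-negativity alone does not give this in infinite dimensions when $>0$ is read as uniform positivity, and the paper obtains it from the representation $\mathscr U_{\mathfrak a,0}=\mathscr I+\mathscr F_H$ with $\mathscr F_H$ compact, which your argument never invokes.
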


\begin{proof}
Taking into consideration the relation (\ref{PhiKrel}), we obtain
that
\begin{equation}\label{UaqUa0}
\mathscr U_{\mathfrak a,q}=(\mathscr I+\mathscr K_q)\mathscr
U_{\mathfrak a,0}(\mathscr I+\mathscr K_q^*).
\end{equation}
Since the operator $\mathscr I+\mathscr K_q$ is a homeomorphism of
the space $\mathbb H$, it is enough to prove the equivalence
(\ref{A3Equiv}) only for the case $q=0$.

Since $\mathscr U_{\mathfrak a,0}\ge0$ and the operator $\mathscr
F_H$ in (\ref{UF}) is compact, we obtain that $\mathscr
U_{\mathfrak a,0}>0$ if and only if $\ker\mathscr U_{\mathfrak
a,0}=\{0\}$. Thus it is enough to prove the equivalence
\begin{equation}\label{A3aux}
(A_3)\Longleftrightarrow \ker\mathscr U_{\mathfrak a,0}=\{0\}.
\end{equation}

Set $\mathcal X:=\{ e^{i\lambda_jt}d\ | \ j\in\mathbb{Z},\
d\in\mathrm{Ran}\ \alpha_j \}\subset L_2((-1,1),\mathbb C^r)$ and
note that the condition $(A_3)$ is equivalent to the equality
$\mathcal X^\perp=\{0\}$. Consider the unitary transformation $ U:
L_2((-1,1),\mathbb C^r) \to \mathbb H$ given by
$$
(Uf)(x):= (f(-x),f(x))\in\mathbb C^{2r}, \qquad x\in(0,1).
$$

It follows from the definitions of $\mathscr U_{\mathfrak a,0} $
and $\Phi_0(\lambda)$ that
$$
\ker\mathscr U_{\mathfrak a,0}=\bigcap\limits_{j\in\mathbb
Z}\ker\alpha_j\Phi_0^*(\lambda_j) =(U\mathcal X)^\perp=U\mathcal
X^\perp,
$$
and therefore (\ref{A3aux}) is proved.
\end{proof}

\section{Inverse spectral problem}
In this section we solve the inverse spectral problem for the
operator $T_q$. We show that if a collection $\mathfrak a$
satisfies the conditions $(A_1)-(A_3)$, then $\mathfrak
a=\mathfrak a_q$ for some $q\in\mathfrak Q$ and suggest a method
of constructing such $q$.

\subsection{The Krein accelerant: proof of Theorem \ref{Th3}}
Here we prove Theorem \ref{Th3}, i.e. we show that any collection
$\mathfrak a$ satisfying the conditions $(A_1)$ and $(A_3)$
generates the Krein accelerant belonging to $\mathfrak H_2^s$.

Since the convergence of (\ref{HMudef}) was already proved, it is
left to prove only the following lemma.

\begin{lemma}
Let $\mathfrak a$ satisfy the conditions $(A_1)$ and $(A_3)$,
$\mu:=\mu^{\mathfrak a}$, $H:=H_\mu$. Then the function $H$ is an
accelerant and belongs to $\mathfrak H_2^s$.
\end{lemma}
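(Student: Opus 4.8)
The plan is to derive the accelerant property of $H=H_\mu$ from the positivity of the operator $\mathscr U_{\mathfrak a,0}$, which is exactly what condition $(A_3)$ provides. The symmetry requirement for membership in $\mathfrak H_2^s$, namely $H(x)^*=H(-x)$, is already available: it is the relation (\ref{HadjReal}) of Lemma \ref{LemmaHconv}. Hence only $H\in\mathfrak H_2$ remains to be shown. By (\ref{UaqUa0}) we have $\mathscr U_{\mathfrak a,q}=(\mathscr I+\mathscr K_q)\mathscr U_{\mathfrak a,0}(\mathscr I+\mathscr K_q^*)$, and $\mathscr I+\mathscr K_q$ is a homeomorphism of $\mathbb H$; thus Lemma \ref{A3Lemma} with $q=0$ shows that $(A_3)$ is equivalent to $\mathscr U_{\mathfrak a,0}>0$. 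Under our hypotheses $\mathscr U_{\mathfrak a,0}$ is therefore positive definite, and by (\ref{UF}) it equals $\mathscr I+\mathscr F_H$.

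I would then transport this positivity onto the convolution operators that enter the definition of an accelerant. With the unitary $U\colon L_2((-1,1),\mathbb C^r)\to\mathbb H$, $(Uf)(x)=(f(-x),f(x))$, from the proof of Lemma \ref{A3Lemma}, a direct computation starting from (\ref{FHdef}) shows that $U^*\mathscr F_HU$ is the integral operator on $L_2((-1,1),\mathbb C^r)$ with the \emph{convolution} kernel $\tfrac12H\!\left(\tfrac{s-y}{2}\right)$. Consequently $\mathscr I+\mathscr N:=U^*(\mathscr I+\mathscr F_H)U$ is positive definite, where $\mathscr N$ denotes the integral operator with that kernel.

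The decisive structural point is that, for each $c\in(0,1]$, the compression $P_c(\mathscr I+\mathscr N)P_c$ to the subspace $L_2((-c,c),\mathbb C^r)$ (with $P_c$ the corresponding orthogonal projection) is again positive definite, since compressing a positive-definite operator to a subspace preserves positive definiteness. Because $\mathscr N$ is a convolution operator, restricting its kernel to $(-c,c)^2$ and then applying the unitary rescaling $(-c,c)\to(0,c)$ turns this compression into $\mathscr I+\tfrac12\mathscr H_c$ on $L_2((0,c),\mathbb C^r)$, where $\mathscr H_c$ has the kernel $H(x-t)$ that appears in the accelerant equation. Thus the truncated Krein operators are, up to the normalization carried through the change of variables, compressions of the single positive operator $\mathscr I+\mathscr N$, and hence are positive definite for every $c\in(0,1]$. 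Positivity forces trivial kernel, so the homogeneous equation $f(x)+\int_0^aH(x-t)f(t)\,dt=0$ admits only $f=0$ for each $a\in[0,1]$; this is precisely $H\in\mathfrak H_2$, and together with the symmetry already recorded we conclude $H\in\mathfrak H_2^s$.

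The step I expect to be the main obstacle is the identification in the middle paragraphs: executing the change of variables so that the $2r\times2r$ kernel $F_H$ of (\ref{FHdef}) collapses, under $U$, to a genuine $r\times r$ convolution kernel, and then verifying that the family of symmetric compressions, after rescaling, reproduces exactly the operators $\mathscr I+\mathscr H_a$ of the definition with the correct normalization constant. Once that bookkeeping is in place, the remainder is soft: compression preserves positive definiteness, and a positive-definite operator has trivial kernel, which is the accelerant property.
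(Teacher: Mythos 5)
Your argument is essentially the paper's: it too passes from $(A_3)$ to $\mathscr U_{\mathfrak a,0}=\mathscr I+\mathscr F_H>0$ via Lemma \ref{A3Lemma} and \eqref{UF}, conjugates $\mathscr F_H$ by an explicit unitary into a convolution operator with kernel $H(x-t)$ acting on an $r$-vector-valued $L_2$ space (the paper's map $V$ is, up to a component swap, your $U^*$ followed by the affine rescaling of $(-1,1)$ onto $(0,1)$), and then the accelerant property follows because compressions of a positive definite operator are positive definite and hence have trivial kernel. The one point in your flagged bookkeeping that must come out right is the constant: it is $1$, not $\tfrac12$ --- the Jacobian factor $2$ of the length-halving change of variables exactly cancels the $\tfrac12$ in \eqref{FHdef} --- and this is essential, since positivity of $\mathscr I+\tfrac12\mathscr H_a$ would not yield triviality of $\ker(\mathscr I+\mathscr H_a)$; also the kernel you actually obtain is $H(t-x)=H^{\sharp}(x-t)$, which transfers to $H(x-t)$ using \eqref{HadjReal} and the reflection $t\mapsto a-t$, exactly as in the proof of Lemma \ref{Lemma2}$(ii)$.
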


\begin{proof}
Let us prove that $H$ belongs to $\mathfrak H_2$. It is enough to
prove that the operator $\mathscr I+\mathscr H$ is positive in
$L_2((0,1),\mathbb C^r)$, where $\mathscr H$ is given by
\begin{equation}\label{WienerHopfDef}
(\mathscr Hf)(x)=\int\limits_0^1H(x-t)f(t)dt.
\end{equation}

By virtue of (\ref{UF}) and Lemma \ref{A3Lemma}, the condition
$(A_3)$ implies the \linebreak positivity of the operator
$\mathscr I+\mathscr F_H$ in the space $L_2((0,1),\mathbb
C^{2r})$. Consider the unitary transformation $V:L_2((0,1),\mathbb
C^{2r})\to L_2((0,1),\mathbb C^r)$,
$$
(Vf)(t)=\begin{cases}\sqrt{2}f_2(1-2t),&t\in(0,1/2],\\\sqrt{2}f_1(2t-1),&t\in(1/2,1).\end{cases}
$$
A direct verification shows that $\mathscr I+\mathscr H=V(\mathscr
I+\mathscr F_H)V^{-1}$, and thus the operators $\mathscr
I+\mathscr H$ and $\mathscr I+\mathscr F_H$ are unitary
equivalent. Therefore $I+\mathscr H>0$ in $L_2((0,1),\mathbb
C^r)$. It is left to notice that by virtue of the relation
(\ref{HadjReal}) the function $H$ belongs to $\mathfrak H_2^s$.
\end{proof}

\subsection{Factorization of $\mathscr U_{\mathfrak a,0}$}
Given a collection $\mathfrak a$ satisfying the asymptotics
$(A_1)$, put $\mu:=\mu^{\mathfrak a}$, $H:=H_\mu$, and construct
the operator $\mathscr U_{\mathfrak a,0}$ via (\ref{UaqDef}).  In
this subsection we show that $\mathscr U_{\mathfrak a,0}$ admits a
factorization in $\mathscr G_2(M_r)$. Some statements concerning
the theory of factorization can be found in \ref{add.2}.

\subsubsection{Basic properties of $R_H$}
Recall that for $H\in\mathfrak H_2$ we denote by $R_H$ the
solution of the Krein equation (\ref{KreinEq}). Here we prove some
basic properties of $R_H$.

\begin{lemma}\label{Lemma2}
\begin{itemize}
\item[$(i)$]If $H\in\mathfrak H_2$, then $R_H\in G_2^+(M_r)$
and the mapping
$$
\mathfrak H_2\owns H\mapsto R_H\in G_2^+(M_r)
$$
is continuous;
\item[$(ii)$]if $H\in\mathfrak H_2^s$, then $H^\sharp\in\mathfrak H_2^s$ and
\begin{equation}\label{LemmaRel}
R_{H^\sharp}(\cdot,0)=[R_H(\cdot,0)]^*;
\end{equation}
\item[$(iii)$]the mapping $\Theta:\mathfrak H_2^s\to
\mathfrak Q$ given by $\Theta(H):=iR_H(\cdot,0)$ is continuous;
\item[$(iv)$]if $H\in\mathfrak H_2\cap C^1([-1,1],M_r)$, then $R_H\in
C^1(\Omega^+,M_r)$.
\end{itemize}
\end{lemma}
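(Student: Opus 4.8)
The plan is to treat the four assertions in turn, taking $(i)$ as the analytic core, deducing $(iii)$ from it, and handling $(ii)$ and $(iv)$ as essentially self-contained computations. For $(i)$ I would first record unique solvability of the Krein equation fibrewise in $x$. Fix $H\in\mathfrak H_2$ and, for $a\in[0,1]$, let $\mathscr H_a$ be the integral operator on $L_2((0,a),\mathbb C^r)$ with kernel $H(t-s)$. The accelerant condition says $\ker(\mathscr I+\mathscr H_a)=\{0\}$; since $\mathscr H_a$ is Hilbert--Schmidt, $\mathscr I+\mathscr H_a$ is Fredholm of index zero, hence boundedly invertible, and so is its adjoint. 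Taking the adjoint of \eqref{KreinEq} for fixed $x$ turns it into $(\mathscr I+\mathscr H_x^*)$ applied column-wise to $R_H(x,\cdot)^*$, giving unique solvability and the representation $R_H(x,\cdot)^*=-(\mathscr I+\mathscr H_x^*)^{-1}[H(x-\cdot)^*]$. To upgrade this to membership in $G_2^+(M_r)$ and to continuity of $H\mapsto R_H$, I would embed the truncations in the fixed space $L_2((0,1),\mathbb C^r)$ through the projectors $P_x$ of multiplication by the indicator of $(0,x)$ and work with $\mathscr I+P_x\mathscr H P_x$. Because $x\mapsto P_x$ is strongly continuous and $\mathscr H$ is compact, $x\mapsto P_x\mathscr H P_x$ is norm-continuous on the compact interval $[0,1]$, and a norm-continuous family of invertible operators over a compact parameter set has uniformly bounded, norm-continuous inverses. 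This simultaneously yields the uniform $L_2$-bound and the continuity of the sections $x\mapsto R_H(x,\cdot)$, i.e. $R_H\in G_2^+(M_r)$, while continuity of $H\mapsto R_H$ follows since kernel and forcing depend continuously on $H\in L_2((-1,1),M_r)$ and uniform invertibility persists under small perturbations of $H$. I expect this uniform-in-$x$ invertibility, and the passage into the structured space $G_2^+$, to be the main obstacle; the rest rests on it.

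For $(ii)$, that $H^\sharp\in\mathfrak H_2^s$ follows from the unitary local reflection $J_xf(t):=f(x-t)$, which gives $\mathscr H_x^\sharp=J_x\mathscr H_xJ_x$ (so the accelerant property transfers from $H$ to $H^\sharp$), together with the pointwise identity $(H^\sharp)^*=(H^\sharp)^\sharp$; here $\mathscr H_x^\sharp$ is built from $H^\sharp$ as $\mathscr H_x$ is from $H$. For the boundary relation, note that for $H\in\mathfrak H_2^s$ the operator $\mathscr H_x$ is self-adjoint; writing $\mathscr I+\mathscr N_x:=(\mathscr I+\mathscr H_x)^{-1}$, its kernel satisfies $N_x(t,s)=N_x(s,t)^*$. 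Taking the adjoint of \eqref{KreinEq} for $H$ (using $H^*=H^\sharp$) and evaluating at $t=0$ gives $R_H(x,0)=-H(x)-\int_0^xH(x-s)N_x(s,0)\,ds$, while the same computation for $H^\sharp$, combined with $(\mathscr I+\mathscr H_x^\sharp)^{-1}=J_x(\mathscr I+\mathscr H_x)^{-1}J_x$ and $J_x[H(x-\cdot)]=H(\cdot)$, gives $R_{H^\sharp}(x,0)^*=-H(x)-\int_0^xN_x(x,s)H(s)\,ds$. It then remains to see that the two integrals agree. Substituting the two-sided resolvent identities $N_x(x,s)=-H(x-s)-\int_0^xH(x-u)N_x(u,s)\,du$ and $N_x(s,0)=-H(s)-\int_0^xN_x(s,u)H(u)\,du$ and relabelling, both integrals reduce to
$$
-\int_0^xH(x-s)H(s)\,ds-\int_0^x\!\int_0^xH(x-u)N_x(u,s)H(s)\,du\,ds,
$$
whence $R_{H^\sharp}(\cdot,0)=[R_H(\cdot,0)]^*$.

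Finally, $(iii)$ is immediate from $(i)$: by the definition of $G_2$ the evaluation of the continuous section at the fixed endpoint $t=0$ is a bounded map $G_2^+(M_r)\to L_2((0,1),M_r)=\mathfrak Q$, so $\Theta$, being $H\mapsto iR_H(\cdot,0)$, is a composition of continuous maps. For $(iv)$ I would bootstrap: with $H\in C^1$ the resolvent kernel is continuous, so $R_H$ is continuous on $\Omega^+$. Differentiating \eqref{KreinEq} in $t$ expresses $\partial_tR_H(x,t)=H'(x-t)+\int_0^xR_H(x,s)H'(s-t)\,ds$ directly through already-continuous data, while differentiating in $x$ produces the Krein-type equation $\partial_xR_H(x,t)+H'(x-t)+R_H(x,x)H(x-t)+\int_0^x\partial_xR_H(x,s)H(s-t)\,ds=0$, whose forcing is continuous and whose solution is therefore continuous by the unique-solvability argument of $(i)$. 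Hence $R_H\in C^1(\Omega^+,M_r)$.
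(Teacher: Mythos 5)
Your core mechanism for $(i)$ --- uniform invertibility and norm-continuity of the truncated family $\mathscr I+\chi_x\mathscr H\chi_x$ over the compact interval $[0,1]$ --- is exactly the engine of the paper's proof (there the truncations are written $\mathscr H^a=\chi_a\mathscr H\chi_a$, and the kernel $\Gamma_{a,H}$ of $-\mathscr H^a(\mathscr I+\mathscr H^a)^{-1}$ carries all the continuity). Parts $(ii)$ and $(iii)$ are essentially the paper's argument written out in more detail: your $N_x$ is the paper's $\Gamma_{x,H}$, and the two symmetries you exploit (reflection $N_x(x-u,x-v)$ for $H^\sharp$ and Hermitian symmetry $N_x(u,v)=N_x(v,u)^*$) are precisely the two identities the paper invokes; your relabelling computation that identifies the two boundary integrals is correct. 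For $(iv)$ you genuinely diverge: the paper cites Gokhberg--Krein for $C^1$-dependence of $\Gamma_{a,H}$ on the parameter $a$ and reads off smoothness from the explicit representation of $R_H$, whereas you bootstrap by differentiating the Krein equation. Your route is more self-contained, but differentiating in $x$ under the integral presupposes that $\partial_xR_H$ exists; the clean version first solves the differentiated equation for a candidate $S(x,\cdot)$ (possible by the invertibility established in $(i)$) and then identifies $S$ with the $x$-derivative via difference quotients.

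The one genuine gap is in $(i)$, at the step ``continuity of the sections $x\mapsto R_H(x,\cdot)$, i.e.\ $R_H\in G_2^+(M_r)$''. Membership in $G_2(M_r)$ requires continuity into $L_2$ of \emph{both} families of sections, $x\mapsto K(x,\cdot)$ and $t\mapsto K(\cdot,t)$, and your fibrewise argument (solving in the $t$-variable for each fixed $x$) delivers only the first. The paper avoids the issue by exhibiting the explicit representation $R_H(x,t)=\hat R_H(x,t)-H(x-t)$ with $\hat R_H$ \emph{jointly} continuous on $\Omega^+$, which yields both section continuities at once (the convolution kernel $H(x-t)$ lies in $G_2$ by continuity of translation in $L_2$). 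Your argument can be repaired without changing its structure: the uniform bound $\sup_x\|R_H(x,\cdot)\|_{L_2}<\infty$ that you do obtain, fed back into
$$
R_H(x,t)=-H(x-t)-\int\limits_0^xR_H(x,s)H(s-t)\,ds,
$$
gives, by Cauchy--Schwarz in $s$ and continuity of translation $t\mapsto H(\cdot-t)$ in $L_2$, the estimate $\|R_H(\cdot,t)-R_H(\cdot,t')\|_{L_2}\le C\,\|H(\cdot-t)-H(\cdot-t')\|_{L_2}$ plus a boundary term that also tends to zero, hence continuity of $t\mapsto R_H(\cdot,t)$. With that supplement, and the difference-quotient caveat in $(iv)$, the proposal is sound.
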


\begin{proof}
We start from proving $(i)$. Suppose that $H\in\mathfrak H_2$.
Denote by $\mathscr H$ the operator given by
(\ref{WienerHopfDef}), and set $\mathscr H^a:=\chi_a\mathscr
H\chi_a$ (see \ref{add.2}). Since $H\in\mathfrak H_2$,
$\ker(\mathscr I+\mathscr H^a)=\{0\}$ for all $a\in[0,1]$, and the
operator $\mathscr I+\mathscr H^a$ is invertible in the algebra
$\mathscr B(L_2)$ of bounded linear operators acting in
$L_2((0,1),\mathbb C^r)$. Since $\mathscr H^a$ depends continuously
on $a\in[0,1]$, the mapping $[0,1]\owns a\mapsto(\mathscr
I+\mathscr H^a)^{-1}\in\mathscr B(L_2)$ is continuous. Denote by
$\Gamma_{a,H}$ the kernel of the integral operator $-\mathscr
H^a(\mathscr I+\mathscr H^a)^{-1}$. Since $\mathscr H$ is a
Hilbert--Schmidt operator, the mapping
$$
[0,1]\times\mathfrak H_2\owns(a,H)\mapsto\Gamma_{a,H}\in
L_2((0,1)^2,M_r)
$$
is also continuous.

For $(x,t)\in\Omega^+$ put
\begin{equation}\label{bRHrepr}
\hat R_H(x,t):=\int\limits_0^xH(x-y)H(y-t)dy+
\int\limits_0^x\int\limits_0^xH(x-u)\Gamma_{x,H}(u,v)H(v-t)dvdu.
\end{equation}
It is easily seen that the mapping $\mathfrak H_2\owns H\mapsto
\hat R_H\in C(\Omega^+,M_r)$ is continuous. A direct verification
shows that the function
\begin{equation}\label{RHrepr}
R_H(x,t):=\begin{cases}\hat
R_H(x,t)-H(x-t),&(x,t)\in\Omega^+,\\0,&(x,t)\in\Omega^-\end{cases}
\end{equation}
solves the Krein equation (\ref{KreinEq}). Therefore $R_H$ belongs
to $G_2^+(M_r)$, and the mapping $\mathfrak H_2\owns H\mapsto
R_H\in G_2^+(M_r)$ is continuous.

Let us prove $(ii)$. Assume that $H\in\mathfrak H_2^s$. First let
us show that $H^\sharp\in\mathfrak H_2^s$. Construct the integral
operator $\mathscr H^\sharp$ via the formula (\ref{WienerHopfDef})
with $H^\sharp$ instead of $H$. The operators $\mathscr I+\mathscr
H^\sharp$ and $\mathscr I+\mathscr H$ are unitary equivalent under
the unitary transformation $f(t)\mapsto f(1-t)$. Therefore
$I+\mathscr H>0$ if and only if $I+\mathscr H^\sharp>0$, and thus
$H^\sharp\in\mathfrak H_2^s$. The equality (\ref{LemmaRel}) can be
easily verified having noted that
$\Gamma_{a,H}(a-x,a-t)=\Gamma_{a,H^\sharp}(x,t)$ and
$\Gamma_{a,H}(x,t)=[\Gamma_{a,H}(t,x)]^*$ for all $x,t\in[0,a]$
and for all $H\in\mathfrak H_2^s$.

The continuity of $\Theta$ easily follows from its definition and
continuity of the mapping $H\mapsto R_H$, and thus the statement
$(iii)$ is proved.

It is left to prove $(iv)$. It follows from \cite[Chapter
IV]{kreinvolterra} that if $H\in\mathfrak H_2\cap C^1([-1,1],M_r)$,
then the function $a\mapsto\Gamma_{a,H}(u,v)$ is continuously
differentiable for $a\ge\max\{u,v\}$. Therefore taking into
consideration (\ref{RHrepr}) and (\ref{bRHrepr}) we conclude that
$R_H\in C^1(\Omega^+,M_r)$.
\end{proof}

\subsubsection{The GLM equation}
Here we establish structure of the solution of
Gelfand--Levitan--Marchenko (GLM) equation.

\begin{lemma}\label{Th7}
Let $H\in L_2((-1,1),M_r)$. If $H\in\mathfrak H_2^s$, then the GLM
equation
\begin{equation}\label{GLM}
L(x,t)+F_H(x,t)+\int\limits_0^xL(x,s)F_H(s,t)ds=0,\quad
(x,t)\in\Omega^+
\end{equation}
has a unique solution in the class $L_2(\Omega^+,M_{2r})$;
moreover, this solution \linebreak belongs to $G_2^+(M_{2r})$ and
takes the form
\begin{equation}\label{KHform}
L_H(x,t)=\frac{1}{2}\begin{pmatrix}R_H\left(x,\frac{x+t}{2}\right)&R_H\left(x,\frac{x-t}{2}\right)\\
R_{H^\sharp}\left(x,\frac{x-t}{2}\right)&R_{H^\sharp}\left(x,\frac{x+t}{2}\right)\end{pmatrix}.
\end{equation}
\end{lemma}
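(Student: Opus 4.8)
The plan is to verify that the explicit matrix-valued kernel $L_H$ given by \eqref{KHform} actually solves the GLM equation \eqref{GLM}, and then to establish uniqueness separately. For existence, I would substitute \eqref{KHform} and the block form \eqref{FHdef} of $F_H$ directly into the integral equation and reduce the resulting $2\times2$ block identity to scalar (matrix-valued) statements involving $R_H$ and $R_{H^\sharp}$. The key observation is that each of the four blocks of the candidate kernel $L_H$ is built from the solution $R_H$ (respectively $R_{H^\sharp}$) of the Krein equation \eqref{KreinEq}, evaluated at the shifted arguments $(x,\tfrac{x\pm t}{2})$. Since Lemma \ref{Lemma2}(i) guarantees that $R_H\in G_2^+(M_r)$ and part (ii) gives the symmetry relation \eqref{LemmaRel}, I would use the Krein equation that $R_H$ satisfies to rewrite the convolution integrals appearing after substitution. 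The main computational device is a change of variables in the integral $\int_0^x L_H(x,s)F_H(s,t)\,ds$ that splits the $s$-integration to match the half-argument structure of both $L_H$ and $F_H$; after this split, each block reduces to an instance of the Krein equation \eqref{KreinEq} for either $H$ or $H^\sharp$.

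Concretely, I would first record that $H\in\mathfrak H_2^s$ implies $H^\sharp\in\mathfrak H_2^s$ (Lemma \ref{Lemma2}(ii)), so that $R_{H^\sharp}$ is well-defined and lies in $G_2^+(M_r)$; this makes the right-hand side of \eqref{KHform} a legitimate element of $G_2^+(M_{2r})$. Then I would carry out the block multiplication $L_H(x,s)F_H(s,t)$, integrate over $s\in[0,x]$, and, after the substitution $s\mapsto 2s'-x$ (or the symmetric reflection) in each of the four scalar entries, collect the terms. The upper-left block, for instance, should collapse to the Krein equation for $R_H$ evaluated at $(x,\tfrac{x+t}{2})$, while the lower-right block collapses to the Krein equation for $R_{H^\sharp}$; the off-diagonal blocks mix $H$ and $H^\sharp$ and require the relation $H^\sharp(y)=H(-y)$ together with \eqref{LemmaRel} to close. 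Verifying that all four blocks simultaneously satisfy \eqref{GLM} is the heart of the argument.

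For uniqueness, I would exploit the positivity established earlier: by the Proposition in the preceding subsection together with Lemma \ref{A3Lemma}, the operator $\mathscr I+\mathscr F_H$ is positive (hence boundedly invertible) whenever $H\in\mathfrak H_2^s$, because $H\in\mathfrak H_2^s$ means $\mathscr I+\mathscr H>0$, and $\mathscr I+\mathscr H$ is unitarily equivalent to $\mathscr I+\mathscr F_H$. Since the GLM equation \eqref{GLM} is precisely the equation $(\mathscr I+\mathscr F_H^{\,a})$ acting on the rows of $L$ (with the truncation parameter $a=x$), invertibility of $\mathscr I+\mathscr F_H^{\,a}$ for every $a\in[0,1]$ forces the solution in $L_2(\Omega^+,M_{2r})$ to be unique. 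That $L_H$ then lies in $G_2^+(M_{2r})$ follows from the membership $R_H,R_{H^\sharp}\in G_2^+(M_r)$ and the triangular (Volterra) structure of the half-argument substitutions.

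I expect the main obstacle to be the existence computation: the bookkeeping in splitting $\int_0^x L_H(x,s)F_H(s,t)\,ds$ into pieces whose arguments align with the half-sum and half-difference variables, and then recognizing each piece as the Krein-equation residual for $H$ or $H^\sharp$. The symmetry identities $H^\sharp(x)=H(-x)$ and $R_{H^\sharp}(\cdot,0)=[R_H(\cdot,0)]^*$ are what make the off-diagonal blocks consistent, so the delicate point is arranging the change of variables so that these relations can be applied cleanly rather than producing mismatched arguments.
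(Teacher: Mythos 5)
Your proposal is correct and matches the paper's approach, which is simply a direct verification that the explicit kernel \eqref{KHform} solves \eqref{GLM}, with uniqueness and the membership $L_H\in G_2^+(M_{2r})$ obtained from the general factorization facts of Appendix B (in particular Lemma \ref{LemmaIntEq} gives at most one $L_2$ solution for \emph{any} kernel $F$, and solvability in $L_2$ with $F_H\in G_2(M_{2r})$ automatically upgrades the solution to $G_2^+$). The only cosmetic difference is that you derive uniqueness and the $G_2^+$ regularity by hand (via positivity of $\mathscr I+\mathscr F_H$ and the half-argument structure of $R_H$), where the paper just cites the appendix; note that for the regularity step the appendix route is cleaner, since the diagonal slices $t\mapsto R_H(\cdot,\tfrac{\cdot\pm t}{2})$ are not directly controlled by $R_H\in G_2^+(M_r)$ alone.
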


\begin{proof}
A direct verification shows that the function $L_H$ given by
(\ref{KHform}) solves the GLM equation (\ref{GLM}). Since $F_H\in
G_2(M_{2r})$ and $L_H\in L_2(\Omega^+,M_{2r})$, the results of
\ref{add.2} yield that $L_H\in G_2^+(M_{2r})$.
\end{proof}

\begin{remark}\label{Lrem}
Since the mapping $H\mapsto R_H$ is continuous, it is easily seen that
the mapping $H\mapsto L_H$ given by (\ref{KHform}) is also continuous.
\end{remark}

\subsubsection{Theorem on factorization of $\mathscr U_{\mathfrak a,0}$}
Main result of the present subsection is the following theorem.

\begin{theorem}\label{Th8}
Let $\mathfrak a=((\lambda_j,\alpha_j))_{j\in\mathbb Z}$ be a
collection satisfying the conditions $(A_1)$ and $(A_3)$,
$\mu:=\mu^{\mathfrak a}$, $H:=H_\mu$. Set $q:=\Theta(H)$. Then
\begin{equation}\label{Ua0Fact}
\mathscr U_{\mathfrak a,0}=(\mathscr I+\mathscr K_q)^{-1}(\mathscr
I+\mathscr K_q^*)^{-1},
\end{equation}
where $\mathscr K_q$ is an integral operator with kernel $K_q$
(see Lemma \ref{Th5}).
\end{theorem}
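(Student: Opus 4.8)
The plan is to obtain the factorization in two stages: first extract a factorizing Volterra factor from the GLM equation, and then identify that factor with $\mathscr K_q$.

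\textbf{First stage (the factorization).} By the Proposition we have $\mathscr U_{\mathfrak a,0}=\mathscr I+\mathscr F_H$, an operator which is self-adjoint and, by condition $(A_3)$ together with Lemma \ref{A3Lemma}, strictly positive. Let $\mathscr L_H$ denote the integral operator with kernel $L_H$ furnished by Lemma \ref{Th7}; since $L_H\in G_2^+(M_{2r})$ the operator $\mathscr L_H$ is Volterra, so $\mathscr I+\mathscr L_H$ is invertible. Rewriting the GLM equation (\ref{GLM}) in operator form shows that the $\Omega^+$-part (lower-triangular truncation) of $(\mathscr I+\mathscr L_H)(\mathscr I+\mathscr F_H)$ equals $\mathscr I$; hence $(\mathscr I+\mathscr L_H)\mathscr U_{\mathfrak a,0}=\mathscr I+\mathscr W$ for some strictly upper-triangular Volterra operator $\mathscr W$. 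Multiplying on the right by $\mathscr I+\mathscr L_H^*$ and using $\mathscr U_{\mathfrak a,0}^*=\mathscr U_{\mathfrak a,0}$, the operator $(\mathscr I+\mathscr L_H)\mathscr U_{\mathfrak a,0}(\mathscr I+\mathscr L_H^*)$ is simultaneously self-adjoint and of the form $\mathscr I+(\text{strictly upper-triangular Volterra})$, hence equals $\mathscr I$. This yields
$$\mathscr U_{\mathfrak a,0}=(\mathscr I+\mathscr L_H)^{-1}(\mathscr I+\mathscr L_H^*)^{-1}.$$

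\textbf{Second stage (identification $L_H=K_q$).} It then remains to prove that $L_H=K_q$ for $q=\Theta(H)=iR_H(\cdot,0)$, after which the displayed identity becomes exactly (\ref{Ua0Fact}). By the uniqueness in Lemma \ref{Th5}(i), the kernel $K_q$ is the unique element of $G_2^+(M_{2r})$ for which the representation (\ref{varphiRepr}) holds, i.e. for which the function
$$\tilde\varphi(x,\lambda):=\varphi_0(x,\lambda)+\int_0^x L_H(x,s)\varphi_0(s,\lambda)\,ds$$
coincides with the solution $\varphi_q$ of the Cauchy problem (\ref{probl1})--(\ref{PhiPsiDef}) carrying potential $q$. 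I would therefore verify directly that $\tilde\varphi$ solves $\vartheta\tilde\varphi'+\bq\tilde\varphi=\lambda\tilde\varphi$ with the same initial value $\tilde\varphi(0,\lambda)=\vartheta a^*$ as $\varphi_q$, so that $\tilde\varphi=\varphi_q$ by uniqueness of the Cauchy problem and hence $L_H=K_q$. Differentiating $\tilde\varphi$ in $x$ produces the boundary term $L_H(x,x)\varphi_0(x,\lambda)$; by the block form (\ref{KHform}) and the symmetry $R_{H^\sharp}(\cdot,0)=[R_H(\cdot,0)]^*$ from Lemma \ref{Lemma2}(ii), the off-diagonal blocks of $L_H(x,x)$ are $\tfrac12 R_H(x,0)$ and $\tfrac12[R_H(x,0)]^*$, which reproduce $\bq$ precisely when $q=iR_H(\cdot,0)$. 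The remaining terms are controlled by inserting the Krein equation (\ref{KreinEq}) satisfied by $R_H$ and $R_{H^\sharp}$, which converts the $x$-derivative of the integral kernel into the form demanded by the differential expression $\mathfrak t_q$.

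\textbf{Expected main obstacle.} The nontrivial part is this second stage: translating the Krein equation for $R_H$, through the block representation (\ref{KHform}), into the statement that $\mathscr I+\mathscr L_H$ is the transformation operator of $T_q$, i.e. that $L_H=K_q$; the factorization itself then follows at once from the first stage. A convenient way to organize the computation is to treat smooth $H$ first, so that $R_H\in C^1(\Omega^+,M_r)$ by Lemma \ref{Lemma2}(iv) and every differentiation above is classical, establish $L_H=K_q$ there by the Goursat/uniqueness argument, and then pass to general $H\in\mathfrak H_2^s$ using continuity of the maps $H\mapsto L_H$ (Remark \ref{Lrem}), $H\mapsto R_H$ and $q\mapsto K_q$ (Lemmas \ref{Lemma2} and \ref{Th5}).
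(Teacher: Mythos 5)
Your proposal is correct and follows essentially the same route as the paper: first obtain $\mathscr U_{\mathfrak a,0}=(\mathscr I+\mathscr L_H)^{-1}(\mathscr I+\mathscr L_H^*)^{-1}$ from the GLM equation (the paper simply cites this from its Appendix B rather than re-deriving it as you do), then reduce everything to the identity $L_H=K_q$, proved by checking that $\varphi_0(x,\lambda)+\int_0^xL_H(x,s)\varphi_0(s,\lambda)\,ds$ solves the Cauchy problem with potential $q=iR_H(\cdot,0)$, first for $H\in\mathfrak H_2^s\cap C^1([-1,1],M_r)$ and then for general $H$ by density and the continuity of $H\mapsto L_H$, $\Theta$ and $q\mapsto K_q$. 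The only organizational difference is that the paper performs the verification after rewriting the integral as $\int_0^x\tR_H(x,x-s)\varphi_0(x-2s,\lambda)\,ds$ with the block-diagonal $\tR_H=\operatorname{diag}(R_H,R_{H^\sharp})$, which commutes with $\vartheta$, so that the single boundary term yields $\bq(x)=-\vartheta\tR_H(x,0)J$ exactly and the residual integrand is killed by a homogeneous Volterra equation (Lemma \ref{LemmaIntEq}); in your form with $L_H(x,x)$ the potential emerges only after an additional integration by parts, which you would need to carry out explicitly.
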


\begin{proof}
By virtue of Lemma \ref{Th7}, the function $L_H$ given by
(\ref{KHform}) solves the GLM equation (\ref{GLM}). Thus, as
follows from \ref{add.2}, the equality
$$
\mathscr U_{\mathfrak a,0}=(\mathscr I+\mathscr L_H)^{-1}(\mathscr
I+\mathscr L_H^*)^{-1}
$$
takes place, where $\mathscr L_H$ is an integral operator with
kernel $L_H$. Therefore it is left to show that $\mathscr
L_H=\mathscr K_q$, i.e. it suffices to show that
\begin{equation}\label{LK}
L_H=K_q,\quad q=\Theta(H).
\end{equation}

Notice that it is enough to prove (\ref{LK}) only for the case
$H\in\mathfrak H_2^s\cap C^1([-1,1],M_r)$. Indeed, the set
$\mathfrak H_2^s\cap C^1([-1,1],M_r)$ is dense everywhere in
$\mathfrak H_2^s$, and the mappings $q\mapsto K_q$, $\Theta$ and
$H\mapsto L_H$ are continuous (see Lemma \ref{Th5}, Lemma
\ref{Lemma2} and Remark \ref{Lrem} respectively).

Let $H\in\mathfrak H_2^s\cap C^1([-1,1],M_r)$. Taking into
consideration Lemma \ref{Th5}, it is easily seen that the equality
(\ref{LK}) is equivalent to the fact that the function
\begin{equation}\label{PhiDef}
\varphi(x,\lambda):=\varphi_0(x,\lambda)+\int\limits_0^x
L_H(x,s)\varphi_0(s,\lambda)ds
\end{equation}
solves the Cauchy problem
\begin{equation}\label{PhiEq}
\vartheta\tfrac{d}{dx}\varphi+\bq\varphi=\lambda\varphi,\quad
\varphi(0,\lambda)=\vartheta a^*.
\end{equation}

Thus it is left to prove (\ref{PhiEq}). Let us introduce the
auxiliary functions
$$
\tH:=\begin{pmatrix}H&0\\0&H^\sharp\end{pmatrix},\quad
\tR_H:=\begin{pmatrix}R_H&0\\0&R_{H^\sharp}\end{pmatrix}.
$$
The definitions of $R_H$ and $\tR_H$ yield that the following
relation holds:
\begin{equation}\label{KreinEq1}
\tR_H(x,t)+\tH(x-t)+\int\limits_0^x\tR_H(x,s)\tH(s-t)ds=0,\quad
(x,t)\in\Omega^+.
\end{equation}
Moreover, by virtue of Lemma \ref{Lemma2} we obtain that $\tR_H\in
C^1(\Omega^+,M_{2r})$.

Noting that
$$
L_H(x,t)=\tfrac{1}{2}\left\{\tR_H\left(x,\tfrac{x+t}{2}\right)+\tR_H\left(x,\tfrac{x-t}{2}\right)J\right\}
$$
and
$$
J\varphi_0(x,\lambda)=\varphi_0(-x,\lambda),\quad
J=\begin{pmatrix}0&I\\I&0\end{pmatrix},
$$
we can rewrite (\ref{PhiDef}) as
$$
\varphi(x,\lambda)=\varphi_0(x,\lambda)+\int\limits_0^x
\tR_H(x,x-s)\varphi_0(x-2s,\lambda)ds.
$$
From this equality, taking into consideration that
$\vartheta\tfrac{d}{dx}\varphi_0(x,\lambda)-\lambda\varphi_0(x,\lambda)=0$,
we easily obtain that
$$
\vartheta\tfrac{d}{dx}\varphi(x,\lambda)+\bq(x)\varphi(x,\lambda)-\lambda\varphi(x,\lambda)
=\{\vartheta
\tR_H(x,0)J\varphi_0(x,\lambda)+\bq(x)\varphi_0(x,\lambda)\}
$$
\begin{equation}\label{SecondInt}
+\int\limits_0^x\left\{\vartheta \tfrac{\partial}{\partial x}[
\tR_H(x,x-s)]+\bq(x)\tR_H(x,x-s)\right\}\varphi_0(x-2s,\lambda)ds.
\end{equation}
Taking into consideration (\ref{LemmaRel}) we conclude that
$\bq(x)=-\vartheta\tR_H(x,0)J$ and thus the relation (\ref{SecondInt}) can
be rewritten as
\begin{eqnarray*}
&\vartheta\frac{d}{dx}\varphi(x,\lambda)+\bq(x)\varphi(x,\lambda)-\lambda\varphi(x,\lambda)
\\
&=\vartheta\int\limits_0^x\left\{\frac{\partial}{\partial x}[
\tR_H(x,x-s)]-\tR_H(x,0)J
\tR_H(x,s)J\right\}\varphi_0(x-2s,\lambda)ds.
\end{eqnarray*}
If we show that
\begin{equation}\label{ZeroRel}
\tfrac{\partial}{\partial x}[ \tR_H(x,x-s)]-\tR_H(x,0)J
\tR_H(x,s)J=0
\end{equation}
for $(x,t)\in\Omega^+$, then (\ref{PhiEq}) will be proved.

Let us show (\ref{ZeroRel}). From (\ref{KreinEq1}) we obtain that
$$
\tR_H(x,x-t)+\tH(t)+\int\limits_0^x\tR_H(x,x-s)\tH(t-s)ds=0,\quad
(x,t)\in\Omega^+,
$$
and differentiating this expression with respect to $x$ we can
write
\begin{equation}\label{DifEq}
\tfrac{\partial}{\partial x}\tR_H(x,x-t)+
\tR_H(x,0)\tH(t-x)+\int\limits_0^x\tfrac{\partial}{\partial x}[
\tR_H(x,x-s)]\tH(t-s)ds=0.
\end{equation}
Now we multiply the relation (\ref{KreinEq1}) by $\tR_H(x,0)J$
from the left and by $J$ from the right, and write
\begin{eqnarray}\nonumber
\tR_H(x,0)J\tR_H(x,t)J+\tR_H(x,0)J\tH(x-t)J
\\\label{MultEq}
+\int\limits_0^x\tR_H(x,0)J\tR_H(x,s)\tH(s-t)Jds=0
\end{eqnarray}
for $(x,t)\in\Omega^+$. Subtracting now (\ref{DifEq}) from
(\ref{MultEq}) and taking into consideration that
$\tH(x)J=J\tH(-x)$, we obtain that the function
$$
X(x,t)=\tfrac{\partial}{\partial x}[ \tR_H(x,x-s)]-\tR_H(x,0)J
\tR_H(x,s)J
$$
solves the equation
$$
X(x,t)+\int\limits_0^xX(x,s)\tH(s-t)ds=0,\quad (x,t)\in\Omega^+.
$$
Since $\tR_H\in C^1(\Omega^+,M_{2r})$, $X\in C(\Omega^+,M_{2r})$
and thus by virtue of Lemma \ref{LemmaIntEq} and the relation
(\ref{KreinEq1}) we conclude that $X(x,t)\equiv0$. Therefore the
relation (\ref{ZeroRel}) follows, and the proof is complete.
\end{proof}

\begin{remark}
Let $\mathfrak a=((\lambda_j,\alpha_j))_{j\in\mathbb Z}$ be a
collection satisfying the conditions $(A_1)$ and $(A_3)$,
$\mu:=\mu^{\mathfrak a}$, $H:=H_\mu$, $q:=\Theta(H)$. Then from
the equalities (\ref{Ua0Fact}) and (\ref{UaqUa0}) we obtain that
\begin{equation}\label{eq1}
\mathscr U_{\mathfrak a,q}=\mathscr I.
\end{equation}
\end{remark}

\subsection{Description of the spectral data: the sufficiency part}
In this subsection we show that if a collection $\mathfrak a$
satisfies the conditions $(A_1)-(A_3)$, then it belongs to
$\mathfrak A$, i.e. that $\mathfrak a=\mathfrak a_q$ for some
$q\in\mathfrak Q$. This is the sufficiency part of Theorem
\ref{Th1}.

We start from proving the following lemma.

\begin{lemma}\label{Lemma5}
Let $\mathfrak a=((\lambda_j,\alpha_j))_{j\in\mathbb Z}$ be a
collection satisfying the conditions $(A_1)-(A_3)$,
$\mu:=\mu^{\mathfrak a}$, $H:=H_\mu$, $q:=\Theta(H)$. Set
$$
\hat P_j:=\Phi_q(\lambda_j)\alpha_j\Phi_q^*(\lambda_j).
$$
Then the collection $\{\hat P_j\}_{j\in\mathbb Z}$ forms a
complete system of pairwise orthogonal projectors.
\end{lemma}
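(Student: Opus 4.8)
The plan is to read the whole statement off a single operator-theoretic fact: relation \eqref{eq1}, $\mathscr U_{\mathfrak a,q}=\mathscr I$, says precisely that a natural analysis map is an isometry, and a complete system of pairwise orthogonal projectors of the stated form is exactly what one gets by pulling the coordinate projectors of a direct sum back through that map, once the map is known to be \emph{onto}.

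First I would dispose of the two easy halves. Each $\hat P_j=\Phi_q(\lambda_j)\alpha_j\Phi_q^*(\lambda_j)$ is self-adjoint, and since $\alpha_j\ge0$ it is nonnegative; and by the definition \eqref{UaqDef} together with \eqref{eq1} the series $\sum_{j}\hat P_j=\mathscr U_{\mathfrak a,q}=\mathscr I$ converges in the strong operator topology, which is exactly the completeness assertion. It therefore remains to prove idempotency and mutual orthogonality of the $\hat P_j$.

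To that end I introduce the Hilbert space $\mathcal K:=\bigoplus_{j\in\mathbb Z}\Ran\alpha_j$, write $E_j$ for the orthogonal projector of $\mathcal K$ onto its $j$th summand, and define $W:\mathbb H\to\mathcal K$ by $Wf:=(\alpha_j^{1/2}\Phi_q^*(\lambda_j)f)_{j}$ (note $\Ran\alpha_j^{1/2}=\Ran\alpha_j$). Then $\langle W^*Wf,f\rangle=\sum_j\|\alpha_j^{1/2}\Phi_q^*(\lambda_j)f\|^2=\langle\mathscr U_{\mathfrak a,q}f,f\rangle=\|f\|^2$, so by \eqref{eq1} the map $W$ is an isometry, $W^*W=\mathscr I$, and a direct computation gives $W^*E_jW=\hat P_j$. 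If in addition $W$ were \emph{onto}, hence unitary, then $WW^*=\mathscr I_{\mathcal K}$ yields at once $\hat P_j^2=W^*E_jWW^*E_jW=W^*E_jW=\hat P_j$ and $\hat P_j\hat P_k=W^*E_jE_kW=0$ for $j\neq k$. Thus the entire lemma reduces to the surjectivity of $W$.

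The surjectivity is the one nontrivial point. Since $\ker W^*=(\Ran W)^\perp$, the map $W$ is onto iff the only $\ell_2$-null combination $\sum_j\Phi_q(\lambda_j)\alpha_j^{1/2}c_j=0$ is the trivial one, i.e. iff the system $\{\Phi_q(\lambda_j)d\mid d\in\Ran\alpha_j\}$ is minimal in $\mathbb H$. By \eqref{PhiKrel} and the fact that $\mathscr I+\mathscr K_q$ is a homeomorphism, this minimality is equivalent to minimality of the free system $\{\Phi_0(\lambda_j)d\mid d\in\Ran\alpha_j\}$, and the unitary $U$ of Lemma \ref{A3Lemma} turns the latter into minimality of the vector exponentials $\{e^{i\lambda_jt}d\mid d\in\Ran\alpha_j\}$ in $L_2((-1,1),\mathbb C^r)$. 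Here $(A_3)$ already gives \emph{completeness} of this system, so the missing property is minimality, and this is where the remaining hypotheses enter: the asymptotics $(A_1)$ (placing the $\lambda_j$ at the points $\pi n$ up to an $\ell_2$-error and forcing $\sum_n\|\beta_n\|^2<\infty$) together with the exact count $(A_2)$ exhibit the system as a quadratically small perturbation of the orthonormal basis $\{e^{i\pi nt}d\mid d\in\mathbb C^r\}$ carrying the correct number of vectors in each block $\Delta_n$. I expect this to be the main obstacle: showing that such a perturbation is a Riesz basis, hence minimal, which is the matrix analogue of the Kadec/Paley--Wiener stability argument and is carried out as in \cite{sturm}. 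Granting minimality, $W$ is a surjective isometry and the conclusion follows as above; one may also note the spectral reading of the result, namely that minimality forces $\Ran\alpha_j\subseteq\ker s(\lambda_j,q)$, so that $\Ran\hat P_j\subseteq\ker(T_q-\lambda_j\mathscr I)$ by \eqref{KerPhiRel} and the $\hat P_j$ are in fact the spectral projectors of $T_q$.
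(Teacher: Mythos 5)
Your reduction is attractive and the easy parts are right: completeness of the family is exactly $\sum_j\hat P_j=\mathscr U_{\mathfrak a,q}=\mathscr I$ via \eqref{UaqDef} and \eqref{eq1}, the identity $W^*E_jW=\hat P_j$ is correct, and $W^*W=\mathscr U_{\mathfrak a,q}=\mathscr I$ does make $W$ an isometry. You have also correctly located where the real content lies: an isometry $W$ with $W^*W=\mathscr I$ only says that $\{\Phi_q(\lambda_j)\alpha_j^{1/2}c\}$ is a Parseval frame, and a Parseval frame need not be an orthogonal decomposition (think of three equiangular vectors in $\mathbb C^2$), so idempotency and orthogonality of the $\hat P_j$ really do hinge on surjectivity of $W$, i.e.\ on $\ker W^*=\{0\}$.

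The problem is that this surjectivity --- the entire nontrivial content of the lemma --- is not proved; it is deferred to ``the matrix analogue of the Kadec/Paley--Wiener stability argument \ldots carried out as in \cite{sturm}.'' As stated, the proposed route does not work: $(A_1)$ only gives $\sum_n\|P_{n,0}-A_n\|^2<\infty$, not $<1$, so the perturbation of the orthonormal system $\{e^{i\pi nt}d\}$ is merely Hilbert--Schmidt; such a perturbation is Fredholm of index zero but can perfectly well have a nontrivial kernel, so ``quadratically small perturbation of an orthonormal basis $\Rightarrow$ Riesz basis'' is false without an additional global input. Moreover the blocks $\Delta_n$ with $|n|\le N_0$ may carry the wrong number of $\lambda_j$'s individually (only the aggregate count in $(A_2)$ is controlled), and the weights $\alpha_j^{1/2}$ need not be bounded below on $\Ran\alpha_j$, so even setting up a bijection with the reference basis requires the bookkeeping you have skipped. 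This bookkeeping is precisely what the paper's proof supplies: it combines the completeness $\sum_j\hat P_j=\mathscr I$ (which uses $(A_3)$ through Theorem \ref{Th8}) with the estimate of Lemma \ref{Lemma3} and the exact rank counts \eqref{eq3}--\eqref{eq4} derived from $(A_1)$--$(A_2)$, and then invokes the codimension and rank-comparison Lemmas \ref{LemmaPr1} and \ref{LemmaPr2} to force orthogonality block by block. Until you either reproduce that counting argument or give a genuine proof that $\ker W^*=\{0\}$, the proof is incomplete at its central step.
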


\noindent In order to prove this lemma we need the following
additional statements, that are proved in \cite{sturm} (Lemmas B.2
and B.3 respectively).

\begin{lemma}\label{LemmaPr1}
Let $H$ be a Hilbert space. Assume that $(A_j)_{j=1}^\infty$ is a
sequence in $\mathscr B(H)$ and that $(G_j)_{j=1}^\infty$ is a
sequence of pairwise orthogonal projectors such that the following
statements hold:
\begin{itemize}
\item[(i)] the series $\sum_{j=1}^\infty A_j$ converges in the
strong operator topology to an operator $A$;
\item[(ii)] the orthogonal projector $G:=\mathscr I_H-\sum_{j=1}^\infty G_j$ is of finite rank;
\item[(iii)] $\sum_{j=1}^\infty\|A_j-G_j\|^2<1$ and $\rank A_j\le\rank G_j<\infty$
for every $j\in\mathbb N$.
\end{itemize}
Then $\mathrm{codim}\ \mathrm{Ran}\ A\ge\rank G$.
\end{lemma}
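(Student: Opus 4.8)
The plan is to show directly that $\Ran A$ is contained in a closed subspace of deficiency $m:=\rank G$, whence $\operatorname{codim}\Ran A\ge m$. Set $P:=\mathscr I_H-G=\sum_{j}G_j$ (the series converging strongly and $P$ being an orthogonal projector by (ii)), let $\mathcal M:=\Ran P$ and $\mathcal N:=\Ran G$, so that $\dim\mathcal N=m$ and $\mathcal M=\mathcal N^\perp$. Since the $G_j$ are pairwise orthogonal projectors, choosing an orthonormal basis $\{g_{j,1},\dots,g_{j,r_j}\}$ of $\Ran G_j$ (with $r_j:=\rank G_j$) yields an orthonormal basis $\{g_{j,i}\}_{j,i}$ of $\mathcal M$ whose closed span has deficiency exactly $m$. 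The guiding picture is the finite-dimensional case, where the result is immediate from $\rank A\le\sum_j\rank A_j\le\sum_j\rank G_j=\dim H-m$; the whole point is to run this counting in infinite dimensions, where both sums diverge, by localising it in the orthogonal blocks $\Ran G_j$ and controlling the interaction between blocks by means of the $\ell_2$-bound in (iii).

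First I would record the elementary but crucial consequence of (iii) that $\|A_j-G_j\|<1$ for every $j$. Writing $E_j:=A_j-G_j$ and $x_{j,i}:=A_jg_{j,i}=g_{j,i}+E_jg_{j,i}$, the bound $\|E_j\|<1$ forces the vectors $\{x_{j,i}\}_{i=1}^{r_j}$ to be linearly independent; since they lie in $\Ran A_j$ and $\rank A_j\le r_j$, they must form a basis of $\Ran A_j$ (in particular $\rank A_j=r_j$). Consequently $\Ran A_j=\operatorname{span}\{x_{j,i}:i\}$ for every $j$, and therefore, using $Ah=\sum_jA_jh$ with $A_jh\in\Ran A_j$,
\[
\Ran A\subseteq\overline{\operatorname{span}}\{x_{j,i}:j,i\}.
\]
Here the rank hypothesis enters decisively: it guarantees that $A$ produces no output directions beyond the $\{x_{j,i}\}$, i.e. no more than one per basis vector $g_{j,i}$, which is exactly what prevents the deficiency from dropping.

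Next I would realise the right-hand side as the range of a single operator close to the inclusion of $\mathcal M$. Define $T:\mathcal M\to H$ by $Tg_{j,i}:=x_{j,i}$, i.e. $T=\iota+D$, where $\iota:\mathcal M\hookrightarrow H$ is the isometric inclusion and $Dg_{j,i}:=E_jg_{j,i}$. For $u=\sum_j u_j\in\mathcal M$ with $u_j:=P_{\Ran G_j}u$ the (mutually orthogonal) components, one has $Du=\sum_jE_ju_j$, so the triangle and Cauchy--Schwarz inequalities give
\[
\|Du\|\le\sum_j\|E_j\|\,\|u_j\|\le\Big(\sum_j\|E_j\|^2\Big)^{1/2}\Big(\sum_j\|u_j\|^2\Big)^{1/2}=\Big(\sum_j\|E_j\|^2\Big)^{1/2}\|u\|.
\]
Thus $\|T-\iota\|=\|D\|\le(\sum_j\|A_j-G_j\|^2)^{1/2}<1$. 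As $\iota$ is semi-Fredholm with $\ker\iota=\{0\}$ and index $-m$, and its reduced minimum modulus equals $1$, the standard stability of the semi-Fredholm index under perturbations of norm smaller than the minimum modulus shows that $T$ is bounded below, has closed range, and satisfies $\dim(\Ran T)^\perp=m$. Since $\Ran A\subseteq\overline{\operatorname{span}}\{x_{j,i}\}=\Ran T$, this gives $\operatorname{codim}\Ran A\ge\operatorname{codim}\Ran T=m$, as required.

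The step I expect to be the genuine obstacle is the norm estimate $\|T-\iota\|<1$ with the \emph{sharp} constant $(\sum_j\|A_j-G_j\|^2)^{1/2}$. The naive route of estimating the quadratic distance $\sum_{j,i}\|x_{j,i}-g_{j,i}\|^2=\sum_j\|E_j|_{\Ran G_j}\|_{\mathrm{HS}}^2$ loses a factor $r_j=\rank G_j$, which is not assumed bounded and would destroy the threshold $1$. The resolution is to avoid Hilbert--Schmidt norms entirely and bound the operator norm of $T-\iota$ directly, exploiting that the domains $\Ran G_j$ are mutually orthogonal (so that $\sum_j\|u_j\|^2=\|u\|^2$); this is the one place where the orthogonality of the projectors $G_j$, rather than merely the $\ell_2$-smallness in (iii), is used in an essential way.
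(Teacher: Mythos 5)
The paper does not actually contain a proof of this statement: it is quoted verbatim from Lemma B.2 of \cite{sturm}, so there is no in-paper argument to compare yours against; I can only assess the proposal on its own merits, and it is correct. The three pillars all hold up: (a) $\|A_j-G_j\|<1$ together with $\rank A_j\le\rank G_j$ really does force $\Ran A_j=\operatorname{span}\{A_jg_{j,i}\}_{i}$ (the $r_j$ vectors $x_{j,i}=g_{j,i}+E_jg_{j,i}$ are independent because $u=-E_ju$ with $\|E_j\|<1$ forces $u=0$), which is the only place the rank hypothesis enters and is exactly what confines $\Ran A$ to $\overline{\operatorname{span}}\{x_{j,i}\}$; (b) the operator-norm bound $\|D\|\le\bigl(\sum_j\|A_j-G_j\|^2\bigr)^{1/2}$ is legitimate precisely because the components $u_j=G_ju$ are mutually orthogonal, and you are right that a Hilbert--Schmidt estimate would not survive unbounded $\rank G_j$; (c) the semi-Fredholm stability theorem applies since $\gamma(\iota)=1>\|D\|$, giving $\operatorname{nul}T=0$ and $\operatorname{def}T=m$. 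One simplification worth noting: Fredholm perturbation theory can be avoided entirely. Extend $D$ to $\hat D\in\mathscr B(H)$ by $\hat Dh:=\sum_jE_jG_jh$; the same estimate gives $\|\hat D\|<1$, so $\mathscr I_H+\hat D$ is invertible, and $Tu=(\mathscr I_H+\hat D)u$ for $u\in\mathcal M$. Hence $\Ran T=(\mathscr I_H+\hat D)\mathcal M$ is the image of a closed subspace of codimension $m$ under an invertible operator, therefore closed of codimension exactly $m$, and $\Ran A\subseteq\Ran T$ finishes the argument. Two small points of bookkeeping you should make explicit: $\Ran T$ is closed (as $T$ is bounded below), which is needed to identify it with $\overline{\operatorname{span}}\{x_{j,i}\}$; and the series $Du=\sum_jE_ju_j$ converges because $\sum_j\|E_j\|\,\|u_j\|<\infty$ by Cauchy--Schwarz.
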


\begin{lemma}\label{LemmaPr2}
Let $H$ be a Hilbert space, and let $\{A_j\}_{j=0}^n$ be a set of
self-adjoint operators from the algebra $\mathscr B(H)$ that are
of finite rank for $j\neq0$. If
$$
\sum\limits_{j=0}^nA_j=\mathscr I_H,\quad \sum\limits_{j=1}^n\rank
A_j\le\mathrm{codim}\ \mathrm{Ran}\ A_0,
$$
then $\{A_j\}_{j=0}^n$ is the set of pairwise orthogonal
projectors.
\end{lemma}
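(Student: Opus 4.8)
The plan is to reduce the statement to a finite-dimensional linear-algebra fact by passing to the range of the finite-rank part. First I would set $R:=\sum_{j=1}^n A_j=\mathscr I_H-A_0$. Since each $A_j$ ($j\ge1$) is self-adjoint of finite rank, $R$ is a self-adjoint finite-rank operator, and $V:=\Ran R$ is a finite-dimensional subspace with $H=V\oplus V^\perp$ and $V^\perp=\ker R$. Because $A_0=\mathscr I_H-R$ is the identity minus a finite-rank operator, it is Fredholm with closed range, so $\operatorname{codim}\Ran A_0=\dim(\Ran A_0)^\perp=\dim\ker A_0$; and self-adjointness gives $\ker A_0=\ker(\mathscr I_H-R)$, i.e. the eigenspace $E_1:=\{v:Rv=v\}$ of $R$ at the eigenvalue $1$.

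The heart of the argument is a dimension squeeze that turns the single quantitative hypothesis into rigid structure. On one hand $\Ran R\subseteq\operatorname{span}\bigcup_{j\ge1}\Ran A_j$, so $\rank R\le\sum_{j=1}^n\rank A_j$; on the other hand the hypothesis reads $\sum_{j=1}^n\rank A_j\le\operatorname{codim}\Ran A_0=\dim E_1$. Finally $E_1\subseteq\Ran R=V$, whence $\dim E_1\le\dim V=\rank R$. Chaining these, $\rank R\le\sum\rank A_j\le\dim E_1\le\rank R$, so every inequality is an equality. Two consequences follow: first, $E_1=V$, so $R$ acts as the identity on $V=\Ran R$ and as $0$ on $V^\perp$, i.e. $R$ is the orthogonal projector $P_V$; second, writing $W:=\operatorname{span}\bigcup_{j\ge1}\Ran A_j$, the equalities force $V=\Ran R\subseteq W$ and $\dim W\le\sum\rank A_j=\dim V$, hence $W=V$ and the sum is direct, $V=\bigoplus_{j=1}^n\Ran A_j$. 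In particular $\Ran A_j\subseteq V$ for every $j$, and since each $A_j$ is self-adjoint with $V^\perp\subseteq(\Ran A_j)^\perp=\ker A_j$, each $A_j$ leaves $V$ invariant and vanishes on $V^\perp$.

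It then remains to finish inside the finite-dimensional space $V$, where the restrictions satisfy $\sum_{j=1}^n A_j|_V=\mathscr I_V$ and $V=\bigoplus_{j=1}^n\Ran A_j$. Given $v\in\Ran A_k$ I would decompose $v=\sum_{j=1}^n A_j v$ with $A_j v\in\Ran A_j$; uniqueness of the direct-sum decomposition forces $A_k v=v$ and $A_j v=0$ for $j\ne k$. This yields $A_k^2=A_k$ and $A_jA_k=0$ ($j\ne k$) on $V$, hence on all of $H$ after extending by $0$ on $V^\perp$; together with self-adjointness this makes the $A_j$ ($j\ge1$) pairwise orthogonal projectors. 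Finally $A_0=\mathscr I_H-P_V=P_{V^\perp}$ is an orthogonal projector whose range is orthogonal to each $\Ran A_j\subseteq V$, completing the claim. The main obstacle is the squeeze step: the crux is noticing that the eigenspace $E_1$ of $R$ at $1$ both computes $\operatorname{codim}\Ran A_0$ and sits inside $\Ran R$, which is exactly what collapses all the dimension inequalities and upgrades $R$ from an arbitrary self-adjoint finite-rank operator to the orthogonal projection $P_V$.
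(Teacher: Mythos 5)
Your proof is correct and complete: the squeeze $\rank R\le\sum_{j\ge1}\rank A_j\le\operatorname{codim}\Ran A_0=\dim\ker(\mathscr I_H-R)\le\rank R$ is legitimate (closedness of $\Ran A_0$ follows since $A_0$ is a finite-rank perturbation of the identity, and $\ker A_j\supseteq V^\perp$ follows from self-adjointness), and the finite-dimensional finish via uniqueness of the direct-sum decomposition is sound. Note that the paper itself gives no proof of this lemma --- it is quoted from \cite{sturm} (Lemma B.3) --- so there is no in-paper argument to compare against; your argument stands as a valid self-contained proof.
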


\noindent We use these statements to prove Lemma \ref{Lemma5}.

\begin{prooflemma34}
It follows from Lemma \ref{Lemma3} that the series
$\sum_{j\in\mathbb Z}\hat P_j$ converges in the strong operator
topology, and in view of (\ref{UaqDef}) and (\ref{eq1}) we obtain
that
$$
\sum\limits_{j=-\infty}^\infty\hat P_j=\mathscr I.
$$
Thus, it is enough to show that the operators $\hat P_j$,
$j\in\mathbb Z$ are pairwise orthogonal projectors.

Denote
$$
A_n:=\sum\limits_{\lambda_j\in\Delta_n}\hat P_j.
$$
By virtue of Lemma \ref{Lemma3} we obtain that
$\sum_{n=-\infty}^\infty\|P_{n,0}-A_n\|^2<\infty$, and therefore
there exists an $N_0\in\mathbb N$ such that
$\sum_{|n|>N_0}\|P_{n,0}-A_n\|^2<1$. Moreover, due to the
conditions $\mathrm{(A_1)}$ and $\mathrm{(A_2)}$ we conclude that
$N_0$ can be taken so large that
\begin{equation}\label{eq3}
\sum\limits_{n=-N}^N\sum\limits_{\lambda_j\in\Delta_n}\rank
\alpha_j=(2N+1)r,\qquad N\ge N_0,
\end{equation}
\begin{equation}\label{eq2}
\|\sum\limits_{\lambda_j\in\Delta_n}\alpha_j-I\|<1,\qquad |n|\ge
N_0.
\end{equation}

First let us show that
\begin{equation}\label{eq4}
\sum\limits_{\lambda_j\in\Delta_n}\rank \alpha_j=r,\qquad |n|\ge
N_0.
\end{equation}
Indeed, it follows from (\ref{eq2}) that
$\sum_{\lambda_j\in\Delta_n}\rank \alpha_j\ge r$ and from
(\ref{eq3}) that $ \sum_{\lambda_j\in\Delta_n}\rank
\alpha_j+\sum_{\lambda_j\in\Delta_{-n}}\rank \alpha_j=2r $ for
$|n|\ge N_0$, and thus we obtain (\ref{eq4}).

Fix $N>N_0$ and set
$$
P:=\mathscr I-\sum\limits_{|n|>N}P_{n,0},\quad
A:=\sum\limits_{|n|>N}A_n.
$$
Since $\rank \hat P_j=\rank \alpha_j$ for all $j\in\mathbb Z$
(which follows directly from the definition of $\hat P_j$ and
(\ref{KerRanPhi})), taking into consideration (\ref{eq4}) we
conclude that
$$
\rank A_n\le\sum\limits_{\lambda_j\in\Delta_n}\rank \hat
P_j=\sum\limits_{\lambda_j\in\Delta_n}\rank \alpha_j=r=\rank
P_{n,0},\qquad |n|>N_0.
$$
Recalling also that $\{P_{n,0}\}_{j\in\mathbb Z}$ forms a complete
system of pairwise orthogonal projectors, by virtue of Lemma
\ref{LemmaPr1} we obtain that
$$
\mathrm{codim}\ \Ran A\ge\rank P=(2N+1)r.
$$
Moreover, $ A+\sum_{n=-N}^NA_n=\sum_{j\in\mathbb Z}\hat
P_j=\mathscr I$ and
\begin{eqnarray*}
&\sum\limits_{n=-N}^N\rank
A_n\le\sum\limits_{n=-N}^N\sum\limits_{\lambda_j\in\Delta_n}\rank
\hat P_j
\\
&=\sum\limits_{n=-N}^N\sum\limits_{\lambda_j\in\Delta_n}\rank
\alpha_j=(2N+1)r\le\mathrm{codim}\ \mathrm{Ran}\ A.
\end{eqnarray*}
Therefore, since the operators $A$ and $A_n$, $|n|\le N$, are
self-adjoint, by virtue of Lemma \ref{LemmaPr2} we obtain that the
set
$$
\{\hat P_j\ | \ \lambda_j\in \bigcup\limits_{n=-N}^N \Delta_n\}
$$
is a set of pairwise orthogonal projectors. Since $N$ is
arbitrary, we conclude that projectors $\{\hat P_j\}_{j\in\mathbb
Z}$ are orthogonal ones.
\end{prooflemma34}

In order to prove the sufficiency part of Theorem \ref{Th1} it
obviously suffices to find $q\in\mathfrak Q$ such that $\mathfrak
a=\mathfrak a_q$.

\begin{theorem}\label{Th9}
Let $\mathfrak a=((\lambda_j,\alpha_j))_{j\in\mathbb Z}$ be a
collection satisfying the conditions $(A_1)-(A_3)$,
$\mu:=\mu^{\mathfrak a}$, $H:=H_\mu$, $q:=\Theta(H)$. Then
$\mathfrak a=\mathfrak a_q$.
\end{theorem}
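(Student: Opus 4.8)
The plan is to identify the complete orthogonal system $\{\hat P_j\}_{j\in\mathbb Z}$ supplied by Lemma \ref{Lemma5} with the spectral projectors of $T_q$. Since $\hat P_j=\Phi_q(\lambda_j)\alpha_j\Phi_q^*(\lambda_j)$ while, by Theorem \ref{Th6}$(iv)$, the eigenprojector of $T_q$ at $\lambda_m(q)$ is $P_{m,q}=\Phi_q(\lambda_m(q))\alpha_m(q)\Phi_q^*(\lambda_m(q))$, once I show $\hat P_j=P_{j,q}$ and $\{\lambda_j\}_{j\in\mathbb Z}=\sigma(T_q)$ the injectivity of $\Phi_q(\lambda_j)$ and the surjectivity $\Ran\Phi_q^*(\lambda_j)=\mathbb C^r$ (see (\ref{KerRanPhi})) immediately force $\alpha_j=\alpha_j(q)$, i.e. $\mathfrak a=\mathfrak a_q$. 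Everything therefore reduces to the single claim
\[
\Ran\hat P_j\subseteq\ker(T_q-\lambda_j\mathscr I)\quad\text{for all }j\in\mathbb Z,\qquad\text{equivalently}\qquad s(\lambda_j,q)\alpha_j=0,
\]
because $\Ran\hat P_j=\Phi_q(\lambda_j)\Ran\alpha_j$ and a function $\varphi_q(\cdot,\lambda_j)d$ lies in $\ker(T_q-\lambda_j\mathscr I)$ exactly when it satisfies the right boundary condition $s(\lambda_j,q)d=0$ (see (\ref{KerPhiRel})).

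Proving $s(\lambda_j,q)\alpha_j=0$ is the heart of the matter and the step I expect to be hardest. Here I would use the factorization consequence (\ref{eq1}), $\mathscr U_{\mathfrak a,q}=\mathscr I$, in its Parseval form $\langle u,v\rangle=\sum_{j}\langle\alpha_j\Phi_q^*(\lambda_j)u,\Phi_q^*(\lambda_j)v\rangle$, together with the Lagrange identity behind (\ref{PhiPsiRel}). Integrating $\tfrac{d}{dx}\bigl(\varphi_q(x,\lambda)^*\vartheta g(x)\bigr)$ over $(0,1)$ for $g\in D(T_q)$ and $\lambda\in\mathbb R$, the boundary term at $0$ vanishes by $ag(0)=0$, and the one at $1$ reduces (using $g_1(1)=g_2(1)$ and $s(\lambda,q)=a\varphi_q(1,\lambda)$) to a multiple of $s(\lambda,q)^*g_1(1)$, giving $\Phi_q^*(\lambda)T_qg=\lambda\Phi_q^*(\lambda)g+\tfrac{\sqrt2}{i}\,s(\lambda,q)^*g_1(1)$. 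Substituting this into the Parseval expansions of $\langle T_qg,h\rangle$ and $\langle g,T_qh\rangle$ and invoking the symmetry of $T_q$ (Theorem \ref{Th6}$(i)$), the diagonal terms cancel and only the boundary contributions survive; letting $g,h$ run over $D(T_q)$ (so that $g_1(1)$ exhausts $\mathbb C^r$ while the differences $g-g'$ are dense in $\mathbb H$) this forces $\sum_{j}\Phi_q(\lambda_j)\alpha_j s(\lambda_j,q)^*=0$. Applying the projector $\hat P_{j_0}$ and using the orthogonality relations $\hat P_{j_0}\hat P_j=0$ $(j\neq j_0)$ and $\hat P_{j_0}^2=\hat P_{j_0}$ — which yield $\alpha_{j_0}\Phi_q^*(\lambda_{j_0})\Phi_q(\lambda_j)\alpha_j=0$ for $j\neq j_0$ and $\alpha_{j_0}\Phi_q^*(\lambda_{j_0})\Phi_q(\lambda_{j_0})\alpha_{j_0}=\alpha_{j_0}$ — collapses the sum to $\Phi_q(\lambda_{j_0})\alpha_{j_0}s(\lambda_{j_0},q)^*=0$, whence $\alpha_{j_0}s(\lambda_{j_0},q)^*=0$ by injectivity; taking adjoints and using $\alpha_{j_0}=\alpha_{j_0}^*$ gives $s(\lambda_{j_0},q)\alpha_{j_0}=0$. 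The delicate points are justifying the convergence of $\sum_j\Phi_q(\lambda_j)\alpha_j s(\lambda_j,q)^*$ and the interchange of summation with the inner products, which I would handle with the $\ell_2$-type estimates already used in Lemma \ref{Lemma3}.

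With the claim established, each $\hat P_j$ is a nonzero orthogonal projector onto a subspace of $\ker(T_q-\lambda_j\mathscr I)$, so every $\lambda_j$ is an eigenvalue of $T_q$ and, the $\lambda_j$ being pairwise distinct, the ranges $\Ran\hat P_j$ sit inside pairwise orthogonal eigenspaces $\Ran P_{m(j),q}$ with $j\mapsto m(j)$ injective. Then $\mathscr I=\sum_j\hat P_j\le\sum_m P_{m,q}=\mathscr I$ forces $\hat P_j=P_{m(j),q}$ for every $j$ and surjectivity of $m(\cdot)$, so no eigenvalue of $T_q$ is omitted; hence $\{\lambda_j\}_{j\in\mathbb Z}=\sigma(T_q)$ and, the common ordering normalisation $\lambda_0\le0<\lambda_1$ fixing the indexing, $m(j)=j$. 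Comparing $\hat P_j=P_{j,q}$ through the two factorisations and cancelling $\Phi_q(\lambda_j)$, $\Phi_q^*(\lambda_j)$ gives $\alpha_j=\alpha_j(q)$, and therefore $\mathfrak a=\mathfrak a_q$, as claimed.
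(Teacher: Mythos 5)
Your proposal is correct and reaches the same pivotal relation as the paper --- $\mathrm{Ran}\,\hat P_j\subset\ker(T_q-\lambda_j\mathscr I)$, equivalently $a\varphi_q(1,\lambda_j)\alpha_j=s(\lambda_j,q)\alpha_j=0$, from which the identification $\hat P_j=P_{j,q}$, $\alpha_j=\alpha_j(q)$ follows exactly as you describe --- but you establish that relation by a genuinely different route. The paper also starts from the Lagrange identity and the orthogonality $\hat P_k\hat P_j=0$, obtaining $\alpha_k\varphi_q(1,\lambda_k)^*\vartheta\varphi_q(1,\lambda_j)\alpha_j=0$ for all $k\neq j$; it then sums over $\lambda_k\in\Delta_n$ with the sign $(-1)^n$ and uses the Riemann--Lebesgue lemma together with $(A_1)$ to show that $\sum_{\lambda_k\in\Delta_n}(-1)^n\varphi_q(1,\lambda_k)\alpha_k\to\vartheta a^*$, so that passing to the limit produces the boundary relation. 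You instead test the symmetry of $T_q$ against the Parseval identity coming from $\mathscr U_{\mathfrak a,q}=\mathscr I$ on $D(T_q)$, isolate the boundary contribution $\tfrac{\sqrt2}{i}\,s(\lambda,q)^*g_1(1)$, deduce $\sum_j\Phi_q(\lambda_j)\alpha_j s(\lambda_j,q)^*=0$, and collapse the series with a single projector $\hat P_{j_0}$ using $\alpha_{j_0}\Phi_q^*(\lambda_{j_0})\Phi_q(\lambda_{j_0})\alpha_{j_0}=\alpha_{j_0}$. Your version avoids the $(-1)^n$ clustering-and-limit trick and makes the mechanism more transparent (the boundary form of a symmetric operator must vanish against a complete Parseval system), at the price of the convergence and interchange issues you flag: you need $\sum_j\|\alpha_j s(\lambda_j,q)^*\|^2<\infty$ to legitimize the series and its term-by-term projection. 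This does hold --- by $(A_1)$, the representation (\ref{slambdarepr}) and Bessel's inequality one gets $\sum_j\|s(\lambda_j,q)\|^2<\infty$ --- so the gap is fillable with tools already in the paper, and the remaining reductions (via (\ref{KerPhiRel}), (\ref{KerRanPhi}), Lemma \ref{Lemma5} and the completeness argument) coincide with the paper's.
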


\begin{proof}
It is enough to prove the relation
\begin{equation}\label{PrRel}
\mathrm{Ran}\ \hat P_j\subset\ker(T_q-\lambda_j\mathscr I),\quad
j\in\mathbb Z.
\end{equation}
Indeed, taking into account the completeness of system $\{\hat
P_j\}_{j\in\mathbb Z}$, from (\ref{PrRel}) we immediately conclude
that $\lambda_j(q)=\lambda_j$ for all $j\in\mathbb Z$, where
$\lambda_j(q)$ are the eigenvalues of $T_q$. From this equality
and (\ref{PrRel}) we obtain the relation $P_{j,q}-\hat P_j\ge0$,
$j\in\mathbb Z$, where $P_{j,q}$ are corresponding orthogonal
projectors of $T_q$. However, by virtue of completeness of the
systems $\{\hat P_j\}_{j\in\mathbb Z}$ and
$\{P_{j,q}\}_{j\in\mathbb Z}$ we conclude that $ \sum_{j\in\mathbb
Z}(P_{j,q}-\hat P_j)=0$, and therefore $P_{j,q}-\hat P_j=0$ for
all $j\in\mathbb Z$. Therefore, taking into account Lemma
\ref{Lemma5} and the definition of $\hat P_j$, we conclude that
$$
\Phi_q(\lambda_j)\{\alpha_j(q)-\alpha_j\}\Phi_q^*(\lambda_j)=0,
\quad j\in\mathbb Z,
$$
and by virtue of (\ref{KerRanPhi}) we justify that
$\alpha_j(q)=\alpha_j$, which, together with
$\lambda_j(q)=\lambda_j$, means that $\mathfrak a=\mathfrak
a_{q}$.

Thus it only remains to prove (\ref{PrRel}). Due to the definition
of $\Phi_q(\lambda)$ and (\ref{KerRanPhi}) we obtain that $
\mathrm{Ran}\ \hat P_j=\{\varphi_{q}(\cdot,\lambda_j)\alpha_jc\ |
\ c\in\mathbb C^r\}$. From the other side, by virtue of Lemma
\ref{Lemma1} we obtain that
$$
\ker (T_q-\lambda_j \mathscr I)=\{ \varphi_q(\cdot,\lambda_j)c \ |
\ a\varphi_q(1,\lambda_j)c=0 \}.
$$
Therefore we conclude that it is enough to show that
\begin{equation}\label{FinalRel}
a\varphi_q(1,\lambda_j)\alpha_j=0.
\end{equation}
Let $j,k\in\mathbb Z$ and $c,d\in\mathbb C^r$. Then, taking into
account that $\bq=\bq^*$, $\vartheta^*=-\vartheta$ and integrating
by parts, we obtain that
$$
\lambda_j(\Phi_q(\lambda_j)c\ | \
\Phi_q(\lambda_k)d)=(\vartheta\varphi_q(1,\lambda_j)c\ | \
\varphi_q(1,\lambda_k)d)+\lambda_k(\Phi_q(\lambda_j)c\ | \
\Phi_q(\lambda_k)d),
$$
\begin{equation}\label{Rel1}
\lambda_j\Phi_q(\lambda_k)^*\Phi_q(\lambda_j)-
\lambda_k\Phi_q(\lambda_k)^*\Phi_q(\lambda_j)=\varphi_q(1,\lambda_k)^*\vartheta\varphi_q(1,\lambda_j).
\end{equation}
Since $\hat P_k\hat P_j=0$ if $k\neq j$, we obtain that $
\Phi_q(\lambda_k)\alpha_k\Phi_q^*(\lambda_k)\Phi_q(\lambda_j)\alpha_j\Phi_q^*(\lambda_j)=0$,
and by virtue of (\ref{KerRanPhi}) we conclude that $
\alpha_k\Phi_q^*(\lambda_k)\Phi_q(\lambda_j)\alpha_j=0$.
Multiplying now (\ref{Rel1}) by $\alpha_k$ from the left and by
$\alpha_j$ from the right we obtain that
$$
\alpha_k\varphi_q(1,\lambda_k)^*\vartheta\varphi_q(1,\lambda_j)\alpha_j=0,
$$
and therefore we can write
\begin{equation}\label{Rel2}
\left\{\sum\limits_{\lambda_k\in\Delta_n}(-1)^n\alpha_k\varphi_q(1,\lambda_k)^*\right\}\vartheta
\varphi_q(1,\lambda_j)\alpha_j=0, \qquad \lambda_j\notin\Delta_n.
\end{equation}
Taking into account (\ref{varphiRepr}), it follows from the
Riemann--Lebesgue lemma and the asymptotic behavior of the
sequences $(\lambda_k)$ and $(\alpha_k)$ that
$$
\lim\limits_{n\to\infty}\left\{\sum\limits_{\lambda_k\in\Delta_n}(-1)^n\varphi_q(1,\lambda_k)\alpha_k\right\}=\vartheta
a^*,
$$
and passing to the limit in (\ref{Rel2}) we obtain the relation
(\ref{FinalRel}).
\end{proof}

\subsection{Potential reconstruction: proof of Theorems \ref{Th2} and \ref{Th4}}
Finally, we prove Theorems \ref{Th2} and \ref{Th4}.

\begin{proofth12}
Suppose that $q_1,q_2\in\mathfrak Q$, and let $\mathfrak
a_{q_1}=\mathfrak a_{q_2}$. Let us show that $q_1=q_2$. Write
$\mathfrak a_{q_1}=\mathfrak a_{q_2}=:\mathfrak a$ for short, and
set $\mu:=\mu^{\mathfrak a}$, $H:=H_\mu$. Then by virtue of
Theorem \ref{Th8} the operator $\mathscr U_{\mathfrak
a,0}=\mathscr I+\mathscr F_H$ admits a factorization, and we can
write
$$
\mathscr U_{\mathfrak a,0}=(\mathscr I+\mathscr
K_{q_1})^{-1}(\mathscr I+\mathscr K_{q_1}^*)^{-1}=(\mathscr
I+\mathscr K_{q_2})^{-1}(\mathscr I+\mathscr K_{q_2}^*)^{-1}.
$$
Since any operator may admit at most one factorization of the
above form (see \ref{add.2}), we conclude that
$$
\mathscr K_{q_1}=\mathscr K_{q_2}.
$$

It is left to notice that $\mathscr K_{q_1}=\mathscr K_{q_2}\
\Rightarrow\ q_1=q_2$. Taking into account (\ref{varphiRepr}) we
conclude that $\mathscr K_{q_1}=\mathscr K_{q_2}\ \Rightarrow\
\varphi_{q_1}(\cdot,0)=\varphi_{q_2}(\cdot,0)=:\varphi$, and
therefore we obtain
$$
\vartheta\varphi'+\bq_1\varphi=\vartheta\varphi'+\bq_2\varphi=0,
$$
and thus $\{\bq_1-\bq_2\}\varphi=0$.

Thus it is left to show that for all $x$ the matrix $\varphi(x)$
is invertible. Assume the contrary. Then there exist $x_0\in(0,1]$
and $c\in\mathbb C^r\setminus\{0\}$ such that $\varphi(x_0)c=0$,
and therefore the function $f=\varphi_{q_1}(\cdot,0)c$ is a
non-zero solution of the Cauchy problem $\vartheta f'+\bq_1f=0$,
$f(x_0)=0$. But this is in contradiction with the uniqueness
theorem; thus $\varphi(x)$ is non-singular, and $\bq_1=\bq_2$.

Besides this, by definition of the spectral data we obviously have
$q_1=q_2\ \Rightarrow\ \mathfrak a_{q_1}=\mathfrak a_{q_2}$, and
therefore we conclude that the mapping $\mathfrak Q\owns
q\mapsto\mathfrak a_q\in\mathfrak A$ is bijective.
\end{proofth12}

\begin{proofth14} Theorem \ref{Th4} now directly follows from Theorems
\ref{Th2} and \ref{Th9}.
\end{proofth14}

\section*{Acknowledgements}
The authors are grateful to Rostyslav Hryniv for
helpful discussions and valuable suggestions in preparing this
manuscript.

\appendix

\section{Spaces}\label{add.1}

By $G_2(M_r)$ we denote the set of all measurable functions
$K:[0,1]^2\to M_r$, such that for all $x$ and $t$ in $[0,1]$ the
functions $K(x,\cdot)$ and $K(\cdot,t)$ belong to $L_2((0,1),M_r)$
and, moreover, the mappings
$$
[0,1]\ni x\mapsto K(x,\cdot)\in L_2((0,1),M_r), \quad [0,1]\ni
t\mapsto K(\cdot,t)\in L_2((0,1),M_r)
$$
are continuous on the interval $[0,1]$. It can easily be seen that
$G_2(M_r)\subset L_2([0,1]^2,M_r)$. The set $G_2(M_r)$ becomes a
Banach space upon introducing the norm
$$
\|K\|_{G_2(M_r)}=\max\left\{
\max\limits_{x\in[0,1]}\|K(x,\cdot)\|_{L_2((0,1),M_r)},
\max\limits_{t\in[0,1]}\|K(\cdot,t)\|_{L_2((0,1),M_r)}\right\}.
$$
By $\mathscr G_2(M_r)$ we denote the space of all integral
operators with kernels $K\in G_2(M_r)$. It forms a subalgebra in the
algebra $\mathscr B_\infty$ of compact operators in
$L_2((0,1),\mathbb C^r)$.

We denote
$$
\Omega^+:=\{(x,t) \ | \ 0\le t\le x\le 1\}, \quad \Omega^-:=\{(x,t) \ |
\ 0\le x<t\le 1\}.
$$
We write $G_2^+(M_r)$ for the set of all functions $K\in G_2(M_r)$
such that $K(x,t)=0$ a.e. in $\Omega^-$, and $G_2^-(M_r)$ for set
of all $K\in G_2(M_r)$ such that $K(x,t)=0$ a.e. in $\Omega^+$. By
$\mathscr G_2^\pm(M_r)$ we denote the subalgebra of $\mathscr
G_2(M_r)$ consisting of all operators with kernels $K\in
G_2^\pm(M_r)$.

\section{Factorization of operators}\label{add.2}
\def\thesection{\Alph{section}}

Here we state some well-known facts from the theory of
factorization. In particular, these facts are mentioned in
\cite{sturm}, \cite{MykDirac}. See also \cite{kreinvolterra} for
details.

We say that an operator $\mathscr I+\mathscr F$, $\mathscr F\in
\mathscr G_2(M_r)$ admits a factorization (in $\mathscr G_2(M_r)$)
if there exist $\mathscr L^+\in \mathscr G_2^+(M_r)$ and $\mathscr
L^-\in \mathscr G_2^-(M_r)$ such that
$$
\mathscr I+\mathscr F=(\mathscr I+\mathscr L^+)^{-1}(\mathscr
I+\mathscr L^-)^{-1}.
$$
It is known that if $\mathscr I+\mathscr F$ admits a
factorization, then the corresponding operators $\mathscr L^+$ and
$\mathscr L^-$ are unique. Moreover, the set of operators
$\mathscr F\in\mathscr G_2(M_r)$, such that $\mathscr I+\mathscr
F$ admits a factorization, is open, and the mappings $\mathscr
F\mapsto\mathscr L^\pm\in\mathscr G_2(M_r)$ are continuous.

An operator $\mathscr I+\mathscr F$, $\mathscr F\in \mathscr G_2(M_r)$
admits a factorization if and only if the operators $I+\chi_a\mathscr
F\chi_a$ have trivial kernels for all $a\in[0,1]$. Here $\chi_a$ is an
operator of multiplication by the indicator of the interval $(0,a]$,
i.e.
$$
(\chi_af)(x)=\begin{cases} f(x), & x\in(0,a], \\ 0, & x\in(a,1),
\end{cases}.
$$
If $\mathscr F$ is self-adjoint, then this condition is equivalent to
the positivity of $\mathscr I+\mathscr F$.

From the other side, it is known that $\mathscr I+\mathscr F$ admits a
factorization in $\mathscr G_2(M_r)$ if and only if the equation
\begin{equation}\label{GLMFact}
X(x,t)+F(x,t)+\int\limits_0^xX(x,s)F(s,t)ds=0, \quad
(x,t)\in\Omega^+,
\end{equation}
where $F$ is the kernel of $\mathscr F$, is solvable in
$L_2(\Omega^+,M_r)$. In this case its solution is unique and
belongs to $G_2^+(M_r)$. Eq. (\ref{GLMFact}) is usually
called \emph{the Gelfand--Levitan--Marchenko (GLM) equation}.

Also we formulate the following lemma (see Lemma A.3 in
\cite{sturm}).

\begin{lemma}\label{LemmaIntEq}
Let $F\in L_2((0,1)^2,M_r)$. Then the GLM equation (\ref{GLMFact})
has at most one solution; if (\ref{GLMFact}) is solvable, then the
equation
$$
X(x,t)+\int\limits_0^xX(x,s)F(s,t)ds=0,\quad (x,t)\in\Omega^+
$$
has only trivial solution in $L_2(\Omega^+,M_r)$.
\end{lemma}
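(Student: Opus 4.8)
The two assertions are two faces of a single injectivity statement, so the plan is first to unify them and then to reduce matters to the invertibility of the truncated operators $\mathscr I+\chi_a\mathscr F\chi_a$ discussed in Appendix \ref{add.2}. Write $\mathscr F$ for the integral operator on $L_2((0,1),\mathbb C^r)$ with kernel $F$, and set $(\mathscr A X)(x,t):=\int_0^x X(x,s)F(s,t)\,ds$ on $L_2(\Omega^+,M_r)$, so that (\ref{GLMFact}) reads $(\mathscr I+\mathscr A)X=-F|_{\Omega^+}$ and the homogeneous equation reads $(\mathscr I+\mathscr A)X=0$. The difference of two solutions of (\ref{GLMFact}) solves the homogeneous equation, and, conversely, adding any homogeneous solution to a fixed particular solution produces another solution of (\ref{GLMFact}); hence both assertions reduce to the single claim that if (\ref{GLMFact}) is solvable then $\mathscr I+\mathscr A$ is injective. (When (\ref{GLMFact}) is not solvable the first assertion holds vacuously.)

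Next I would decouple the triangle into horizontal slices, since for fixed $x=a$ the value $(\mathscr A X)(a,\cdot)$ depends only on $X(a,\cdot)$. For a homogeneous solution $X$, taking the matrix adjoint of the slice equation at $x=a$ shows that $Z:=X(a,\cdot)^*$ satisfies $(\mathscr I+\chi_a\mathscr F^*\chi_a)Z=0$, where $\chi_a$ is the operator of Appendix \ref{add.2} and $\chi_a\mathscr F^*\chi_a$ is Hilbert--Schmidt. Consequently a nonzero homogeneous solution exists if and only if $\ker(\mathscr I+\chi_a\mathscr F^*\chi_a)\neq\{0\}$ for $a$ in a set of positive measure. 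The nontrivial direction here is to manufacture a genuine $L_2(\Omega^+,M_r)$ solution from slicewise kernel vectors, which I would do by a measurable selection of unit vectors in the kernels — available because $a\mapsto\chi_a\mathscr F^*\chi_a$ is norm--continuous — together with the finiteness of the interval. Thus injectivity of $\mathscr I+\mathscr A$ is equivalent to $\ker(\mathscr I+\chi_a\mathscr F^*\chi_a)=\{0\}$ for almost every $a\in[0,1]$.

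It then remains to show that solvability of (\ref{GLMFact}) forces $\mathscr I+\chi_a\mathscr F^*\chi_a$ to be injective for every $a$. Here I would invoke the factorization facts recorded in Appendix \ref{add.2}: solvability of the GLM equation is equivalent to the invertibility of $\mathscr I+\chi_a\mathscr F\chi_a$ for all $a\in[0,1]$. Since $\chi_a$ is an orthogonal projector, $(\chi_a\mathscr F\chi_a)^*=\chi_a\mathscr F^*\chi_a$, and because an operator of the form $\mathscr I+(\text{compact})$ is injective exactly when it is bijective, $\mathscr I+\chi_a\mathscr F\chi_a$ is invertible if and only if its adjoint $\mathscr I+\chi_a\mathscr F^*\chi_a$ is invertible. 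In particular the latter has trivial kernel for every $a$, and combining this with the previous paragraph gives injectivity of $\mathscr I+\mathscr A$, hence uniqueness, hence both assertions.

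The main obstacle is that the factorization theory quoted in Appendix \ref{add.2} is phrased for operators in $\mathscr G_2(M_r)$, i.e.\ for kernels in $G_2(M_r)$, whereas here $F$ is only assumed to lie in $L_2((0,1)^2,M_r)$. The substantive step is therefore to re-establish the equivalence ``GLM solvable $\Longleftrightarrow$ all truncations invertible'' at the level of Hilbert--Schmidt operators with merely $L_2$ kernels. I would do this by the Gohberg--Krein triangular--factorization argument: from a solution $X\in L_2(\Omega^+,M_r)$ one forms the lower--triangular Hilbert--Schmidt operator $\mathscr X$ with kernel $X$, checks that $(\mathscr I+\mathscr X)(\mathscr I+\mathscr F)=\mathscr I+\mathscr Y$ with $\mathscr Y$ upper--triangular, and then factors each truncation into invertible triangular pieces, using that the inverse of an identity--plus--triangular Hilbert--Schmidt operator is again of that form and interacts correctly with $\chi_a$. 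Verifying that triangularity and these truncation identities persist for $L_2$ kernels is the delicate point; this is precisely the content of the cited Lemma A.3 of \cite{sturm}, and it is where essentially all the work lies.
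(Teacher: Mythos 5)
The paper gives no proof of this lemma at all --- it is quoted from \cite{sturm} (Lemma A.3) --- so there is no internal argument to compare against; judged on its own, your outline is the standard Gokhberg--Krein argument and is essentially correct. The reduction of both assertions to injectivity of $\mathscr I+\mathscr A$, the slicing at $x=a$ and passage (after taking adjoints) to $\ker(\mathscr I+\chi_a\mathscr F^*\chi_a)$, and the derivation of invertibility of every truncation $\mathscr I+\chi_a\mathscr F\chi_a$ from a solution of (\ref{GLMFact}) via the triangular factorization $(\mathscr I+\mathscr X)(\mathscr I+\mathscr F)=\mathscr I+\mathscr Y$ are all sound. Two remarks. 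First, the measurable-selection construction in your second paragraph is superfluous: the lemma only needs the easy direction, namely that a nonzero homogeneous solution produces a positive-measure set of $a$ with $\ker(\mathscr I+\chi_a\mathscr F^*\chi_a)\neq\{0\}$, so the claimed equivalence (and the selection machinery behind it) can be dropped. Second, your closing sentence defers ``essentially all the work'' back to the cited Lemma A.3, which is circular as written; in fact the truncation identities $\chi_a\mathscr X=\chi_a\mathscr X\chi_a$ and $\mathscr Y\chi_a=\chi_a\mathscr Y\chi_a$ are elementary kernel computations valid for plain $L_2$ kernels, and the only genuinely nontrivial input is the classical theorem of \cite{kreinvolterra} that a Hilbert--Schmidt operator with kernel supported in $\Omega^+$ (or in $\Omega^-$) is Volterra, so that $\mathscr I+\chi_a\mathscr X\chi_a$ and $\mathscr I+\chi_a\mathscr Y\chi_a$ are invertible. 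That theorem requires only $L_2$ kernels, not $G_2$ ones, so the obstacle you flag in your last paragraph is not actually present; citing it explicitly turns your sketch into a complete proof.
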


\end{document}